\def\mathcal{\mathscr}
\newtheorem{thm}{Theorem}[section]
\newtheorem{lem}[thm]{Lemma}
\newtheorem{cor}[thm]{Corollary}
\newtheorem{prop}[thm]{Proposition}
\theoremstyle{definition}
\newtheorem{rem}[thm]{Remark}
\newtheorem{que}[thm]{Question}
\newcommand{\mca}[1]{{\mathcal{#1}}}
\def\Q{{\mathbb Q}}
\def\Z{{\mathbb Z}}
\def\C{{\mathbb C}}
\def\R{{\mathbb R}}
\def\diam{\text{\rm diam}}
\def\ep{\varepsilon} 
\def\ech{\text{\rm ECH}}
\def\emb{\text{\rm emb}}
\def\inj{\text{\rm inj}\,}
\def\ph{\varphi}
\def\supp{\text{\rm supp}\,}
\def\vol{\text{\rm vol}}
\begin{document}
\pagestyle{plain}
\thispagestyle{plain}

\title[Equidistributed periodic Reeb orbits]
{Equidistributed periodic orbits of $C^\infty$-generic three-dimensional Reeb flows}

\author[Kei Irie]{Kei Irie}
\address{Graduate School of Mathematical Sciences, The University of Tokyo, 3-8-1 Komaba, Meguro-Ku, Tokyo 153-8914, Japan}
\email{iriek@ms.u-tokyo.ac.jp} 
\subjclass[2010]{70H12, 53D42} 
\date{\today}

\begin{abstract} 
We prove that, for a $C^\infty$-generic contact form $\lambda$ adapted to a given contact distribution on a closed three-manifold,
there exists a sequence of periodic Reeb orbits which is equidistributed with respect to $d\lambda$. 
This is a quantitative refinement of the $C^\infty$-generic density theorem for three-dimensional Reeb flows, which was previously proved by the author. 
The proof is based on the volume theorem in embedded contact homology (ECH) by Cristofaro-Gardiner, Hutchings, Ramos, 
and inspired by the argument of Marques-Neves-Song, who proved a similar equidistribution result for minimal hypersurfaces. 
We also discuss a question about generic behavior of periodic Reeb orbits ``representing'' ECH homology classes, 
and give a partial affirmative answer to a toy model version of this question which concerns boundaries of star-shaped toric domains. 
\end{abstract}

\maketitle

\section{Introduction} 

\subsection{Setting} 

Let $Y$ be a closed $C^\infty$-manifold of dimension $3$, 
and $\xi$ be a contact distribution on $Y$. 
Namely, $\xi$ is an oriented plane field on $Y$, 
such that there exists a $1$-form $\lambda$ on $Y$ 
satisfying 
\begin{equation}\label{180421_3} 
\ker \lambda = \xi, \qquad 
d\lambda|_{\xi}>0.
\end{equation} 
$Y$ is oriented so that $\lambda \wedge d \lambda$ is positive, 
and we denote
$\vol(Y, \lambda):= \int_Y \lambda \wedge d\lambda$. 
For each positive integer $l$, 
let $\Lambda_{C^l}(Y, \xi)$ denote the set of $C^l$-class $1$-forms satisfying (\ref{180421_3}). 
We abbreviate $\Lambda_{C^\infty}(Y,\xi)$ as
$\Lambda(Y, \xi)$. 

\textbf{Metric and topology on $\Lambda(Y, \xi)$} 

To define a metric and topology on $\Lambda(Y, \xi)$, 
we fix an auxiliary Riemannian metric on $Y$, 
and define
$\| \cdot \|_{C^\infty} : C^\infty(Y, \R) \to \R_{\ge 0}$ by 
\[ 
\| f\|_{C^\infty}  := \sum_{k=0}^\infty 2^{-k} \frac{ \| \nabla^k f\|_{C^0}}{ 1 + \| \nabla^k f\|_{C^0}}. 
\] 
For any $\lambda, \lambda' \in \Lambda(Y, \xi)$, 
one can define $f \in C^\infty(Y, \R)$ 
by $\lambda' = e^f \lambda$.
Then we define a metric $d_{C^\infty}$ on $\Lambda(Y, \xi)$ by 
\[ 
d_{C^\infty}(\lambda, \lambda') := \| f \|_{C^\infty}. 
\] 
This metric induces the usual $C^\infty$-topology on $\Lambda(Y, \xi)$.

\textbf{Reeb orbits and currents} 

For each $\lambda \in \Lambda(Y, \xi)$, 
the \textit{Reeb vector field} $R_\lambda \in \mca{X}(Y)$ is defined by equations 
\[ 
d\lambda(R_\lambda, \,  \cdot \,) = 0, \qquad \lambda(R_\lambda) = 1. 
\] 
Then we define 
\begin{align*} 
\mca{P}(Y, \lambda)&:= \{ \gamma: \R/T_\gamma \Z \to Y \mid T_\gamma \in \R_{>0}, \, \dot{\gamma} = R_\lambda(\gamma) \},  \\
\mca{P}_\inj(Y, \lambda)&:= \{ \gamma \in \mca{P}(Y, \lambda) \,\, \text{which is injective} \}, \\ 
\mca{P}_\emb(Y, \lambda)&:= \{ \mathrm{Im}(\gamma) \mid \gamma \in \mca{P}(Y, \lambda)\}. 
\end{align*} 
For each $\gamma \in \mca{P}(Y, \lambda)$, 
let $\rho(\gamma, \lambda) \in \mathrm{Aut}(\xi_{\gamma(0)})$ 
denote the linearized return map along $\gamma$ 
of the flow generated by $R_\lambda$. 
$\gamma$ is called \textit{nondegenerate} if $1$ is not an eigenvalue of $\rho(\gamma, \lambda)$. 
$\lambda$ is called \textit{nondegenerate} if all elements of $\mca{P}(Y, \lambda)$ (including multiple orbits) are nondegenerate.
Each $\gamma \in \mca{P}_{\emb}(Y, \lambda)$ is oriented so that $R_\lambda$ is of positive direction.
$T_\gamma:= \int_\gamma \lambda$ is called the \textit{period} of $\gamma$. 

A \textit{positive Reeb current} of $(Y, \lambda)$  is a $1$-dimensional current $C$ on $Y$ of the form 
\[ 
C = \sum_{1 \le i \le k} a_i \gamma_i 
\] 
where $a_1, \ldots, a_k$ are positive real numbers, 
and $\gamma_1, \ldots, \gamma_k$ are distinct elements of $\mca{P}_\emb(Y, \lambda)$. 
In other words, 
\[ 
C(\alpha) = \sum_{1 \le i \le k} a_i \int_{\gamma_i} \alpha \qquad(\forall \alpha \in \Omega^1(Y))
\] 
where $\Omega^1(Y)$ denotes the space of all $C^\infty$-class $1$-forms on $Y$. 
Note that $a_1, \ldots, a_k, \gamma_1, \ldots, \gamma_k$ are uniquely determined from $C$, 
up to permutations. 
$C$ is called \textit{nondegenerate} if each $\gamma_i \,(1 \le i \le k)$ is nondegenerate 
(strictly speaking, it means that $\gamma_i$ is an image of a nondegenrate element of $\mca{P}_\inj(Y, \lambda)$). 

Let $\mca{C}(Y, \lambda)$ denote the set of all positive Reeb currents of $(Y, \lambda)$. 
We also define 
\[
\mca{C}_\Z(Y, \lambda) := \{ \sum_{1 \le i \le k}  a_i \gamma_i  \in \mca{C}(Y, \lambda) \mid a_1, \ldots, a_k \in \Z_{>0}  \}. 
\] 

\subsection{Main Result} 

Let us state the main result. 
Let $Y$ be a closed three-manifold, 
$\xi$ be a contact distribution on $Y$, 
and $\Lambda(Y, \xi)$ be the set of 
$C^\infty$-class $1$-forms satisfying (\ref{180421_3}), 
equipped with the $C^\infty$-topology. 

On any topological space $X$, 
we say that a certain property holds for 
\textit{generic} $x \in X$
if the set of all $x \in X$ satisfying this property is residual, 
i.e. it contains an intersection of countably many open and dense sets. 

\begin{thm}\label{180110_1} 
For generic $\lambda \in \Lambda(Y, \xi)$, 
there exists a sequence $(C_k)_{k \ge 1}$ in 
$\mca{C}(Y, \lambda)$ which weakly converges to $d\lambda$, namely 
\begin{equation}\label{180110_3} 
\lim_{k \to \infty} C_k(\alpha) = \int_Y \alpha \wedge d\lambda \qquad ( \forall \alpha \in \Omega^1(Y)). 
\end{equation} 
\end{thm}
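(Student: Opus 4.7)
The plan is to combine the ECH volume theorem of Cristofaro-Gardiner--Hutchings--Ramos with a differentiation argument in the spirit of Marques--Neves--Song. I use the sequence of ECH spectral invariants $\{c_k\}_{k \ge 1}$ together with the following standard properties: (i) $c_k : \Lambda(Y,\xi) \to \R_{>0}$ is continuous, $1$-homogeneous, and monotone in the sense that $c_k(f\lambda) \ge c_k(\lambda)$ whenever $f \ge 1$; (ii) for nondegenerate $\lambda$, there is a realizer $C_k = C_k(\lambda) \in \mca{C}_\Z(Y,\lambda)$ with $c_k(\lambda) = C_k(\lambda)$; and (iii) the asymptotic formula $\lim_k c_k(\lambda)^2/k = 2\vol(Y,\lambda)$. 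Since nondegenerate contact forms are themselves $C^\infty$-generic, it suffices to work with nondegenerate $\lambda$.

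The variational core goes as follows. Fix nondegenerate $\lambda$ and $g \in C^\infty(Y,\R)$, and set $\lambda_t := e^{tg}\lambda$. A direct computation gives $\vol(Y,\lambda_t) = \int_Y e^{2tg}\lambda \wedge d\lambda$, so
\[
\frac{d}{dt}\bigg|_{t=0}\vol(Y,\lambda_t) = 2\int_Y g\lambda \wedge d\lambda.
\]
By (i), $t \mapsto c_k(\lambda_t)$ is Lipschitz, and sandwiching $c_k(\lambda_{t_0 + s})$ above and below by the actions of realizers at $\lambda_{t_0}$ and $\lambda_{t_0 + s}$ (both persist under small perturbations by nondegeneracy) yields, at any differentiability point with a unique realizer,
\[
\frac{d}{dt}c_k(\lambda_t) = C_k(g\lambda_t).
\]
Granting the right to take this derivative through $\lim_k$ in (iii), we get at $t=0$ that $2c_k(\lambda)C_k(g\lambda)/k \to 4\int_Y g\lambda \wedge d\lambda$, and using $c_k(\lambda) \sim \sqrt{2k\vol(Y,\lambda)}$ this becomes
\[
\sqrt{\frac{\vol(Y,\lambda)}{2k}}\, C_k(g\lambda) \longrightarrow \int_Y g\lambda \wedge d\lambda.
\]
Setting $\tilde C_k := \sqrt{\vol(Y,\lambda)/(2k)}\, C_k \in \mca{C}(Y,\lambda)$ gives weak convergence against $1$-forms of type $g\lambda$. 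To extend to an arbitrary $\alpha \in \Omega^1(Y)$, decompose $\alpha = g\lambda + \beta$ with $g := \alpha(R_\lambda)$ and $\beta(R_\lambda) = 0$; then $\tilde C_k(\beta) = 0$ because every orbit is tangent to $R_\lambda$, and $\beta \wedge d\lambda = 0$ since it is a $3$-form on $Y$ annihilated by $\iota_{R_\lambda}$. Hence $\tilde C_k \to d\lambda$ weakly in full generality.

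The main difficulty I anticipate is justifying the interchange of $\lim_k$ and $\frac{d}{dt}$, which can fail at a general $\lambda$ but should hold on a residual set. I would package this as a Baire argument: fix a countable dense $\{g_j\}_{j \ge 1} \subset C^\infty(Y,\R)$, and for each $j$ study the Lipschitz functions $\psi_{k,j}(t) := c_k(e^{tg_j}\lambda_0)^2/(2k)$ converging pointwise to the smooth function $\Psi_j(t) := \vol(Y, e^{tg_j}\lambda_0)$. A monotone-convergence or subgradient analysis, exploiting (i) and the positivity of $C_k$, should show that the set of $\lambda$ along which the derivative interchange is valid in direction $g_j$ is residual; intersecting countably many such conditions yields the desired residual subset of $\Lambda(Y,\xi)$. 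A further subtlety is ensuring uniqueness of the realizer $C_k$, which is the analogue of the ``bumpy metric'' genericity used by Marques--Neves--Song and should follow from a standard perturbation breaking accidental action coincidences between distinct ECH orbit sets.
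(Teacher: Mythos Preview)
Your proposal has a genuine gap precisely where you flag the ``main difficulty'': the interchange of $\lim_k$ and $\tfrac{d}{dt}$. Uniform convergence of uniformly Lipschitz functions does \emph{not} imply convergence of their derivatives, even at generic points---think of $t \mapsto k^{-1}\sin(kt)$. Your suggested ``monotone-convergence or subgradient analysis'' does not supply a mechanism: monotonicity of $c_k$ in the contact form gives no monotonicity of $t \mapsto c_k(e^{tg}\lambda)$, and positivity of $C_k$ yields nothing beyond the Lipschitz bound already in hand. In fact, what you are attempting---that for generic $\lambda$ the normalized realizers $\sqrt{\vol(Y,\lambda)/(2k)}\,C_k(\lambda)$ themselves converge weakly to $d\lambda$---is exactly the content of Question~1.7 in the paper, which is posed there as open. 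So the residual-set argument you outline cannot succeed without a substantial new idea.

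The paper proves the weaker Theorem~1.1 by a route that \emph{avoids} the derivative interchange altogether. Rather than a single one-parameter family, it places $\lambda$ in an $N$-parameter family $(\lambda_\tau)_{\tau \in [0,1]^N}$ deforming in $N$ test directions at once, and uses that $\bar\gamma_k(\tau):= c_{\sigma_k}(\lambda_\tau)/\sqrt{k} - \sqrt{2\vol(\lambda_\tau)}$ tends to zero \emph{uniformly}. The key input is Lemma~3 of Marques--Neves--Song: a Lipschitz function on $[0,1]^N$ with small oscillation admits $N{+}1$ nearby points whose gradients have a convex combination of small norm. Applied to $\bar\gamma_k$ for large $k$, this produces parameters $\tau^{1,m},\ldots,\tau^{N+1,m} \to \tau^\infty$ and weights $a_j$ such that the \emph{convex combination} $\sum_j a_j C_{j,m}$ of realizers (taken at slightly different contact forms, then passed to the limit $\lambda_{\tau^\infty}$) is $\varepsilon$-close on the $N$ chosen test functions. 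Thus the equidistributing currents the paper constructs are not the realizers at $\lambda$ but convex combinations of realizers at nearby contact forms---this is why the argument proves existence of \emph{some} equidistributing sequence without settling whether the $C_k$ themselves equidistribute.
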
 

\begin{rem}\label{180110_2}
(\ref{180110_3}) is equivalent to 
\begin{equation}\label{181017_1}
\lim_{k \to \infty} C_k (f\lambda ) = \int_Y f \lambda \wedge d\lambda \qquad(\forall f \in C^\infty(Y, \R)). 
\end{equation} 
Indeed, any $\alpha \in \Omega^1(Y)$ can be written as 
$\alpha = \beta + f\lambda$ by some $f \in C^\infty(Y, \R)$ so that 
$\beta(R_\lambda) \equiv 0$, which implies $C(\beta)=0$ for any positive Reeb current $C$, 
and $\beta \wedge d \lambda \equiv 0$. 
Then, (\ref{181017_1}) implies 
\[ 
\lim_{k \to \infty} C_k(\alpha) = \lim_{k \to \infty} C_k(f\lambda) = \int_Y f\lambda \wedge d \lambda = \int_Y \alpha \wedge d\lambda. 
\] 
\end{rem} 

Theorem \ref{180110_1} is a quantitative refinement of 
the $C^\infty$-generic density theorem for three-dimensional Reeb flows,
which was previously proved in \cite{Irie}: 

\begin{cor}[\cite{Irie}]
For generic $\lambda \in \Lambda(Y, \xi)$, 
the union of periodic orbits of $R_\lambda$ is dense in $Y$. 
\end{cor}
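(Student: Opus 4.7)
The plan is to deduce the density statement directly from Theorem \ref{180110_1} by a one-line test-function argument, using Remark \ref{180110_2}. The entire content of the corollary is absorbed into the equidistribution theorem; what remains is a standard consequence of weak convergence against a bump function.

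First, I would take a generic $\lambda \in \Lambda(Y,\xi)$ for which Theorem \ref{180110_1} applies, together with an associated sequence $(C_k)_{k \ge 1}$ in $\mca{C}(Y,\lambda)$ weakly converging to $d\lambda$. Write $\mca{U} := \bigcup_{\gamma \in \mca{P}_\emb(Y,\lambda)} \gamma \subset Y$ for the union of all periodic Reeb orbits; the goal is to show $\mca{U}$ is dense in $Y$.

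Next, suppose for contradiction that $\mca{U}$ is not dense. Then there is a nonempty open set $U \subset Y$ with $U \cap \mca{U} = \emptyset$. Choose $f \in C^\infty(Y,\R)$ with $f \ge 0$, $\supp f \subset U$, and $f \not\equiv 0$. Each $C_k$ is a finite positive linear combination of elements of $\mca{P}_\emb(Y,\lambda) \subset \mca{U}$, whose supports are disjoint from $\supp f$; hence $C_k(f\lambda) = 0$ for every $k$. On the other hand, $\lambda \wedge d\lambda$ is a positive volume form on $Y$, so
\[
\int_Y f\lambda \wedge d\lambda > 0.
\]
This contradicts the reformulation (\ref{181017_1}) of weak convergence given in Remark \ref{180110_2}, completing the argument.

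There is no substantive obstacle at this stage; the only thing to verify is the positivity of $\int_Y f\lambda \wedge d\lambda$, which is immediate from the convention that $Y$ is oriented so that $\lambda \wedge d\lambda > 0$. All the difficulty has been transferred into Theorem \ref{180110_1}, whose proof (to be developed using the ECH volume theorem of Cristofaro-Gardiner--Hutchings--Ramos, in the spirit of Marques--Neves--Song) lies elsewhere in the paper.
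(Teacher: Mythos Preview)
Your argument is correct and is precisely the intended deduction: the paper does not spell out a proof of this corollary, presenting it simply as an immediate consequence of Theorem \ref{180110_1}, and your bump-function contradiction via Remark \ref{180110_2} is exactly the standard way to extract density from equidistribution.
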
 

As noted in \cite{Irie}, 
the $C^2$-version of this result follows from the Hamiltonian $C^1$-closing lemma by Pugh-Robinson \cite{PR}. 
However, ``Hamiltonian $C^\infty$-closing lemma'' is known to be false by Herman \cite{Hermann}. 

As another corollary we get the following result, 
which looks closer to the equidistribution result for minimal hypersurfaces 
by Marques-Neves-Song \cite{MNS}:

\begin{cor} 
For generic $\lambda \in \Lambda(Y, \xi)$, 
there exists a sequence $(\gamma_k)_{k \ge 1}$ in $\mca{P}_\emb(Y, \lambda)$ such that 
\[ 
\lim_{k \to \infty} \frac{\gamma_1 + \ldots + \gamma_k}{T_{\gamma_1} + \cdots + T_{\gamma_k}} = \frac{d\lambda}{\vol(Y, \lambda)}
\] 
weakly as currents. 
\end{cor}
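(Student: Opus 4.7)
The plan is to deduce this corollary directly from Theorem~\ref{180110_1} by an integer-approximation and concatenation argument; no new geometric input is required.

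First, apply Theorem~\ref{180110_1} to obtain a sequence $(C_k)_{k\ge 1}$ in $\mca{C}(Y,\lambda)$ with $C_k\to d\lambda$ weakly. Testing against $\lambda$ gives $C_k(\lambda)\to\vol(Y,\lambda)>0$, so the normalizations satisfy $C_k/C_k(\lambda)\to d\lambda/\vol(Y,\lambda)$ weakly. Write $C_k=\sum_i a_{k,i}\gamma_{k,i}$ with finitely many distinct $\gamma_{k,i}\in\mca{P}_\emb(Y,\lambda)$ and $a_{k,i}>0$, and approximate each $C_k$ by an element of $\mca{C}_\Z(Y,\lambda)$: choose positive integers $M_k\to\infty$ (to be calibrated later) and set $n_{k,i}:=\max(\lfloor M_k a_{k,i}\rfloor,1)$, $C'_k:=\sum_i n_{k,i}\gamma_{k,i}$. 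Because $C_k$ has only finitely many orbits and $|n_{k,i}/M_k - a_{k,i}|<1/M_k$, taking $M_k$ large enough at each step forces $C'_k/C'_k(\lambda)\to d\lambda/\vol(Y,\lambda)$ weakly.

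Next, enumerate the orbits in each $C'_k$ with their integer multiplicities and concatenate these finite lists across $k$ to form a single sequence $(\gamma_j)_{j\ge 1}$ in $\mca{P}_\emb(Y,\lambda)$. Let $N_k:=\sum_i n_{k,i}$ be the length of the $k$-th block and $S_K:=\sum_{k=1}^K C'_k$. At block-boundary indices $M=N_1+\cdots+N_K$ the normalized partial sum equals
\[
\frac{\gamma_1+\cdots+\gamma_M}{T_{\gamma_1}+\cdots+T_{\gamma_M}} = \frac{S_K}{S_K(\lambda)},
\]
which is a weighted average of the $C'_k/C'_k(\lambda)$ with positive weights $C'_k(\lambda)\sim M_k\vol(Y,\lambda)$. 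Since each term tends to $d\lambda/\vol(Y,\lambda)$ and the weights have divergent sum, this block-boundary subsequence converges weakly to $d\lambda/\vol(Y,\lambda)$.

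For intermediate indices $N_1+\cdots+N_{K-1}<M<N_1+\cdots+N_K$, the numerator and denominator differ from those at the previous block boundary by at most $c_\alpha C'_K(\lambda)$ and $C'_K(\lambda)$ respectively, where $c_\alpha:=\sup_Y|\alpha(R_\lambda)|$. If $M_k$ is chosen to grow at most polynomially---for example $M_k=k$, so that $C'_K(\lambda)/S_{K-1}(\lambda)\sim 2/K\to 0$---these errors are negligible compared with the block-boundary denominator, and the full sequence $(\gamma_j)_{j\ge 1}$ equidistributes. The only real subtlety, which is minor, is calibrating $M_k$ so that individual blocks are small relative to the cumulative total; this Ces\`aro-type condition converts convergence of the ratios $C'_k/C'_k(\lambda)$ into convergence of the single normalized sequence.
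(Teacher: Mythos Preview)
Your overall strategy is the same as the paper's: approximate each $C_k$ by an integer Reeb current, concatenate the resulting finite lists of orbits, and show that the normalized partial sums converge. The paper carries this out in exactly the same way, isolating the concatenation step as an elementary sequence lemma (Lemma~\ref{180123_1}).

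There is, however, a genuine gap in your calibration of $M_k$. You impose two requirements: $M_k$ must be \emph{large enough} (depending on $C_k$) so that $C'_k/C'_k(\lambda)$ is close to $C_k/C_k(\lambda)$, and $M_k$ must grow \emph{slowly enough} (``at most polynomially'') so that $C'_K(\lambda)/S_{K-1}(\lambda)\to 0$. These can conflict. The approximation error in passing from $C_k$ to $C'_k/M_k$ is of order $P_k/M_k$, where $P_k:=\sum_i T_{\gamma_{k,i}}$ is the sum of the periods of the \emph{distinct} orbits in $C_k$; Theorem~\ref{180110_1} gives no bound whatsoever on $P_k$, so $M_k$ may be forced to grow arbitrarily fast. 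If $P_k$ grows super-exponentially, then any sequence with $M_k\gg P_k$ necessarily has $M_K/\sum_{k<K}M_k\not\to 0$. In particular, the suggestion $M_k=k$ need not make $C'_k/C'_k(\lambda)$ converge at all.

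The paper avoids this by decoupling approximation from calibration: first pass to integer currents with any adequate denominators, and then \emph{repeat} the $k$-th block $r_k$ times, choosing $r_k$ after the fact so that the $(k{+}1)$-st block is small compared with $r_k$ copies of the $k$-th (this is precisely Lemma~\ref{180123_1}). Repetition leaves the normalized current $C'_k/C'_k(\lambda)$ unchanged but lets one make each new block negligible relative to the accumulated mass, no matter how large it is. With that single modification your argument is complete and coincides with the paper's.
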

\begin{rem} 
We do not assume $i \ne j \implies \gamma_i \ne \gamma_j$. 
\end{rem} 
\begin{proof} 
Let $(C_k)_{k \ge 1}$ be a sequence in $\mca{C}(Y, \lambda)$ which weakly converges to $\frac{d\lambda}{\vol (Y, \lambda)}$ as $k \to \infty$. 
Each $C_k$ can be written as
\[ 
C_k = \sum_{j=1}^{m_k} a_{k, j} \gamma_{k,j} 
\]
where each $a_{k,j}$ is a positive real number, and 
$\gamma_{k,1}, \ldots, \gamma_{k, m_k}$ 
are distinct elements in $\mca{P}_\emb(Y, \lambda)$. 
Let us take $a'_{k,j} \in \Q_{>0}$ such that 
\[ 
\lim_{k \to \infty} \sum_{j=1}^{m_k}  \lvert a_{k,j} - a'_{k,j} \rvert \cdot T_{\gamma_{k,j}} = 0. 
\] 
Setting 
$C'_k := \sum_{j=1}^{m_k} a'_{k,j} \gamma_{k,j}$ 
for each $k$, 
the sequence $(C'_k)_k$ 
weakly converges to $\frac{d\lambda}{\vol(Y, \lambda)}$ as $k \to \infty$. 
Let us write 
\[ 
C'_k = \frac{1}{n_k} \sum_{j=1}^{m_k} l_{k,j} \gamma_{k,j}
\]
where $n_k$ and $l_{k,j}$ are positive integers. 
Since $\lim_{k \to \infty} C'_k(\lambda)=1$, we obtain
\[ 
\lim_{k \to \infty} \frac{1}{n_k} \sum_{j=1}^{m_k} l_{k,j} T_{\gamma_{k,j}} = 1, 
\] 
thus 
\[ 
\lim_{k \to \infty} \frac{\sum_{j=1}^{m_k} l_{k,j} \gamma_{k,j}}{\sum_{j=1}^{m_k} l_{k,j} T_{\gamma_{k,j}}} = \frac{d\lambda}{\vol(Y, \lambda)}. 
\] 
Now we can complete the proof with 
Lemma \ref{180123_1} below. 
\end{proof}

\begin{lem}\label{180123_1}
Let $(a_n)_{n \ge 1}$ be a sequence of positive real numbers such that $\inf_n a_n >0$, 
and $(n_k)_{k \ge 1}$ be a strictly increasing sequence of positive integers. 
Then there exists a sequence $(m_k)_{k \ge 1}$ of positive integers 
which satisfies the following property: 
\begin{quote}
Let $(b_n)_{n \ge 1}$ be a sequence of real numbers and $\alpha \in \R$ such that 
\[
\sup_n \frac{|b_n|}{a_n} < \infty, 
\qquad 
\lim_{k \to \infty} \frac{b_{n_{k-1}+1} + \cdots + b_{n_k}}{a_{n_{k-1}+1} + \cdots + a_{n_k}} = \alpha.
\] 
Then $\lim_{k \to \infty} \frac{b_{m_1} + b_{m_2} + \cdots + b_{m_k}}{a_{m_1} + a_{m_2} + \cdots + a_{m_k}} = \alpha$. 
\end{quote} 
\end{lem}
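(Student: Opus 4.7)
The plan is to build $(m_k)$ as a concatenation of ``super-blocks'', where super-block $j$ consists of $M_j$ consecutive passes through the $j$-th block $\{ n_{j-1}+1, \ldots, n_j \}$ (setting $n_0 := 0$), with $M_j$ chosen so that the accumulated weight after block $j$ is so large that the next block is negligible. I will then check the desired convergence in two steps: a Toeplitz-type argument for $k$ at the right end of a super-block, plus a direct estimate showing the partial ratio stays close to $\alpha$ inside a super-block.

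Put $A_j := a_{n_{j-1}+1} + \cdots + a_{n_j}$; since each block has at least one term, $A_j \ge \inf_n a_n > 0$. Choose positive integers $M_j$ inductively so that $Q_j := \sum_{i \le j} M_i A_i$ satisfies $Q_j \ge j A_{j+1}$, which is always feasible because $A_j > 0$; note that automatically $Q_j \ge j \inf_n a_n \to \infty$. Now fix $(b_n)$ and $\alpha$ as in the hypothesis and set $B_j := b_{n_{j-1}+1} + \cdots + b_{n_j}$, $x_j := B_j / A_j$, and $P_j := \sum_{i \le j} M_i B_i$. Since $x_j \to \alpha$ and $Q_j \to \infty$, Toeplitz's lemma applied with weights $M_j A_j$ yields $P_j / Q_j \to \alpha$. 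For intermediate $k$, decompose the partial sum at step $k$ as $J$ completed super-blocks, $s$ full passes through block $J+1$, and one partial pass whose $a$-contribution $\Delta_Q$ lies in $[0, A_{J+1}]$ and whose $b$-contribution $\Delta_P$ satisfies $|\Delta_P| \le C \Delta_Q$, where $C := \sup_n |b_n|/a_n < \infty$. Then
\[
P_k - \alpha Q_k \;=\; (P_J - \alpha Q_J) \;+\; s A_{J+1}(x_{J+1} - \alpha) \;+\; (\Delta_P - \alpha \Delta_Q),
\]
and dividing by $Q_k$ (using $Q_k \ge Q_J$ and $Q_k \ge s A_{J+1}$) bounds the three terms respectively by $|P_J/Q_J - \alpha|$, $|x_{J+1} - \alpha|$, and $(C + |\alpha|) A_{J+1}/Q_J \le (C + |\alpha|)/J$, all of which tend to $0$ as $J \to \infty$.

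The delicate estimate is the third one: we need the partial-pass contribution inside super-block $J+1$ to be negligible against the accumulated mass $Q_J$, \emph{uniformly} over admissible $(b_n)$. This is precisely what the inductive condition $Q_j \ge j A_{j+1}$ enforces. The hypothesis $\inf_n a_n > 0$ is what guarantees this condition is always enforceable and that $Q_j \to \infty$, while $\sup_n |b_n|/a_n < \infty$ is what lets us convert a partial pass's $a$-mass $\Delta_Q$ into a bound on its $b$-mass $\Delta_P$.
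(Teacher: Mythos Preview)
Your proof is correct and follows essentially the same approach as the paper: both build $(m_k)$ by repeating block $j$ a number of times ($M_j$ for you, $r_j$ in the paper) chosen so that the accumulated $a$-mass after block $j$ dominates the mass of block $j+1$; the paper's condition $A_{k+1}/(r_k A_k)\to 0$ and your condition $Q_j\ge jA_{j+1}$ serve the same purpose. The paper omits the verification with ``it is easy to check'', whereas you have supplied the details via the Toeplitz step and the three-term decomposition at intermediate $k$.
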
 
\begin{proof}
Let us define a sequence $(d_k)_{k \ge 1}$ so that $d_1:= n_1$ and $d_k:= n_k - n_{k-1}$ for $k \ge 2$. 
Take a sequence of positive integers $(r_k)_{k \ge 1}$ so that 
$\lim_{k \to \infty} \frac{a_{n_k+1} + \cdots + a_{n_{k+1}}}{r_k \cdot (a_{n_{k-1}+1} + \cdots + a_{n_k})} = 0$. 
Then we set $n_0:= 0$ and define a sequence $(m_k)_{k \ge 1}$ by 
\[ 
m_{r_1 d_1 + \cdots + r_{l-1}d_{l-1} + sd_l + p} := n_{l-1} + p \quad( l \ge 1, \, 0 \le s \le r_l - 1, \, 1 \le p \le d_l).
\]
It is easy to check that the sequence $(m_k)_{k \ge 1}$ satisfies the required property. 
\end{proof} 

\subsection{Generic behavior of periodic Reeb orbits ``representing'' ECH homology classes}

It seems natural to expect that, 
for a $C^\infty$-generic contact form, 
positive Reeb currents ``representing'' ECH homology classes become equidistributed
as grading of the ECH homology classes goes to $\infty$.
Here we formulate this idea as follows, assuming the content of Section 2. 

For any $\lambda \in \Lambda(Y, \xi)$ and $\sigma \in \ech(Y, \xi, \Gamma) \setminus \{0\}$, 
there exists $C \in \mca{C}_{\Z}(Y, \lambda)$ such that
$C(\lambda) = c_\sigma(Y, \lambda)$. 
We say that such $C$ \textit{represents} $\sigma$ with $\lambda$. 
For generic $\lambda$ and every $\sigma \in \ech(Y, \xi, \Gamma)$, 
there exists a unique element of $\mca{C}_\Z(Y, \lambda)$ which represents $\sigma$ with $\lambda$. 
Indeed, generic $\lambda$ satisfies the following property: 
for any distinct elements $\gamma_1, \ldots, \gamma_k \in \mca{P}(Y, \lambda)$, 
their periods $T_{\gamma_1}, \ldots, T_{\gamma_k}$ are linearly independent over $\Q$. 

Now we can formulate the idea explained in the first paragraph as follows. 
Let us take $\Gamma \in H_1(Y: \Z)$ such that $c_1(\xi) + 2 \mathrm{PD}(\Gamma)$ is torsion in $H^2(Y: \Z)$, 
and let $I$ denote the relative $\Z$-grading on $\ech(Y, \xi, \Gamma)$ (see Section 2). 

\begin{que}\label{181218_1} 
Let $(\sigma_k)_{k \ge 1}$ be a sequence 
of nonzero homogeneous elements in $\ech(Y, \xi, \Gamma)$
such that $I(\sigma_{k+1}, \sigma_k) = 2$ for every $k$. 
Then, does the following property hold for a generic element $\lambda$
in $\Lambda(Y, \xi)$ ?
\begin{quote} 
If $(C_k)_{k \ge 1}$ is a sequence of currents on $Y$ such that 
$C_k$ represents $\sigma_k$ with $\lambda$ for every $k \ge 1$, 
then $\lim_{k \to \infty} \frac{C_k}{\sqrt{2k}} = \frac{d\lambda}{\sqrt{\vol(Y, \lambda)}}$. 
\end{quote} 
\end{que} 

Note that both Theorem \ref{180110_1} and the volume theorem in ECH follow from an affirmative answer to Question \ref{181218_1}. 
In Section 6, we formulate a toy model version of Question \ref{181218_1} for boundaries of star-shaped toric domains in $\C^2$, 
and give an affirmative answer for boundaries of strictly convex or concave toric domains. 

\subsection{Structure of this paper} 

The proof of Theorem \ref{180110_1} is based on the volume theorem in embedded contact homology \cite{CGHR}, 
and inspired by the argument in \cite{MNS}. 
The argument in \cite{MNS}, 
which is based on the volume theorem (or Weyl Law) for the volume spectrum \cite{LMN}, 
is a beautiful quantitative refinement of the argument in \cite{IMN}. 

Let us explain the structure of this paper. 
Section 2 collects some facts from the theory of embedded contact homology. 
Section 3 gives a proof of Theorem \ref{180110_1} assuming Lemmas  \ref{Localnondegenerate}, \ref{Familynondegenerate}, \ref{DerivativeOfCapacity}, \ref{MNS}. 
Lemma \ref{MNS} is same as Lemma 3 in \cite{MNS}. 
Lemmas \ref{Localnondegenerate} and \ref{Familynondegenerate} are proved in Section 4, 
and Lemma \ref{DerivativeOfCapacity} is proved in Section 5. 
Finally in Section 6, we discuss a toy model version of Question \ref{181218_1} for boundaries of star-shaped toric domains. 

\textbf{Acknowledgement.}
The author appreciates Chris Gerig for his email which motivated the author to write this paper, 
and for his comments on preliminary versions of this paper. 
This research is supported by JSPS KAKENHI
Grant Number 18K13407. 

\section{Preliminaries from embedded contact homology} 

In this section we briefly collect some facts from the theory of embedded contact homology (ECH). 
For further details, see \cite{Lecturenote} and references therein. 

Let $Y$ be any closed oriented three-manifold, and 
$\xi$ be any contact distribution on $Y$. 
For any $\Gamma \in H_1(Y: \Z)$, 
one can define a $\Z/2$-vector space
$\ech(Y, \xi, \Gamma)$ 
with a relative $\Z/d$-grading, 
where $d$ denotes the divisivility of 
$c_1(\xi) + 2 \mathrm{PD}(\Gamma)$ in $H^2(Y: \Z)$ mod torsion. 
Here $c_1(\xi)$ denotes the first Chern class of $\xi$ equipped with a complex structure $J$
such that $d\lambda(v, Jv)>0$ for any $v \in \xi_p \setminus \{0\} \, (\forall p \in Y)$. 
In particular, if $c_1(\xi) + 2 \mathrm{PD}(\Gamma)$ is torsion in $H^2(Y: \Z)$, 
then $\ech(Y, \xi, \Gamma)$ has a relative $\Z$-grading. 
Note that such $\Gamma$ exists, 
because the fact that $Y$ is parallelizable implies that 
$c_1(\xi) \in 2 H^2(Y: \Z)$. 
We fix such $\Gamma$ in the rest of this paper. 

For any $\sigma \in \ech(Y, \xi, \Gamma) \setminus \{0\}$
and $\lambda \in \Lambda(Y, \xi)$, 
one can define a \textit{spectral invariant}
$c_\sigma(Y, \lambda) \in \R_{\ge 0}$, 
which was introduced in \cite{Quantitative}. 
The spectral invariant satisfies the following properties: 

\textbf{Spectrality.} 
For any $\sigma \in \ech(Y, \xi, \Gamma) \setminus \{0\}$ and $\lambda \in \Lambda(Y, \xi)$
such that $c_\sigma(Y, \lambda)>0$, 
there exist positive integers $a_1, \ldots, a_k$ and $\gamma_1, \ldots, \gamma_k \in \mca{P}_\emb(Y, \lambda)$ such that 
\[ 
c_\sigma(Y, \lambda) = \sum_{1 \le j \le k} a_j T_{\gamma_j}. 
\] 

\textbf{Conformality.} 
$c_\sigma(Y, a\lambda) = a c_\sigma(Y, \lambda)$ for any $a \in \R_{>0}$. 

\textbf{Monotonicity.} 
$c_\sigma(Y, \lambda) \le c_\sigma(Y, f\lambda)$ for any $f \in C^\infty(Y, \R_{\ge 1})$. 

\textbf{$C^0$-continuity.}
Let $(f_j)_{j \ge 1}$ be a sequence in $C^\infty(Y, \R_{>0})$ such that $\lim_{j \to \infty} \| f_j - 1 \|_{C^0}=0$. 
Then $\lim_{j \to \infty} c_\sigma(Y, f_j \lambda) = c_\sigma(Y, \lambda)$. 

\textbf{Volume theorem.} 
Assume that $Y$ is connected, and 
let $(\sigma_k)_{k \ge 1}$ be a sequence of nonzero homogeneous elements in $\ech(Y, \xi, \Gamma)$ such that 
$I(\sigma_{k+1}, \sigma_k) = 2$ for any $k$, where $I$ denotes the relative grading. 
Then, for any $\lambda \in \Lambda(Y, \xi)$ there holds
\[
\lim_{k \to \infty} \frac{c_{\sigma_k}(Y, \lambda)}{\sqrt{k}} = \sqrt{2 \vol(Y, \lambda)}. 
\] 

Here are some explanations on these properties. 
Spectrality follows easily from the definition of spectral invariant; see \cite{Irie} Lemma 2.4. 
Conformality is straightforward from the definition, 
and monotonicity follows from cobordism maps between filtered ECH; 
see \cite{Quantitative}. 
$C^0$-continuity is an immediate consequence of conformality and monotonicity. 
Volume theorem is proved in \cite{CGHR}. 
Note that there always exists a sequence $(\sigma_k)_{k \ge 1}$ which satisfies the assumption of the volume theorem. 
This follows from the isomorphism between ECH and a version of Seiberg-Witten Floer cohomology by \cite{Taubes} and subsequent papers, 
and the corresponding existence result in Seiberg-Witten Floer cohomology by \cite{KM}. 

\section{Proof of Theorem \ref{180110_1} modulo lemmas} 

In Section 3.1, we state Lemmas \ref{Localnondegenerate}, \ref{Familynondegenerate}, \ref{DerivativeOfCapacity}, \ref{MNS}.
In Section 3.2, we reduce Theorem \ref{180110_1} to the key statement Proposition \ref{180110_7}, which we prove in Section 3.3
assuming these lemmas. 

\subsection{Statements of lemmas} 

\begin{lem}\label{Localnondegenerate} 
Let $\lambda \in \Lambda(Y, \xi)$, 
and $\gamma_1, \ldots, \gamma_k$ be distinct elements in $\mca{P}_\emb(Y, \lambda)$. 
For any $\ep>0$, there exists $\lambda' \in \Lambda(Y, \xi)$ such that 
\begin{itemize} 
\item 
$d_{C^\infty}(\lambda, \lambda') < \ep$. 
\item
$\gamma_1, \ldots, \gamma_k \in \mca{P}_\emb(Y, \lambda')$.
\item 
$\gamma_1, \ldots, \gamma_k$ are nondegenerate with respect to $\lambda'$. 
\end{itemize} 
\end{lem}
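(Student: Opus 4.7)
The plan is to perturb $\lambda$ conformally, i.e.\ to look for $\lambda' = e^f \lambda$ for a small $f \in C^\infty(Y,\R)$ whose support lies in a union of pairwise disjoint tubular neighborhoods $U_1, \ldots, U_k$ of the $\gamma_i$, and which satisfies $f|_{\gamma_i} \equiv 0$ and $df|_{\gamma_i} \equiv 0$ for each $i$. From $d\lambda' = e^f(df \wedge \lambda + d\lambda)$ we see that under these conditions $\lambda'(p) = \lambda(p)$ and $d\lambda'(p) = d\lambda(p)$ as forms at every $p \in \bigcup_i \gamma_i$, so $R_{\lambda'} = R_\lambda$ along each $\gamma_i$. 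This gives $\gamma_i \in \mca{P}_\emb(Y, \lambda')$ with the same period as before, and $d_{C^\infty}(\lambda, \lambda') < \ep$ as soon as $\|f\|_{C^\infty}$ is small enough. The problem therefore reduces to choosing $f$ that in addition forces every $\rho(\gamma_i, \lambda')$ to have no eigenvalue equal to $1$.

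For the nondegeneracy I would fix, on each $U_i$, a tubular-neighborhood chart $(\theta, x, y) \in (\R/T_{\gamma_i}\Z) \times \R^2$ with $\gamma_i = \{x = y = 0\}$, and take $f|_{U_i}$ of the form $f(\theta,x,y) = \chi(x,y) \cdot \tfrac{1}{2} \langle Q_i(\theta)(x,y),(x,y)\rangle$, where $\chi$ is a compactly supported cutoff equal to $1$ near $\gamma_i$ and $Q_i : \R/T_{\gamma_i}\Z \to \mathrm{Sym}(2,\R)$ is a smooth loop of symmetric $2 \times 2$ real matrices. Such $f$ satisfies the vanishing requirements, and the infinitesimal effect of turning on $Q_i$ on the Reeb flow is concentrated in the linearized transverse dynamics along $\gamma_i$; a variation-of-parameters computation shows that the differential at $Q_i = 0$ of the map $Q_i \mapsto \rho(\gamma_i, \lambda')$ surjects onto $T_{\rho(\gamma_i,\lambda)}\mathrm{Sp}(2,\R)$, since quadratic Hamiltonians on the transverse disk generate all infinitesimal conjugations of the linearized flow.

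Consequently $Q_i \mapsto \rho(\gamma_i, \lambda')$ is a submersion near $Q_i = 0$, and the preimage of the proper real-algebraic subvariety $\{\rho \in \mathrm{Sp}(2,\R) \mid 1 \in \mathrm{spec}(\rho)\}$ has empty interior and positive codimension in the (infinite-dimensional) space of loops $Q_i$. Choosing such $Q_i$ of arbitrarily small $C^\infty$-norm independently for each $i$ (using disjointness of the $U_i$) then simultaneously makes all $\rho(\gamma_i, \lambda')$ nondegenerate, which completes the construction. The main obstacle is the submersion claim: one has to verify, by differentiating the linearized Reeb flow in the direction of the perturbation $Q_i$, that the resulting endomorphism-valued integral $\int_0^{T_{\gamma_i}} \Phi_t \, J_0 Q_i(t) \, \Phi_t^{-1}\, dt$, with $\Phi_t$ the unperturbed linearized flow, achieves every prescribed element of $\mathfrak{sp}(2,\R)$ as $Q_i$ varies. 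This is a standard but slightly technical linear-algebra computation in the Darboux chart, and once it is carried out the required $\ep$-smallness follows automatically.
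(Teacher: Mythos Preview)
Your proposal is correct and is essentially the same argument the paper gives: the paper reduces Lemma~\ref{Localnondegenerate} to Lemma~\ref{180422_1}, which is precisely your submersion claim in the local Darboux model $\lambda = x\,dy + dz$, proved by the explicit formula $(dA)_0(h) = \int_{-1}^1 \begin{pmatrix} \partial_{xy}h & \partial_{yy}h \\ -\partial_{xx}h & -\partial_{xy}h \end{pmatrix}(0,0,t)\,dt$, i.e.\ exactly your variation-of-parameters integral specialized to the trivial linearized flow $\Phi_t = \id$. Your quadratic ansatz $f = \tfrac12\langle Q(\theta)(x,y),(x,y)\rangle$ isolates exactly the second partials that appear in this formula, so the two computations coincide.
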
 
Lemma \ref{Localnondegenerate} is an immediate consequence of 
Lemma \ref{180422_1} which we state later. 

Let $N \in \Z_{>0}$, 
and $\lambda$ be a $C^\infty$-section of the bundle 
$\pi^*(T^*Y)$, where $\pi: Y \times [0,1]^N \to Y$ is the projection map. 
For each $\tau \in [0,1]^N$ we define $\lambda_\tau \in \Omega^1(Y)$ by 
$\lambda_\tau(y):= \lambda(\tau, y)$. 
Let $\Lambda^N(Y, \xi)$ denote the set of  $\lambda$ such that 
$\lambda_\tau \in \Lambda(Y, \xi)$ for any $\tau \in  [0,1]^N$. 
The set $\Lambda^N(Y,\xi)$ is equipped with the topology induced from the 
$C^\infty$-topology on the space of $C^\infty$-sections of $\pi^*(T^*Y)$. 

\begin{lem}\label{Familynondegenerate} 
For generic $\lambda \in \Lambda^N(Y, \xi)$, 
\[ 
\mathrm{measure}(\{ \tau \in [0,1]^N \mid \text{$\lambda_\tau$ is nondegenerate} \} ) = 1. 
\] 
\end{lem}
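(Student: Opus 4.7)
The plan is a Baire category argument that reduces the parametric statement to the non-parametric perturbation result (essentially Lemma \ref{180422_1}, which underlies Lemma \ref{Localnondegenerate}) via a real-analytic transversality step. I would first decouple the period bound. For each $T \in \Z_{>0}$, let $V_T \subset \Lambda(Y,\xi)$ denote the set of $\mu$ all of whose closed Reeb orbits of period $\le T$ are nondegenerate; $V_T$ is open by continuous dependence of closed orbits and their return-map eigenvalues on the contact form. Set
\[
W_T := \bigl\{\lambda \in \Lambda^N(Y,\xi) \,:\, \mathrm{meas}\{\tau \in [0,1]^N : \lambda_\tau \notin V_T\} = 0 \bigr\}.
\]
Since every degenerate orbit has some finite period, $\bigcap_{T \in \Z_{>0}} W_T$ is contained in the set appearing in the lemma, so it suffices to prove each $W_T$ is residual.

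I would then write $W_T = \bigcap_{m \ge 1} U_{T,m}$ with $U_{T,m} := \{\lambda : \mathrm{meas}\{\tau : \lambda_\tau \notin V_T\} < 1/m\}$, and verify openness and density of each $U_{T,m}$. Openness follows from inner regularity of Lebesgue measure: given $\lambda \in U_{T,m}$, pick a compact $K$ in the open good set $\{\tau : \lambda_\tau \in V_T\}$ with $\mathrm{meas}(K) > 1 - 1/m$; since $V_T$ is open and $\tau \mapsto \lambda'_\tau$ depends continuously on $\lambda'$, one has $K \subset \{\tau : \lambda'_\tau \in V_T\}$ for every $\lambda'$ close enough to $\lambda$, whence $\lambda' \in U_{T,m}$.

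For density, given $\lambda_0 \in \Lambda^N(Y,\xi)$ and $\epsilon > 0$, I would in fact produce $\lambda'$ within $\epsilon$ whose bad set has measure exactly zero, by the following three-step construction: (i) using Stone--Weierstrass in the $\tau$ variables applied to the coefficients of $\lambda_0$ in local frames, approximate $\lambda_0$ within $\epsilon/2$ by a family $\tilde\lambda$ that depends polynomially on $\tau$; (ii) fix any $\tau_0 \in [0,1]^N$ and apply the non-parametric perturbation lemma to obtain $\mu \in V_T$ with $d_{C^\infty}(\tilde\lambda_{\tau_0}, \mu) < \epsilon/2$; (iii) set $\lambda'_\tau := \tilde\lambda_\tau + (\mu - \tilde\lambda_{\tau_0})$, which remains polynomial in $\tau$, satisfies $d_{C^\infty}(\lambda_0, \lambda') < \epsilon$, and has $\lambda'_{\tau_0} = \mu \in V_T$. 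For such a real-analytic family, the Reeb vector field and its time-$t$ flow are real-analytic in $\tau$, and near any nondegenerate closed orbit the implicit function theorem yields a real-analytic family of orbits whose degeneracy condition ``$1$ is an eigenvalue of the linearized return map'' is a real-analytic equation in $\tau$. Using compactness of $[0,1]^N$ together with the standard a priori bound on period-$\le T$ Reeb orbits in smooth families, one covers the parameter space by finitely many neighborhoods on which this description applies, obtaining that $\{\tau : \lambda'_\tau \notin V_T\}$ is a sub-analytic subset of $[0,1]^N$. Since $\tau_0$ is not in it, it is proper, hence of Lebesgue measure zero. The intersection $\bigcap_{T,m} U_{T,m}$ is then residual, finishing the proof.

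The main obstacle is the sub-analyticity step. Closed orbits of $R_{\lambda'_\tau}$ can bifurcate as $\tau$ varies, so the total set of period-$\le T$ orbits is not globally a real-analytic family of orbits over $[0,1]^N$. The remedy I would use is to work with the zero set of the real-analytic map
\[
\Psi : [0,1]^N \times Y \times (0,T] \longrightarrow Y \times \R,
\qquad
(\tau, y, t) \longmapsto \bigl(\phi^t_{\lambda'_\tau}(y) - y,\; \det(D\phi^t_{\lambda'_\tau}(y)|_\xi - I)\bigr),
\]
whose image under projection to $[0,1]^N$ of the slice $\Psi = 0$ coincides with the bad set; by Tarski--Seidenberg / sub-analyticity of projections of proper real-analytic sets, this image is sub-analytic, and nontriviality at $\tau_0$ ensures it is proper, hence negligible. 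The delicate point is arranging properness, which is handled by using the a priori compactness of period-bounded orbits to restrict $(y,t)$ to a compact set depending on the $C^1$-size of $\lambda'$.
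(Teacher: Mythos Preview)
Your Baire reduction to the sets $U_{T,m}$ and the openness argument are fine, but the density step has a genuine gap at exactly the point you flag as delicate.

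First, the map $\Psi$ is not real-analytic in all its variables. Your family $\lambda'_\tau$ is polynomial in $\tau$ but only $C^\infty$ in $y\in Y$; hence the Reeb vector field and its flow $\phi^t_{\lambda'_\tau}(y)$ are real-analytic in $\tau$ and merely smooth in $(y,t)$. Tarski--Seidenberg and the projection theorem for sub-analytic sets require the source set to be globally sub-analytic in \emph{all} variables, so they do not apply. The same issue already spoils the sentence before your remedy: the implicitly continued orbit $(y(\tau),t(\tau))$ is in general only smooth in $\tau$, not real-analytic---consider $F(\tau,u)=\tau-g(u)$ with $g$ a smooth non-analytic diffeomorphism, whose implicit solution is $u(\tau)=g^{-1}(\tau)$. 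So the degeneracy condition along a continued branch is only a smooth, not a real-analytic, equation in $\tau$.

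Second, and more seriously, even if you made everything real-analytic (by also approximating in the $Y$-direction via a real-analytic structure on $Y$ and choosing $\mu$ real-analytic), the inference ``proper sub-analytic $\Rightarrow$ Lebesgue-null'' is simply false. A closed sub-analytic subset of $[0,1]^N$ has measure zero only when it has empty interior; knowing it misses the single point $\tau_0$ does not give this. Your construction perturbs the family at one parameter value only and imposes no transversality whatsoever along the rest of $[0,1]^N$, so nothing prevents the degeneracy locus $\Psi^{-1}(0)$ from having dimension $\ge N+1$ and projecting onto a set with interior. Concretely, if the original family has an $S^1$-symmetry on an open region of $\tau$'s away from $\tau_0$, your $\lambda'$ inherits it and its bad set contains that region.

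The paper's proof takes a different route precisely to secure the missing codimension. It passes to the $C^l$ category, realizes the universal moduli space $\mca{M}=\{(\lambda,\tau,\gamma)\}$ as a Banach manifold whose projection to $\Lambda^N_{C^l}(Y,\xi)$ is Fredholm of index $N+1$, and stratifies $\mca{M}$ by the conjugacy class of the linearized return map. A direct local computation on $(\R^3,\,x\,dy+dz)$ (Lemma~\ref{180422_1}) shows each degeneracy stratum is a submanifold of codimension $\ge 1$. Sard--Smale then yields, for generic $\lambda$, an $(N+1)$-dimensional fiber $\mca{M}(\lambda)$ whose degenerate strata have dimension $\le N$; after dividing by the free $S^1$-action (reparametrization of orbits), the projection to $[0,1]^N$ has image of measure zero by finite-dimensional Sard. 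The essential input your argument lacks is exactly this codimension-one transversality of the degeneracy locus; a single nondegenerate parameter value cannot substitute for it.
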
 
\begin{rem}
This lemma is a family version of the well-known fact that 
a generic element in $\Lambda(Y, \xi)$ is nondegenerate. 
\end{rem}

The next lemma computes derivatives of ECH spectral invariants
under perturbations of contact forms. 
This is analogous to part of Lemma 2 in \cite{MNS}. 

\begin{lem}\label{DerivativeOfCapacity}

Let $\lambda \in \Lambda^N(Y, \xi)$, 
$\tau^0 \in (0,1)^N$ and 
$\sigma \in \ech(Y, \xi, \Gamma) \setminus \{0\}$. 
We assume that $\lambda_{\tau^0}$ is nondegenerate 
and the function 
\[ 
[0,1]^N \to \R; \, \tau = (\tau_1, \ldots, \tau_N)  \mapsto c_\sigma(\lambda_\tau)
\]
is differentiable at $\tau^0$. 
Then, there exists $C \in \mca{C}_\Z(Y, \lambda_{\tau^0})$ such that 
\begin{align*} 
C(\lambda_{\tau^0}) &= c_\sigma(Y, \lambda_{\tau^0}),  \\ 
C(\partial_{\tau_i} \lambda_\tau(\tau^0)) &= \partial_{\tau_i} (c_\sigma(Y, \lambda_\tau)) (\tau^0)\quad(\forall i \in \{1, \ldots, N\}). 
\end{align*} 
\end{lem}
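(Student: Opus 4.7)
The plan is to show that near $\tau^0$ the function $\tau \mapsto c_\sigma(Y, \lambda_\tau)$ takes values in a finite collection of smooth ``branches'', each arising as the period of a continuation of a positive Reeb current of $\lambda_{\tau^0}$; differentiability at $\tau^0$ will then force $\nabla c_\sigma$ at $\tau^0$ to match the gradient of one such branch, and a standard variational identity for the action functional converts that matching into the desired formula.

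Fix $R > c_\sigma(Y, \lambda_{\tau^0})$. Since $\lambda_{\tau^0}$ is nondegenerate, only finitely many embedded orbits $\gamma_1, \ldots, \gamma_M \in \mca{P}_\emb(Y, \lambda_{\tau^0})$ have period at most $R$. By the implicit function theorem applied to the Poincar\'e return map --- which has no eigenvalue $1$ by nondegeneracy --- each $\gamma_a$ admits a unique smooth continuation $\gamma_a^\tau \in \mca{P}_\emb(Y, \lambda_\tau)$ on a neighborhood $U$ of $\tau^0$, with $T_{\gamma_a^\tau}$ depending smoothly on $\tau$. A standard compactness argument further guarantees, after shrinking $U$, that no other embedded orbit of $\lambda_\tau$ with period at most $R$ appears. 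Consequently every $\tilde C \in \mca{C}_\Z(Y, \lambda_\tau)$ with $\tilde C(\lambda_\tau) \le R$ arises as a continuation $\tilde C = C^\tau := \sum_a m_a \gamma_a^\tau$ of a unique $C = \sum_a m_a \gamma_a \in \mca{C}_\Z(Y, \lambda_{\tau^0})$, and $\tau \mapsto C^\tau(\lambda_\tau)$ is smooth. (If $c_\sigma(Y, \lambda_{\tau^0}) = 0$, nonnegativity of $c_\sigma$ makes $\tau^0$ a local minimum and $C = 0$ works, so assume $c_\sigma(Y, \lambda_{\tau^0}) > 0$.)

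Let $C_1, \ldots, C_p$ be the finitely many elements of $\mca{C}_\Z(Y, \lambda_{\tau^0})$ satisfying $C_j(\lambda_{\tau^0}) = c_\sigma(Y, \lambda_{\tau^0})$. By $C^0$-continuity of $c_\sigma$, spectrality, and discreteness of the action spectrum of $\lambda_{\tau^0}$, for $\tau$ sufficiently close to $\tau^0$ one has $c_\sigma(Y, \lambda_\tau) \in \{C_j^\tau(\lambda_\tau) \mid 1 \le j \le p\}$. The key elementary observation is then: if $f$ is differentiable at $\tau^0$ and takes values in $\{g_1(\tau), \ldots, g_p(\tau)\}$ for smooth $g_j$ with $g_j(\tau^0) = f(\tau^0)$, then $\nabla f(\tau^0) = \nabla g_{j^\star}(\tau^0)$ for some $j^\star$. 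Indeed, for every $v \in \R^N$ a pigeonhole argument along a null sequence produces an index $j_v$ with $\nabla f(\tau^0) \cdot v = \nabla g_{j_v}(\tau^0) \cdot v$, so the linear subspaces $H_j := \{v : (\nabla g_j(\tau^0) - \nabla f(\tau^0)) \cdot v = 0\}$ cover $\R^N$; since a finite union of proper subspaces cannot do this, some $H_{j^\star}$ equals the whole space. Applying this yields $C := C_{j^\star}$ with $C(\lambda_{\tau^0}) = c_\sigma(Y, \lambda_{\tau^0})$ and $\partial_{\tau_i}[C^\tau(\lambda_\tau)](\tau^0) = \partial_{\tau_i} c_\sigma(Y, \lambda_\tau)(\tau^0)$.

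To finish I compute the derivative of $C^\tau(\lambda_\tau)$: writing $C = \sum_a m_a \gamma_a$ and using that each $\gamma_a^{\tau^0}$ is a critical point of the action functional $\gamma \mapsto \int_\gamma \lambda_{\tau^0}$ on the free loop space of $Y$, the contribution from varying $\gamma_a^\tau$ vanishes at $\tau^0$, leaving
\[
\partial_{\tau_i}\!\left[\sum_a m_a \int_{\gamma_a^\tau} \lambda_\tau\right]\!(\tau^0) \;=\; \sum_a m_a \int_{\gamma_a^{\tau^0}} \partial_{\tau_i} \lambda(\tau^0) \;=\; C(\partial_{\tau_i}\lambda(\tau^0)).
\]
The main obstacle is the local persistence result underpinning everything: verifying that nondegenerate embedded orbits with period $\le R$ continue smoothly and that no new orbits of bounded period appear in a neighborhood of $\tau^0$. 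This is a classical statement about persistence of nondegenerate periodic orbits for smooth flows, but it must be set up cleanly for the $N$-parameter family $\lambda_\tau$ before the rest of the argument proceeds.
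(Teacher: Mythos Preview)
Your proposal is correct and follows essentially the same route as the paper's proof in Section~5: continue the finitely many currents $C$ with $C(\lambda_{\tau^0})=c_\sigma(Y,\lambda_{\tau^0})$ to smooth branches $C^\tau(\lambda_\tau)$, observe (via spectrality, $C^0$-continuity, and nondegeneracy) that $c_\sigma(Y,\lambda_\tau)$ takes values in this finite set of branches near $\tau^0$, and conclude that differentiability forces the gradient to agree with one branch, whose $\tau$-derivative is computed by the action-variation identity (the paper's Lemma~5.1). The only cosmetic difference is that you spell out the ``finite union of hyperplanes cannot cover $\R^N$'' argument for the gradient-matching step, which the paper leaves as a one-line implication, and you flag the persistence/no-new-orbits input explicitly where the paper absorbs it into Lemma~5.1 and a short sentence.
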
 

The next lemma is exactly the same as Lemma 3 in \cite{MNS}, 
though our notations are slightly different from \cite{MNS}. 

\begin{lem}[\cite{MNS}]\label{MNS}
For any $\delta \in \R_{>0}$ and $N \in \Z_{>0}$, 
there exists $\ep \in \R_{>0}$ depending on $\delta$ and $N$, 
such that the following statement holds true: 

For any Lipschitz function $f: [0,1]^N \to \R$ with $\max f - \min f \le 2\ep$ and 
and a full measure subset $\mca{A} \subset [0,1]^N$, 
there exist $N+1$ sequences 
$(\tau^{1,m})_{m \ge 1}, \ldots, (\tau^{N+1, m})_{m \ge 1}$
on $\mca{A}$ 
satisfying the following conditions: 
\begin{itemize} 
\item There exists $\tau^\infty  \in (0,1)^N$ such that $\lim_{m \to \infty} \tau^{j, m}  = \tau^\infty$ for any $j \in \{1, \ldots, N+1\}$. 
\item $f$ is differentiable at $\tau^{j, m}$ for any $j \in \{1, \ldots, N+1\}$ and $m \ge 1$. 
\item For any $j \in \{1, \ldots, N+1\}$, there exists a limit $v^j: = \lim_{m \to \infty} \nabla f (\tau^{j,m})$.
Moreover, 
\[ 
d_{\R^N} (0, \mathrm{conv} (v^1, \ldots, v^{N+1})) < \delta. 
\] 
\end{itemize} 
\end{lem}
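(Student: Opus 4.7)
The plan is a proof by contradiction: the conclusion of the lemma is essentially the Carathéodory reformulation of a separation property for a Clarke-type generalized gradient of $f$, and a barrier-function argument will then force an interior near-critical point contradicting this separation for sufficiently small $\ep$.

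For each $\tau \in (0,1)^N$, set
$$
C(\tau) := \overline{\mathrm{conv}}\bigl\{\lim_{m\to\infty}\nabla f(\tau^m) : (\tau^m)\subset \mca{A},\ \tau^m\to\tau,\ f \text{ is differentiable at each } \tau^m\bigr\}.
$$
Because $\mca{A}$ has full measure and removal of a null set does not change the Clarke generalized gradient of a Lipschitz function (a classical theorem of Clarke), $C(\tau)$ coincides with the usual Clarke generalized gradient $\partial^{\mathrm{C}} f(\tau)$. The generator set of $C(\tau)$ is compact (closed by a diagonal argument, bounded by the Lipschitz constant of $f$), so Carathéodory's theorem in $\R^N$ gives the equivalence
$$
d(0, C(\tau^\infty)) < \delta \iff \text{there exist } v^1,\ldots,v^{N+1} \text{ as required in the conclusion.}
$$
If the lemma failed, then $d(0, C(\tau)) \ge \delta$ for every $\tau \in (0,1)^N$.

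Introduce the smooth barrier $\phi(\tau) := \prod_{i=1}^N \tau_i(1-\tau_i)$; it vanishes on $\partial[0,1]^N$, takes the value $4^{-N}$ at the center $\tau_c := (\tfrac12,\ldots,\tfrac12)$, and satisfies $\|\nabla\phi\|_{C^0} \le \sqrt{N}\cdot 4^{-(N-1)}$. Set $M := 4\ep\cdot 4^N$ and $\tilde f := f - M\phi$. The oscillation hypothesis yields
$$
\tilde f(\tau_c) \le \sup f - 4\ep < \inf f \le \min_{\partial[0,1]^N}\tilde f,
$$
so the Lipschitz function $\tilde f$ attains its minimum at some interior point $\bar\tau \in (0,1)^N$. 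The Clarke sum rule (applicable because $\phi$ is smooth) then gives $0 \in \partial^{\mathrm{C}}\tilde f(\bar\tau) = C(\bar\tau) - M\nabla\phi(\bar\tau)$, so $M\nabla\phi(\bar\tau) \in C(\bar\tau)$ and
$$
d(0, C(\bar\tau)) \le M\,|\nabla\phi(\bar\tau)| \le 16\sqrt{N}\,\ep.
$$
Choosing $\ep(\delta, N) := \delta/(32\sqrt{N})$ contradicts $d(0, C(\bar\tau)) \ge \delta$.

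No step presents a serious obstacle. The two ingredients from nonsmooth analysis — invariance of the Clarke generalized gradient under removal of a null set (so that the full-measure hypothesis on $\mca{A}$ disappears) and the sum rule $\partial^{\mathrm{C}}(f - M\phi) = \partial^{\mathrm{C}} f - M\nabla\phi$ for a Lipschitz function plus a smooth one — are both classical. Carathéodory, Rademacher's theorem, and the elementary estimate on $\phi$ are routine, so the entire argument reduces to the barrier calculation above.
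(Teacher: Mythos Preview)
Your argument is correct. Note, however, that the present paper does not supply its own proof of this lemma: it is stated with the attribution ``[\cite{MNS}]'' and the surrounding text says explicitly ``Lemma~\ref{MNS} is same as Lemma 3 in \cite{MNS}''. So there is nothing in this paper to compare against directly.

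Your approach --- recasting the conclusion via the Clarke generalized gradient (using Clarke's theorem that removing a null set of approximating points leaves $\partial^{\mathrm{C}}f$ unchanged, which absorbs the full-measure hypothesis on $\mca{A}$), forcing an interior near-critical point with the barrier $\phi(\tau)=\prod_i \tau_i(1-\tau_i)$ and the small-oscillation hypothesis, applying Fermat's rule and the smooth-plus-Lipschitz sum rule, and then unpacking via Carath\'eodory in $\R^N$ --- is the natural strategy, and it is essentially the route taken in \cite{MNS} as well. The arithmetic checks: $M\,\|\nabla\phi\|_{C^0}\le 4\ep\cdot 4^N\cdot\sqrt{N}\cdot 4^{-(N-1)}=16\sqrt{N}\,\ep$, so $\ep=\delta/(32\sqrt{N})$ gives $d(0,C(\bar\tau))\le\delta/2<\delta$, the desired contradiction.
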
 

\subsection{Proof of Theorem \ref{180110_1} assuming Proposition \ref{180110_7}}

Let us take a sequence $(\psi_i)_{i \ge 1}$ in $C^\infty(Y, \R)$
which is $C^0$-dense in $C^\infty(Y, \R)$ 
and $\psi_1 \equiv 1$. 

\begin{prop}\label{180110_7} 
Let $N \in \Z_{>0}$, $\ep \in \R_{>0}$, 
and $\mca{U}$ be any nonempty open set in $\Lambda(Y, \xi)$. 
Then there exist
$\lambda \in \mca{U}$ and $C \in \mca{C}(Y, \lambda)$ such that 
\begin{equation}\label{180110_8} 
\bigg\lvert  C (\psi_i \lambda )- \int_Y \psi_i  \lambda \wedge d\lambda \bigg\rvert < \ep  \qquad(\forall i \in \{1, \ldots, N\}). 
\end{equation} 
\end{prop}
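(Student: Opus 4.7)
The plan is to adapt the argument of Marques--Neves--Song \cite{MNS}, using the ECH volume theorem in place of their Weyl law for the volume spectrum. The central idea is to encode the target integrals $\int_Y \psi_i\,\lambda\wedge d\lambda$ (up to explicit rescaling) as derivatives of $\vol(Y,\lambda_\tau)$ along a cleverly chosen $N$-parameter family, then use Lemmas \ref{Familynondegenerate}, \ref{DerivativeOfCapacity}, and \ref{MNS} together with the volume theorem to realize these derivatives approximately by an actual positive Reeb current. Concretely, fix any $\lambda_0\in\mca{U}$ and a small parameter $\eta>0$, and consider
\[
\lambda^{(0)}_\tau := \exp\Bigl(\eta \sum_{i=1}^N \tau_i \psi_i\Bigr)\lambda_0, \qquad \tau\in[0,1]^N,
\]
which lies entirely in $\mca{U}$ provided $\eta$ is small enough. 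A direct computation gives $\partial_{\tau_i}\lambda^{(0)}_\tau=\eta\psi_i\lambda^{(0)}_\tau$ and $\lambda^{(0)}_\tau\wedge d\lambda^{(0)}_\tau=e^{2\eta\sum_j\tau_j\psi_j}\lambda_0\wedge d\lambda_0$, hence
\[
\partial_{\tau_i}\vol(Y,\lambda^{(0)}_\tau) = 2\eta \int_Y \psi_i\,\lambda^{(0)}_\tau\wedge d\lambda^{(0)}_\tau.
\]
By Lemma \ref{Familynondegenerate} we $C^\infty$-perturb $\lambda^{(0)}$ to a family $\lambda\in\Lambda^N(Y,\xi)$ still valued in $\mca{U}$, with the identities above preserved up to arbitrarily small $C^0$-error, and such that $\lambda_\tau$ is nondegenerate for almost every $\tau\in[0,1]^N$.

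Fix a sequence $(\sigma_k)_{k\ge 1}$ of nonzero homogeneous elements of $\ech(Y,\xi,\Gamma)$ with $I(\sigma_{k+1},\sigma_k)=2$, and put $F_k(\tau):=c_{\sigma_k}(Y,\lambda_\tau)$, $G(\tau):=\sqrt{2\vol(Y,\lambda_\tau)}$. Conformality and monotonicity of ECH spectral invariants force the Lipschitz constants of $F_k/\sqrt{k}$ to be uniformly bounded on $[0,1]^N$, while the volume theorem gives $F_k/\sqrt{k}\to G$ pointwise; Arzel\`a--Ascoli then upgrades this to uniform convergence. Choose small $\delta,\ep'>0$ (whose dependence on $\ep$ and $\eta$ is fixed at the end) and take $k$ so large that $\|F_k/\sqrt{k}-G\|_{C^0}<\ep'$, so that $f:=F_k/\sqrt{k}-G$ satisfies $\max f-\min f<2\ep'$. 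Apply Lemma \ref{MNS} to $f$ with parameter $\delta$, using for $\mca{A}$ the full-measure subset of $[0,1]^N$ on which $\lambda_\tau$ is nondegenerate and $F_k$ is differentiable (by our construction and Rademacher's theorem). We obtain $N+1$ sequences $(\tau^{j,m})_m$ converging to a common $\tau^\infty\in(0,1)^N$, together with vectors $v^j:=\lim_m\nabla f(\tau^{j,m})$ satisfying $d_{\R^N}(0,\mathrm{conv}(v^1,\ldots,v^{N+1}))<\delta$.

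At each $\tau^{j,m}$, Lemma \ref{DerivativeOfCapacity} furnishes a current $C^{j,m}\in\mca{C}_\Z(Y,\lambda_{\tau^{j,m}})$ with $C^{j,m}(\lambda_{\tau^{j,m}})=F_k(\tau^{j,m})$ and $C^{j,m}(\partial_{\tau_i}\lambda(\tau^{j,m}))=\partial_{\tau_i}F_k(\tau^{j,m})$. These currents have uniformly bounded mass (dominated by $\sup_\tau F_k(\tau)$, finite for fixed $k$) and are supported on finite unions of Reeb orbits of $R_{\lambda_{\tau^{j,m}}}$ of bounded period; by weak-$*$ compactness of currents and continuous dependence of Reeb orbits on the contact form, a subsequence converges to $D^j\in\mca{C}(Y,\lambda_{\tau^\infty})$ satisfying $D^j(\lambda_{\tau^\infty})=F_k(\tau^\infty)$ and $D^j(\partial_{\tau_i}\lambda(\tau^\infty))=\sqrt{k}\bigl(v^j_i+\partial_{\tau_i}G(\tau^\infty)\bigr)$. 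Choose $t_1,\ldots,t_{N+1}\ge 0$ summing to $1$ with $|\sum_j t_j v^j|<\delta$, and set $C := \tfrac{G(\tau^\infty)}{2\sqrt{k}}\sum_j t_j D^j$, a positive Reeb current on $\lambda_{\tau^\infty}$. Combining $\partial_{\tau_i}\lambda(\tau^\infty)\approx\eta\psi_i\lambda_{\tau^\infty}$ with the identity $\partial_{\tau_i}G(\tau^\infty)=2\eta\int_Y\psi_i\,\lambda_{\tau^\infty}\wedge d\lambda_{\tau^\infty}/G(\tau^\infty)$ from paragraph 1, a short calculation yields
\[
\Bigl|C(\psi_i\lambda_{\tau^\infty}) - \int_Y \psi_i\,\lambda_{\tau^\infty}\wedge d\lambda_{\tau^\infty}\Bigr| \le \frac{G(\tau^\infty)\,\delta}{2\eta} + O(\ep'),
\]
which is $<\ep$ once $\delta\ll\eta\ep$ and $\ep'$ are taken small enough. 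Setting $\lambda:=\lambda_{\tau^\infty}\in\mca{U}$ finishes the proof.

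The main technical subtlety lies in the hierarchy of parameter choices: $\eta$ is fixed first (depending only on $\mca{U}$ and on $\psi_1,\ldots,\psi_N$); then $\delta$ and $\ep'$ are chosen small relative to $\ep$ and $\eta$ (the $1/\eta$ amplification of the convex-hull error $\delta$ from Lemma \ref{MNS} is the key pressure point, but it is harmless because $\delta$ is chosen after $\eta$); finally $k$ is taken large. A secondary but important delicacy is to verify that the weak-$*$ limits $D^j$ actually lie in $\mca{C}(Y,\lambda_{\tau^\infty})$; this follows since the Reeb orbits appearing in each $C^{j,m}$ have period bounded by $\sup_\tau F_k(\tau)$ and depend continuously on the contact form, so after passing to a further subsequence they converge to Reeb orbits of $R_{\lambda_{\tau^\infty}}$ with positive coefficients.
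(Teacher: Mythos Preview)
Your proposal is correct and follows essentially the same route as the paper (which writes your $\eta$ as $1/c_1$, your $\delta$ as $c_3$, and the perturbation error as $c_2$): the same exponential $N$-parameter family, the same uniform-Lipschitz/volume-theorem argument to make $f=F_k/\sqrt{k}-G$ uniformly small, the same cascade through Lemmas \ref{Familynondegenerate}, \ref{MNS}, \ref{DerivativeOfCapacity}, and the same rescaled convex-combination limit of currents. The only imprecision is that the residual error you label $O(\ep')$ is really the perturbation error coming from Lemma \ref{Familynondegenerate} (independent of the oscillation $\ep'$ of $f$), but your parameter hierarchy in the last paragraph handles it correctly regardless.
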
 

First we prove Theorem \ref{180110_1} assuming 
Proposition \ref{180110_7}. 
For any $N \in \Z_{>0}$ and $\ep \in \R_{>0}$, 
let $\Lambda(N, \ep)$ denote 
the set of $\lambda \in \Lambda(Y, \xi) $ such that 
there exists $C \in \mca{C}(Y, \lambda)$ which is nondegenerate and 
satisfies (\ref{180110_8}). 

We show that $\Lambda(N, \ep)$ is open and dense in $\Lambda(Y, \xi)$. 
Denseness follows from Proposition \ref{180110_7} 
and Lemma \ref{Localnondegenerate}. 
To show openness, 
let $\lambda \in \Lambda(N, \ep)$ and take 
$C = \sum_{1 \le j \le k} a_j \gamma_j \in \mca{C}(Y, \lambda)$
which is nondegenerate and satisfies (\ref{180110_8}). 
Then there exists a neighborhood $\mca{U}$ of $\lambda$ in $\Lambda(Y, \xi)$ 
and $\gamma_j(\lambda') \in \mca{P}_\emb(Y, \lambda')$ for any $j \in \{1, \ldots, k\}$ and $\lambda' \in \mca{U}$, 
so that $\gamma_j(\lambda')$ varies smoothly on $\lambda'$ 
and $\gamma_j(\lambda) = \gamma_j$. 
If $\lambda' \in \mca{U}$ is sufficiently close to $\lambda$, 
then $C(\lambda'):= \sum_{1 \le j \le k} a_j \gamma_j(\lambda')$ 
is nondegenerate and satisfies (\ref{180110_8}), 
thus $\lambda' \in \Lambda(N, \ep)$. 
This completes the proof of openness. 

Now let us take any sequence $(\ep_N)_{N \ge 1}$ of positive real numbers which converges to $0$, 
and consider a residual set 
\[ 
\Lambda^* := \bigcap_{N \ge 1} \Lambda(N, \ep_N). 
\] 
We show that, 
for any  $\lambda \in \Lambda^*$, 
there exists a sequence $(C_N)_{N \ge 1}$ of $\mca{C}(Y, \lambda)$
which weakly converges to $d\lambda$. 
By $\lambda \in \Lambda(N, \ep_N)$,
there exists $C_N \in \mca{C}(Y, \lambda)$ such that 
\[ 
\bigg\lvert C_N(\psi_i  \lambda) - \int_Y \psi_i  \lambda \wedge d\lambda \bigg\rvert < \ep_N \quad (\forall i \in \{1, \ldots, N\}), 
\] 
in particular $|C_N(\lambda) - \vol(Y, \lambda)| < \ep_N$ since $\psi_1 \equiv 1$. 

Then for any $f \in C^\infty(Y, \R)$ and $i \in \{1, \ldots, N\}$, there holds 
\begin{align*}
&\qquad \bigg\lvert C_N(f\lambda) - \int_Y f\lambda \wedge d\lambda \bigg\rvert \\ 
& \le | C_N(f\lambda) - C_N(\psi_i \lambda)  | 
+ \bigg\lvert C_N(\psi_i \lambda) - \int_Y \psi_i \lambda \wedge d\lambda \bigg\rvert 
+ \bigg\lvert \int_Y (f-\psi_i) \lambda \wedge d\lambda \bigg\rvert \\
&<  \| f - \psi_i \|_{C^0}  (C_N(\lambda) + \vol(Y, \lambda))  + \ep_N \\
&<  \| f - \psi_i \|_{C^0} (2 \vol(Y, \lambda) + \ep_N) + \ep_N. 
\end{align*}
The last inequality follows from 
$|C_N(\lambda) - \vol(Y, \lambda)| < \ep_N$. 
Thus we obtain 
\[ 
\bigg\lvert C_N(f\lambda) - \int_Y f\lambda \wedge d\lambda \bigg\rvert 
< \min_{1 \le i \le N} \| f - \psi_i \|_{C^0}  \cdot  (2 \vol(Y, \lambda) + \ep_N) + \ep_N.
\] 
Since the RHS converges to $0$ as $N \to \infty$, 
we have proved 
$\lim_{N \to \infty} C_N(f\lambda) = \int_Y f \lambda \wedge d\lambda$
for any $f \in C^\infty(Y, \R)$. 
By Remark \ref{180110_2} this shows that $(C_N)_{N \ge 1}$ weakly converges to $d\lambda$. 

\subsection{Proof of Proposition \ref{180110_7}}

The proof consists of four steps. 
In the following argument we assume that $Y$ is connected; 
the general case (i.e. $Y$ may not be connected) easily follows from this case. 

\textbf{Step 1.} 
Take a sequence $(\sigma_k)_{k \ge 1}$ 
of nonzero homogeneous elements in 
$\ech(Y, \xi, \Gamma)$ as in the volume theorem. 

For any $\lambda \in \Lambda(Y, \xi) $ we define 
\[ 
\gamma_k(\lambda) := \frac{c_{\sigma_k}(Y, \lambda)}{\sqrt{k}} - \sqrt{ 2 \vol(Y, \lambda)}. 
\] 
Then the volume theorem implies 
\[ 
\lim_{k \to \infty} \gamma_k(\lambda) = 0
\] 
for any $\lambda \in \Lambda(Y, \xi)$. 

The next Lemma \ref{Lipschitz} 
shows that 
$(\gamma_k)_{k \ge 1}$ is locally uniformly Lipschitz. 

\begin{lem}\label{Lipschitz} 
For any $\lambda_0 \in \Lambda(Y, \xi)$, 
there exists $c \in \R_{>0}$ which depends only on $\lambda_0$, 
and satisfies the following property: 
\begin{quote}
For any $f, f' \in C^\infty(Y, [-1, 1])$ and $k \ge 1$, there holds 
\[ 
| \gamma_k (e^f \lambda_0) - \gamma_k(e^{f'}\lambda_0) | \le c \| f- f' \|_{C^0}. 
\] 
\end{quote} 
\end{lem}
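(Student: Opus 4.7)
The plan is to estimate the two terms
\[
\Delta_k^{\text{ech}} := \bigg\lvert \frac{c_{\sigma_k}(Y, e^f\lambda_0) - c_{\sigma_k}(Y, e^{f'}\lambda_0)}{\sqrt{k}} \bigg\rvert,
\qquad
\Delta^{\text{vol}} := \bigl\lvert \sqrt{2\vol(Y, e^f\lambda_0)} - \sqrt{2\vol(Y, e^{f'}\lambda_0)} \bigr\rvert
\]
separately, and bound each by a constant multiple of $\|f-f'\|_{C^0}$. Set $\delta := \|f-f'\|_{C^0}$, so $\delta \le 2$ under the hypothesis $f,f'\in C^\infty(Y,[-1,1])$.

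First I would handle $\Delta_k^{\text{ech}}$ using only the structural properties of the ECH spectral invariant listed in Section 2. From $f' \le f + \delta$ pointwise, one has $e^{f'}\lambda_0 \le e^\delta\, e^f\lambda_0$, so monotonicity (applied to the positive function $e^{\delta - f + f'} \ge 1$) together with conformality yields
\[
c_{\sigma_k}(Y, e^{f'}\lambda_0) \le c_{\sigma_k}(Y, e^\delta e^f\lambda_0) = e^\delta\, c_{\sigma_k}(Y, e^f\lambda_0),
\]
and symmetrically $c_{\sigma_k}(Y, e^f\lambda_0) \le e^\delta\, c_{\sigma_k}(Y, e^{f'}\lambda_0)$. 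Hence
\[
|c_{\sigma_k}(Y, e^f\lambda_0) - c_{\sigma_k}(Y, e^{f'}\lambda_0)| \le (e^\delta - 1)\, \max\bigl(c_{\sigma_k}(Y, e^f\lambda_0),\, c_{\sigma_k}(Y, e^{f'}\lambda_0)\bigr).
\]
Since $f, f' \le 1$, monotonicity and conformality give $c_{\sigma_k}(Y, e^f\lambda_0), c_{\sigma_k}(Y, e^{f'}\lambda_0) \le e\, c_{\sigma_k}(Y, \lambda_0)$. The volume theorem says $c_{\sigma_k}(Y,\lambda_0)/\sqrt{k}$ converges to $\sqrt{2\vol(Y,\lambda_0)}$, so this sequence is bounded by some $M(\lambda_0)$. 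Combined with the elementary estimate $e^\delta - 1 \le \delta\, e^2$ valid for $\delta \in [0,2]$, this gives $\Delta_k^{\text{ech}} \le e^3 M(\lambda_0)\, \delta$.

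For $\Delta^{\text{vol}}$, a direct computation using $\lambda_0 \wedge df \wedge \lambda_0 = 0$ gives
\[
\vol(Y, e^f\lambda_0) = \int_Y e^{2f} \lambda_0 \wedge d\lambda_0,
\]
so $|\vol(Y, e^f\lambda_0) - \vol(Y, e^{f'}\lambda_0)| \le \|e^{2f} - e^{2f'}\|_{C^0}\, \vol(Y, \lambda_0) \le 2e^2\, \delta\, \vol(Y, \lambda_0)$. Because $\vol(Y, e^f\lambda_0) \ge e^{-2}\vol(Y, \lambda_0) > 0$ for $f \in [-1,1]$, the inequality $|\sqrt{a}-\sqrt{b}| \le |a-b|/(2\sqrt{\min(a,b)})$ gives a Lipschitz bound on $\Delta^{\text{vol}}$ depending only on $\vol(Y,\lambda_0)$. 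Adding the two bounds yields the desired constant $c = c(\lambda_0)$.

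I do not foresee a real obstacle here: the main point is the observation that monotonicity and conformality combine to produce the multiplicative inequality $e^{-\delta} \le c_{\sigma_k}(Y, e^{f'}\lambda_0)/c_{\sigma_k}(Y, e^f\lambda_0) \le e^\delta$, after which the only nontrivial input is a uniform in $k$ bound on $c_{\sigma_k}(Y,\lambda_0)/\sqrt{k}$, supplied by the volume theorem (convergent sequences are bounded). The rest is routine linearization of $e^\delta$ and of the square root away from zero.
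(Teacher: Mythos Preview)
Your proof is correct and follows essentially the same approach as the paper: split $\gamma_k$ into the ECH spectral invariant term and the volume term, control the former via the multiplicative inequality $e^{-\delta} \le c_{\sigma_k}(Y,e^{f'}\lambda_0)/c_{\sigma_k}(Y,e^f\lambda_0) \le e^{\delta}$ from monotonicity and conformality together with the uniform bound $\sup_k c_{\sigma_k}(Y,\lambda_0)/\sqrt{k}<\infty$, and control the latter by elementary calculus. The only cosmetic difference is that you compute $\vol(Y,e^f\lambda_0)=\int_Y e^{2f}\lambda_0\wedge d\lambda_0$ explicitly and linearize $e^{2f}$, whereas the paper bounds the ratio $\vol(Y,\lambda')/\vol(Y,\lambda)$ directly; both routes are equivalent.
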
 
\begin{proof}
Let us first set
\[ 
\lambda:= e^f \lambda_0, \quad
\lambda':= e^{f'} \lambda_0, \quad
h:= f'- f. 
\] 
Then 
$e^{\min h} \le \lambda'/\lambda \le e^{\max h}$
implies
$e^{2\min h} \le \vol(Y, \lambda')/\vol(Y, \lambda) \le e^{2\max h}$, 
thus 
\begin{align*} 
|\sqrt{\vol(Y, \lambda')} - \sqrt{\vol(Y, \lambda)}| & =  |\vol(Y, \lambda') - \vol(Y, \lambda)| /(\sqrt{\vol(Y, \lambda')} + \sqrt{\vol(Y, \lambda)}) \\
& \le (e^{2 \|h\|_{C^0}} - 1) \sqrt{\vol(Y, \lambda)} \\
& \le e (e^{2 \|h\|_{C^0}} - 1) \sqrt{\vol(Y, \lambda_0)}. 
\end{align*} 
On the other hand, 
by conformality and monotonicity of ECH spectral invariants, 
\[ 
|c_{\sigma_k}(Y, \lambda') - c_{\sigma_k}(Y, \lambda)| \le e (e^{\|h\|_{C^0}} - 1) c_{\sigma_k}(Y, \lambda_0)
\] 
for any $k \ge 1$. 

Hence we obtain 
$|\gamma_k(\lambda) - \gamma_k(\lambda')| \le c \|h\|_{C^0}$, where
\[ 
c: = \frac{e^5-e}{4}  \cdot \bigg(\sup_{k \ge 1} \frac{c_{\sigma_k}(Y, \lambda_0)}{\sqrt{k}} + 2 \sqrt{2 \vol(Y, \lambda_0)} \bigg). 
\] 
\end{proof} 

\textbf{Step 2.} 
We take and fix constants $c_0, \ldots, c_3$. 
We may assume that the diameter of $\mca{U}$ with respect to $d_{C^\infty}$ is finite. 
Then we can take sufficiently large $c_0 \in \R_{>0}$ so that 
\[ 
\max 
\bigg\{
\sup_{k \ge 1} \frac{c_{\sigma_k}(Y, \lambda)}{\sqrt{k}}, \vol(Y, \lambda) 
\bigg\} < c_0 
\] 
for any $\lambda \in \mca{U}$. 

Let us take and fix $\lambda^0 \in \mca{U}$ arbitrarily. 
Then we take sufficiently large $c_1 \in \R_{>0}$ so that 
\[
| \tau \cdot \psi | < c_1, \qquad
e^{(\tau \cdot \psi)/{c_1}} \cdot \lambda^0 \in \mca{U}
\]
for any $\tau = (\tau_1, \ldots, \tau_N) \in [0,1]^N$, 
where $\tau \cdot \psi := \sum_{1 \le j \le N} \tau_j \psi_j$. 
Finally we take sufficiently small $c_2, c_3 \in \R_{>0}$ so that 
\[
\sqrt{\frac{c_0}{2}} \cdot (c_1c_3 + c_2 (c_0 + \sqrt{2c_0})) < \ep.
\]

\textbf{Step 3.} 
Applying Lemma \ref{Familynondegenerate} to $(e^{\tau \cdot \psi/c_1}\lambda^0)_{\tau \in [0,1]^N}$, 
there exists $\lambda = (\lambda_\tau)_{\tau \in [0,1]^N} \in \Lambda^N(Y, \xi)$
satisfying the following conditions: 
\begin{itemize} 
\item $\{ \tau \in [0,1]^N \mid \text{$\lambda_\tau$ is nondegenerate}\}$ is of full measure in $[0,1]^N$. 
\item $\| c_1(\partial_{\tau_i} \lambda_\tau/\lambda_\tau) - \psi_i \|_{C^0} < c_2$ for any $i \in \{1, \ldots, N\}$.  
\end{itemize} 
For any $\tau \in [0,1]^N$, 
let us define $f_\tau \in C^\infty(Y)$ by 
$f_\tau:= \log(\lambda_\tau/\lambda^0)$. 
Then, Lemma \ref{Lipschitz} implies 
\[ 
|\gamma_k(\lambda_\tau) - \gamma_k(\lambda_{\tau'})| \le 
c \| f_\tau - f_{\tau'} \|_{C^0}  \le 
c  \cdot \max_{\substack{1 \le j \le N \\ \tau \in [0,1]^N }} \| \partial_{\tau_j}f_\tau \|_{C^0} \cdot |\tau - \tau'|
\] 
where $c$ is a positive constant which does not depend on $k$. 

Let us define
$\bar{\gamma}_k: [0,1]^N \to \R$ by 
$\bar{\gamma}_k(\tau):= \gamma_k(\lambda_\tau)$. 
Then $(\bar{\gamma}_k)_{k \ge 1}$ is uniformly Lipschitz. 
Since $\lim_{k \to \infty} \bar{\gamma}_k(\tau) = 0$ for any $\tau \in [0,1]^N$
and $(\bar{\gamma}_k)_{k \ge 1}$ is uniformly Lipschitz, 
there holds $\lim_{k \to \infty} \|\bar{\gamma}_k\|_{C^0} = 0$. 
By Lemma \ref{MNS}, 
when $k$ is sufficiently large, 
there exist $N+1$ sequences 
$(\tau^{1,m})_{m \ge 1}, \ldots, (\tau^{N+1,m})_{m \ge 1}$ such that: 
\begin{itemize} 
\item For any $j \in \{1, \ldots, N+1\}$ and $m \ge 1$, 
$\bar{\gamma}_k$ is differentiable at $\tau^{j, m}$, 
and $\lambda_{\tau^{j,m}}$ is nondegenerate. 
\item There exists $\tau_\infty  \in (0,1)^N$ such that $\lim_{m \to \infty} \tau^{j, m}  = \tau^\infty $ for every $j \in \{1, \ldots, N+1\}$. 
\item There exists the limit $\lim_{m \to \infty} \nabla \bar{\gamma}_k (\tau^{j,m}) =:v^j$ for every $j \in \{1, \ldots, N+1\}$, 
and 
\[ 
d_{\R^N} (0, \text{conv}( v^1, \ldots, v^{N+1})) < c_3. 
\] 
\end{itemize}
By the last condition, there exist $a_1, \ldots, a_{N+1} \in [0,1]$ such that $\sum_{1 \le j \le N+1} a_j = 1$ and 
$\bigg\lvert \sum_{1 \le j \le N+1} a_j v^j  \bigg\rvert < c_3$. 
Thus when $m$ is sufficiently large, there holds 
\[ 
\bigg\lvert \sum_{1 \le j \le N+1} a_j  \cdot \partial_{\tau_i} \bar{\gamma_k} (\tau^{j,m}) \bigg\rvert < c_3
 \qquad (\forall i  \in \{1, \ldots, N\}). 
\] 

\textbf{Step 4.}
Now we are going to show that 
there exists $C_\infty \in \mca{C}(Y, \lambda_{\tau^\infty})$ such that 
\[ 
\bigg\lvert C_\infty (\psi_i \lambda_{\tau^\infty} ) - \int_Y \psi_i \lambda_{\tau^\infty} \wedge d\lambda_{\tau^\infty} \bigg\rvert < \ep  \qquad (\forall i  \in \{1, \ldots, N\}), 
\] 
which completes the proof of Proposition \ref{180110_7}. 

By Lemma \ref{DerivativeOfCapacity}, 
for any $j \in \{1, \ldots, N+1\}$ and $m \in \Z_{>0}$, 
there exists $C_{j,m} \in \mca{C}_\Z(Y, \lambda_{\tau^{j,m}})$ such that 
$C_{j,m}(\lambda_{\tau^{j,m}})  = c_{\sigma_k}(\lambda_{\tau^{j,m}})$ and 
\[ 
C_{j,m}(\partial_{\tau_i} \lambda_\tau(\tau^{j,m}) ) = \partial_{\tau_i} c_{\sigma_k}(\lambda_\tau)(\tau^{j,m}) \qquad( \forall i \in \{1, \ldots, N\}). 
\] 
On the other hand, direct computations show that, for any $\tau \in [0,1]^N$ there holds 
\[ 
\partial_{\tau_i} \bar{\gamma}_k(\tau) 
= 
\frac{\partial_{\tau_i} c_{\sigma_k}(\lambda_\tau)}{\sqrt{k}} - 
\sqrt{\frac{2}{\vol(Y, \lambda_\tau)}} \cdot \int_Y \partial_{\tau_i} \lambda_\tau \wedge d \lambda_\tau. 
\] 
Thus we obtain 
\[ 
\bigg\lvert  \sum_{1 \le j \le N+1} a_j \bigg( \frac{C_{j,m}(\partial_{\tau_i} \lambda_\tau(\tau^{j,m}))}{\sqrt{k}} - \sqrt{\frac{2}{\vol(Y, \lambda_{\tau^{j,m}})}} \int_Y \partial_{\tau_i} \lambda_\tau(\tau^{j,m}) \wedge d\lambda_{\tau^{j,m}} \biggr) 
\bigg\rvert  < c_3
\] 
for any $i \in \{1,\ldots,N\}$. 
On the other hand 
\[ 
\| c_1 (\partial_{\tau_i} \lambda_\tau/\lambda_\tau) - \psi_i \|_{C^0} < c_2 \qquad( \forall i \in \{1, \ldots, N\}). 
\] 
Therefore
\begin{align*} 
&\bigg\lvert  \sum_{1 \le j \le N+1} a_j \bigg( \frac{C_{j,m}(\psi_i \lambda_{\tau^{j,m}} )}{\sqrt{k}} - \sqrt{\frac{2}{\vol(Y, \lambda_{\tau^{j,m}})}} \int_Y \psi_i \lambda_{\tau^{j,m}}  \wedge d\lambda_{\tau^{j,m}} \biggr) 
\bigg\rvert  \\
& < c_1 c_3 + c_2 \max_{1 \le j \le N+1}  \bigg( \frac{c_{\sigma_k}(\lambda_{\tau^{j,m}})}{\sqrt{k}} + \sqrt{ 2 \vol (Y, \lambda_{\tau^{j,m}})} \bigg). 
\end{align*} 

For each $1 \le j \le N+1$, 
a certain subsequence of $(C_{j,m})_{m \ge 1}$ 
weakly converges to an element of $\mca{C}_\Z(Y, \lambda_{\tau^\infty})$ as $m \to \infty$. 
Indeed, let 
\[ 
C_{j,m} := \sum_{1 \le i \le I(j,m)} a_{i,j,m} \gamma_{i,j,m} \qquad( a_{i,j,m} \in \Z_{>0}, \, \gamma_{i,j,m} \in \mca{P}_\emb(Y, \lambda_{\tau^{j,m}} ) ). 
\] 
Setting $\delta:= \min \{T_\gamma \mid \gamma \in \mca{P}(Y, \lambda_\tau), \, \tau \in [0,1]^N\}$, 
we obtain 
\[
\sum_{1 \le i \le I(j,m)} a_{i,j,m}  \le  \frac{\max \{ c_{\sigma_k}(\lambda_\tau) \mid \tau \in [0,1]^N \}}{\delta}. 
\]
Thus, up to subsequence, 
we may assume that 
$I(j,m)$ and $a_{i,j,m}\,(1 \le i \le I(j,m))$ 
do not depend on $m$. 
Also, for each $(i, j)$, 
a certain subsequence of $(\gamma_{i,j,m})_{m \ge 1}$ 
weakly converges to $\nu \gamma$ for some $\nu \in \Z_{>0}$ and $\gamma \in \mca{P}_\emb(Y, \lambda_{\tau^\infty})$. 

Hence, up to subsequence, we may assume that 
$\lim_{m \to \infty} \sum_{1 \le j \le N+1} a_j C_{j,m}$
exists as an element of $\mca{C}(Y, \lambda_{\tau^\infty})$. 
Setting 
\[ 
C_\infty:= \sqrt{\frac{\vol(Y, \lambda_{\tau^\infty})}{2k}} \lim_{m \to \infty} \sum_{1 \le j \le N+1} a_j C_{j,m}, 
\] 
we obtain 
\[ 
\bigg\lvert C_\infty(\psi_i \lambda_{\tau^\infty}) - \int_Y \psi_i \lambda_{\tau_\infty} \wedge d \lambda_{\tau^\infty} \bigg\rvert 
< 
\sqrt{\frac{c_0}{2}} \cdot  (c_1c_3 + c_2 (c_0 + \sqrt{2c_0})) < \ep
\]
for any $i \in \{1, \ldots, N\}$. 
This completes the proof. 
\qed

\section{Proof of Lemmas \ref{Localnondegenerate} and \ref{Familynondegenerate}}

In this section, we give a proof of Lemma \ref{Familynondegenerate}, 
and note that Lemma \ref{Localnondegenerate} is a consequence of Lemma \ref{180422_1}, 
which appears in the last part of this section. 

The proof of Lemma \ref{Familynondegenerate} consists of four steps. 

\textbf{Step 1.} 
Proof of Lemma \ref{Familynondegenerate} assuming Lemma \ref{180420_1}. 

Let $\pi: Y \times [0,1]^N \to Y$ denote the projection. 
For each positive integer $l$, 
let $\Lambda^N_{C^l}(Y, \xi)$ denote the set of $C^l$-section $\lambda$ of $\pi^*(TY)$, 
such that $\lambda_\tau:= \lambda(\tau, \cdot \,) \in \Lambda_{C^l}(Y, \xi)$ for every $\tau \in [0,1]^N$. 
The space $\Lambda^N_{C^l}(Y, \xi)$ is equipped with the natural $C^l$-topology. 
Now we prove that Lemma \ref{Familynondegenerate} is reduced to Lemma \ref{180420_1} below. 

\begin{lem}\label{180420_1}
When $l$ is sufficiently large, 
\[
\mathrm{measure} (\{ \tau \in [0,1]^N \mid \text{$\lambda_\tau$ is nondegenerate} \}) = 1 
\] 
for generic $\lambda \in \Lambda^N_{C^l}(Y, \xi)$. 
\end{lem}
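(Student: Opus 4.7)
The plan is to prove Lemma \ref{180420_1} via a universal moduli space argument combined with Sard--Smale, adapted to the parametric setting. The strategy mirrors the classical proof that a generic contact form on $Y$ is nondegenerate, but with an extra $N$-dimensional parameter $\tau \in [0,1]^N$. This raises the dimension of each moduli space by $N$, while leaving the codimension of the degeneracy condition equal to $1$; so the ``bad'' $\tau$-set, being the image of a lower-dimensional manifold, will have measure zero.

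Fix $T, M \in \Z_{>0}$ and define the universal parametric moduli space
\[
\widetilde{\mca{M}}_{T,M} := \{ (\lambda, \tau, \gamma) : \lambda \in \Lambda^N_{C^l}(Y,\xi),\ \tau \in [0,1]^N,\ \gamma \in \mca{P}(Y,\lambda_\tau),\ T_\gamma \le T,\ \mathrm{mult}(\gamma) \le M \},
\]
modeled on a Banach manifold of parametrized loops of suitable Sobolev regularity. Viewing $\dot\gamma - T_\gamma R_{\lambda_\tau}(\gamma) = 0$ as a Fredholm section of an appropriate Banach bundle, the key transversality step is that a bump-function perturbation of $\lambda$ supported in a small tubular neighborhood of $\mathrm{Im}(\gamma)$ can generate an arbitrary first-order variation of $R_{\lambda_\tau}|_\gamma$ transverse to $\dot\gamma$; this makes the universal linearization surjective. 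For $l$ large, $\widetilde{\mca{M}}_{T,M}$ is then a Banach manifold, and the projection $\pi$ to $\Lambda^N_{C^l}(Y,\xi)$ is Fredholm of index $N+1$ (the $N$ from $\tau$ and $1$ from the $S^1$-reparametrization). Inside $\widetilde{\mca{M}}_{T,M}$, the degenerate locus $\widetilde{\mca{M}}_{T,M}^{\mathrm{deg}}$ cut out by $\det(I - \rho(\gamma,\lambda_\tau)) = 0$ is shown by a similar perturbation argument to be a Banach submanifold of codimension $1$.

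Applying Sard--Smale to both $\pi$ and its restriction to $\widetilde{\mca{M}}_{T,M}^{\mathrm{deg}}$, there is a residual subset $\mca{R}_{T,M} \subset \Lambda^N_{C^l}(Y,\xi)$ such that for $\lambda \in \mca{R}_{T,M}$ the fiber $\mca{M}_{T,M}^{\mathrm{deg}}(\lambda)$ is a finite-dimensional $C^1$-manifold of dimension $N$; modding out the $S^1$-reparametrization action lowers this to $N-1$. The image under $(\tau,\gamma) \mapsto \tau$ is therefore of measure zero in $[0,1]^N$ by the finite-dimensional Sard theorem. Setting $\mca{R} := \bigcap_{T,M} \mca{R}_{T,M}$, which is still residual, every $\lambda \in \mca{R}$ has the property that the set of $\tau$ admitting any degenerate periodic orbit is a countable union of measure-zero sets, hence of measure zero, yielding the lemma.

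The main obstacle is the functional-analytic setup: one must simultaneously arrange transversality of the universal section and of the degeneracy equation while handling multiply covered orbits, whose $\Z/m$-stabilizers turn the quotient by $S^1$ into an orbifold (though this does not affect the measure estimate). Each application of Sard--Smale forces a finite loss of regularity, so $l$ must exceed a constant depending on $N$ and the Fredholm indices; this explains the ``sufficiently large $l$'' hypothesis in the statement.
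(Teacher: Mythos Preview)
Your overall strategy --- universal parametric moduli space, Sard--Smale, then finite-dimensional Sard on the projection to $[0,1]^N$ --- is exactly the paper's. The paper also uses the free $S^1$-action in the same way you do, to conclude that a regular value of the projection from the degenerate locus is actually not in its image.

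There is, however, a genuine gap in the step where you cut out the degenerate locus by the single scalar equation $\det(I-\rho)=0$. For $A\in\mathrm{Sp}(2,\R)$ one has $\det(I-A)=2-\mathrm{tr}\,A$, and $\mathrm{tr}$ fails to be a submersion at $A=I$: tangent vectors are $AX$ with $X\in\mathfrak{sp}(2)$, and $\mathrm{tr}(IX)=\mathrm{tr}\,X=0$. Near $I$ the level set $\{\mathrm{tr}=2\}$ is a cone $\{a^2+bc=0\}$, not a smooth hypersurface, so your $\widetilde{\mca{M}}^{\mathrm{deg}}_{T,M}$ is not a codimension-$1$ Banach submanifold without further work. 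The problem is worse for multiple covers: if $\gamma=\gamma_0^m$ and $\rho(\gamma_0)$ is a rotation by $2\pi j/m$, then $\rho(\gamma)=I$, and the differential of $\rho(\gamma_0)\mapsto\mathrm{tr}(\rho(\gamma_0)^m)$ vanishes identically, so the bump-function perturbations you invoke (which move $\rho(\gamma_0)$ freely) do \emph{not} move $\det(I-\rho(\gamma))$ to first order. Thus transversality of your degeneracy section genuinely fails at these points.

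The paper's remedy is to work only with \emph{simple} orbits and to replace the single equation $\det(I-\rho)=0$ by a stratification according to conjugacy class in $\mathrm{Sp}(2,\R)$: for each $\theta\in(0,2\pi)\setminus\{\pi\}$ the locus $\mca{M}_\theta=\{\rho\sim R_\theta\}$ is a smooth codimension-$1$ submanifold, the parabolic classes $\mca{M}_{II},\mca{M}_{IV}$ are codimension $1$, and the singular points $\{\pm I\}$ give codimension-$3$ strata $\mca{M}_I,\mca{M}_{III}$. Degeneracy of the $m$-fold cover of a simple orbit $\gamma_0$ is then encoded as $\rho(\gamma_0)\in\mca{M}_{2\pi j/m}\cup\mca{M}_I\cup\mca{M}_{II}$, and transversality of the section $\rho$ to each stratum follows from a local computation (the paper's Lemma~\ref{180422_1}) showing that perturbations of $\lambda$ move $\rho(\gamma_0)$ submersively in $\mathrm{Sp}(2,\R)$. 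Your argument becomes correct once you insert this stratification in place of the scalar equation.
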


Let us prove Lemma \ref{Familynondegenerate} assuming Lemma \ref{180420_1}. 
For any positive real numbers $T$ and $\delta$, 
let $\Lambda(T, \delta)$ denote the set of $\lambda \in \Lambda^N(Y, \xi)$ such that 
\[
\mathrm{measure}(\{ \tau \in [0,1]^N \mid \text{Any $\gamma \in \mca{P}(Y, \lambda_\tau)$ with $T_\gamma \le T$ is nondegenerate} \})  > 1-\delta. 
\] 
Then $\Lambda(T, \delta)$ is open and dense in $\Lambda^N(Y, \xi)$ with the $C^\infty$-topology; 
openness is easy and denseness follows from Lemma \ref{180420_1}. 
Let us take an increasing sequence $(T_n)_{n \ge 1}$ and a decreasing sequence $(\delta_n)_{n \ge 1}$ such that $\lim_{n \to \infty} T_n = \infty$, $\lim_{n \to \infty} \delta_n = 0$. 
If $\lambda$ is in the residual set $\bigcap_{n \ge 1} \Lambda(T_n, \delta_n)$, 
then $\{ \tau \in [0,1]^N \mid \text{$\lambda_\tau$ is nondegenerate} \}$ is of full measure.

\textbf{Step 2.} Proof of Lemma \ref{180420_1} assuming Lemmas \ref{180421_1} and \ref{180421_2}. 

\begin{lem}\label{180421_1} 
Let us define
\[
\mca{M}:= \{(\lambda, \tau, \gamma) \mid \lambda \in \Lambda^N_{C^l}(Y, \xi), \, \tau \in [0,1]^N, \, \gamma \in \mca{P}_\inj(Y, \lambda_\tau) \}. 
\]
Then $\mca{M}$
has a structure of a Banach manifold of class $C^{l-1}$, 
such that the projection map 
$\mca{M} \to \Lambda^N_{C^l}(Y, \xi)$ is a $C^{l-1}$-Fredholm map of index $N+1$. 
\end{lem}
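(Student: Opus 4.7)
The plan is to realize $\mca{M}$ as the zero locus of a $C^{l-1}$-Fredholm section of a Banach bundle, verify transversality by perturbing $\lambda$ in a tubular neighborhood of $\gamma(S^1)$, and then compute the Fredholm index directly from the linearized Reeb equation on the circle.

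Let $W^l_{\inj}$ denote the open subset of $C^l(S^1, Y)$ consisting of injective loops, with $S^1 := \R/\Z$. Form the Banach manifold
\[
\mca{B} := \Lambda^N_{C^l}(Y,\xi) \times [0,1]^N \times W^l_{\inj} \times \R_{>0}
\]
and the Banach bundle $\mca{E} \to \mca{B}$ whose fiber over $(\lambda,\tau,\gamma,T)$ is $C^{l-1}(S^1, \gamma^*TY)$. Consider the section
\[
\mca{F}(\lambda,\tau,\gamma,T) := \dot\gamma - T\,R_{\lambda_\tau}(\gamma).
\]
This is of class $C^{l-1}$ because $R_\lambda$ involves $d\lambda$ and so loses one derivative in $\lambda$, which is the source of the $C^{l-1}$ regularity in the conclusion. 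The reparametrization $s \mapsto \gamma(s/T)$ identifies $\mca{F}^{-1}(0)$ bijectively with $\mca{M}$.

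For transversality, one must show $D\mca{F}$ is surjective onto $C^{l-1}(S^1,\gamma^*TY)$ at every zero. A tangent vector at $\lambda \in \Lambda^N_{C^l}(Y,\xi)$ has the form $\delta\lambda = g\lambda$ for some $g \in C^l([0,1]^N \times Y, \R)$, and linearizing $\lambda(R)=1$ and $d\lambda(R,\cdot)=0$ yields
\[
\lambda_\tau(\delta R_{\lambda_\tau}) = -g,\qquad d\lambda_\tau(\delta R_{\lambda_\tau},\,\cdot)|_\xi = dg|_\xi.
\]
Using the splitting $TY = \R\, R_{\lambda_\tau} \oplus \xi$ and the nondegeneracy of $d\lambda_\tau|_\xi$, one sees that the restriction $(\delta R_{\lambda_\tau})|_\gamma$ is entirely determined by the $1$-jet of $g(\tau,\cdot)$ along $\gamma(S^1)$; conversely, any prescribed $1$-jet along $\gamma(S^1)$ can be realized by a function $g$ supported in a tubular neighborhood of $\gamma(S^1)$, thanks to the injectivity of $\gamma$. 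Hence $D_\lambda \mca{F}$ alone already surjects onto $C^{l-1}(S^1,\gamma^*TY)$, and the implicit function theorem endows $\mca{F}^{-1}(0) = \mca{M}$ with the structure of a $C^{l-1}$-Banach submanifold of $\mca{B}$.

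Finally, at a point $(\lambda,\tau,\gamma,T) \in \mca{M}$ the kernel and cokernel of $d\pi$ are naturally isomorphic to those of the restricted linearization
\[
L(\delta\tau,\delta\gamma,\delta T) := \nabla_t\delta\gamma - T\,DR_{\lambda_\tau}(\gamma)\cdot\delta\gamma - \delta T \cdot R_{\lambda_\tau}(\gamma) - T\sum_{i=1}^N \delta\tau_i\,(\partial_{\tau_i} R_{\lambda_\tau})(\gamma),
\]
from $\R^N \oplus T_\gamma W^l_{\inj} \oplus \R$ to $C^{l-1}(S^1,\gamma^*TY)$, where $\nabla_t$ denotes the covariant derivative induced by an auxiliary connection on $Y$. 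The principal operator $\nabla_t - T\,DR_{\lambda_\tau}(\gamma)$ is a first-order linear ODE on $S^1$, hence Fredholm of index $0$ between $C^l$ and $C^{l-1}$ sections of $\gamma^*TY$; adjoining the $(N+1)$-dimensional summand corresponding to $(\delta\tau,\delta T) \in \R^{N+1}$ raises the index by $N+1$, yielding Fredholm index $N+1$. The main obstacle is the transversality step, and specifically the bookkeeping needed to arrange the regularity of $g$ to match that of the prescribed $1$-jet along $\gamma$; injectivity of $\gamma$ is essential so that the pointwise construction of $g$ can be carried out in a tubular neighborhood without global obstruction.
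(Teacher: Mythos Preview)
Your strategy---realizing $\mca{M}$ as the zero set of a section of a Banach bundle and reading off the index from the linearized ODE---is standard and the index computation at the end is correct, but the execution has two regularity gaps. First, with $\gamma\in C^l$ and target $C^{l-1}$, the section $\mca{F}$ is \emph{not} of class $C^{l-1}$, and in fact not even $C^1$ in $\gamma$: the $\gamma$-derivative of $R_{\lambda_\tau}\circ\gamma$ involves $DR_{\lambda_\tau}$, which is only $C^{l-2}$ since $R_{\lambda_\tau}$ is $C^{l-1}$, so $D_\gamma\mca{F}$ lands in $C^{l-2}$ rather than $C^{l-1}$. (This is the standard loss in the omega lemma: the map $\gamma\mapsto R\circ\gamma$ from $C^k$ to $C^k$ is only $C^{r-k}$ when $R\in C^r$.) The usual repair is to take $\gamma$ in a low-regularity space such as $C^1$ or $W^{1,2}$, so that composition with the $C^{l-1}$ vector field $R_{\lambda_\tau}$ becomes a $C^{l-1}$ map, and then use ODE bootstrapping to recover regularity of the actual zeros. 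Second, the assertion that $D_\lambda\mca{F}$ alone surjects onto the target is false for the same kind of reason: the Reeb component of $(\delta R_{\lambda_\tau})|_\gamma$ equals $-g|_\gamma$, which for $g\in C^l$ is a $C^l$ function along $\gamma$, so you only reach sections whose Reeb component has this extra regularity. The correct argument combines the dense image of $D_\lambda\mca{F}$ with the closed finite-codimensional image of the Fredholm operator $D_{(\gamma,T)}\mca{F}$.

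The paper sidesteps all of this by a more elementary route with a finite-dimensional target. It considers the $C^{l-1}$ evaluation map
\[
E:\ \Lambda^N_{C^l}(Y,\xi)\times Y\times[0,1]^N\times\R_{>0}\to Y^2,\qquad (\lambda,y,\tau,T)\mapsto\bigl(y,\ \varphi^T_{R_{\lambda_\tau}}(y)\bigr),
\]
restricts to the open set where the orbit through $y$ is injective on $[0,T/2]$, and identifies $\mca{M}$ with $E^{-1}(\Delta_Y)$. Transversality to the diagonal $\Delta_Y\subset Y^2$ is then a finite-rank condition, checked directly from the formula $R_{e^f\lambda}=e^{-f}(R_\lambda-X_{df})$, and the Fredholm index $N+1$ falls out of the dimension count $\dim Y + N + 1 - \mathrm{codim}\,\Delta_Y = 3+N+1-3$. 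This avoids both the composition-regularity issue and the need to argue surjectivity onto an infinite-dimensional space.
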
 

For any contact from $\lambda$ and $\gamma \in \mca{P}(Y, \lambda)$, 
recall that $\rho(\gamma, \lambda) \in \mathrm{Aut}(\xi_{\gamma(0)})$ denotes the linearized return map 
of the flow generated by $R_\lambda$. 

\begin{lem}\label{180421_2} 
For any $\theta \in (0, 2\pi) \setminus \{\pi\}$, 
\[ 
\mca{M}_\theta: = \{ (\lambda, \tau, \gamma) \in \mca{M} \mid \rho(\gamma, \lambda_\tau) \sim \begin{pmatrix} \cos \theta & - \sin \theta \\ \sin \theta & \cos \theta \end{pmatrix} \} 
\] 
is a submanifold of $\mca{M}$ of codimension $1$. 
Moreover, setting 
\begin{align*} 
\mca{M}_I &:=\{ (\lambda, \tau, \gamma) \in \mca{M} \mid \rho(\gamma, \lambda_\tau)  \sim \begin{pmatrix} 1 & 0 \\ 0 &1 \end{pmatrix} \}, \\
\mca{M}_{II} &:=\{ (\lambda, \tau, \gamma) \in \mca{M} \mid \rho(\gamma, \lambda_\tau)  \sim \begin{pmatrix} 1 & 1 \\ 0 & 1 \end{pmatrix} \},  \\
\mca{M}_{III}&:= \{ (\lambda, \tau, \gamma) \in \mca{M} \mid \rho(\gamma, \lambda_\tau)  \sim \begin{pmatrix} -1 & 0 \\ 0 & -1 \end{pmatrix} \}, \\
\mca{M}_{IV}&:= \{ (\lambda, \tau, \gamma) \in \mca{M} \mid \rho(\gamma, \lambda_\tau)  \sim \begin{pmatrix} -1 & 1 \\ 0 & -1 \end{pmatrix} \}, 
\end{align*} 
$\mca{M}_I$ and $\mca{M}_{III}$ are $C^{l-2}$ submanifolds of $\mca{M}$ of codimension $3$, 
and $\mca{M}_{II}$ and $\mca{M}_{IV}$ are $C^{l-2}$ submanifolds of $\mca{M}$ of codimension $1$. 
\end{lem}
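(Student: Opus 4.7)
My plan is to locally identify $\mca{M}$ with a bundle over an open subset of $\mathrm{Sp}(2, \R)$ via the linearized return map, and then pull back the conjugacy-class stratification of $\mathrm{Sp}(2, \R)$.

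Fix $(\lambda_0, \tau_0, \gamma_0) \in \mca{M}$. The implicit-function argument underlying Lemma \ref{180421_1} shows that the simple orbit $\gamma_0$ persists uniquely under small perturbations of $(\lambda_0, \tau_0)$; let $\gamma(\lambda, \tau)$ denote the nearby orbit. Choose a symplectic trivialization of $\xi|_{\gamma_0}$ with respect to $d\lambda_{0, \tau_0}$, and extend it to a smoothly varying family of symplectic trivializations along $\gamma(\lambda, \tau)$. This produces a $C^{l-2}$-map
\[
\Psi : \mca{M} \to \mathrm{Sp}(2, \R), \qquad (\lambda, \tau, \gamma) \mapsto \rho(\gamma, \lambda_\tau),
\]
defined in a neighborhood of $(\lambda_0, \tau_0, \gamma_0)$; the loss $l \to l-2$ reflects one derivative to produce the Reeb flow and another to linearize it.

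The key step is to show that $\Psi$ is a submersion, which I plan to establish by varying $\lambda$ alone. Pick a point $p \in \mathrm{Im}\,\gamma_0$ and a small flowbox $U$ about $p$ through which the Reeb flow of $\lambda_{0, \tau_0}$ passes exactly once along $\gamma_0$. Conformal perturbations $\lambda_{0, \tau_0} \mapsto e^{sg} \lambda_{0, \tau_0}$ with $g \in C^l(Y, \R)$ supported in $U$ preserve $\xi$ and hence stay in $\Lambda^N_{C^l}(Y, \xi)$. A direct first-variation computation shows that the induced infinitesimal change in the Poincaré return map on a transverse disk $\Sigma$ at $p$ is determined by the second-order jet of $g|_\Sigma$ at $p$ (the first-order jet contributes to reparametrizations of the orbit and does not affect $\rho$). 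Allowing this Hessian to range over all symmetric $2 \times 2$ matrices gives a $3$-parameter family of perturbations whose images span $\mf{sp}(2, \R)$, so $d\Psi$ is surjective at $(\lambda_0, \tau_0, \gamma_0)$.

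Given submersivity, the five strata of $\mca{M}$ arise as preimages of conjugacy classes in $\mathrm{Sp}(2, \R) = \mathrm{SL}(2, \R)$, so their codimensions in $\mca{M}$ equal the codimensions of the corresponding classes in $\mathrm{Sp}(2, \R)$: for $\theta \in (0, 2\pi) \setminus \{\pi\}$, the elliptic class of $\begin{pmatrix} \cos\theta & -\sin\theta \\ \sin\theta & \cos\theta \end{pmatrix}$ has centralizer $\mathrm{SO}(2)$ and thus codimension $1$; the singleton classes $\{\pm I\}$ have codimension $3$; and the non-identity unipotent classes of $\begin{pmatrix} 1 & 1 \\ 0 & 1 \end{pmatrix}$ and $\begin{pmatrix} -1 & 1 \\ 0 & -1 \end{pmatrix}$ each have $1$-dimensional centralizer and thus codimension $1$. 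Pulling these back under $\Psi$ yields the asserted $C^{l-2}$-submanifold structure on $\mca{M}_\theta$, $\mca{M}_{II}$, $\mca{M}_{IV}$ (codimension $1$) and $\mca{M}_I$, $\mca{M}_{III}$ (codimension $3$). The main obstacle is the controllability statement in the previous paragraph — verifying that the three-parameter family of Hessians of $g|_\Sigma$ at $p$ really produces three linearly independent tangent vectors at $\rho_0 \in \mathrm{Sp}(2, \R)$ — but because only a single embedded orbit is involved and the perturbation is local, this reduces to a short explicit computation rather than any global transversality argument.
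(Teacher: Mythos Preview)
Your proposal is correct and follows essentially the same route as the paper: the paper defines the return map as a $C^{l-2}$ section $s$ of an $\mathrm{Sp}(2,\R)$-bundle $\mca{E}\to\mca{M}$, reduces via the contact Darboux theorem to the local model $\lambda=x\,dy+dz$ on $\R^3$, and then (Lemma~\ref{180422_1}) checks submersivity by computing that under conformal perturbations $e^f\lambda$ with $f$ vanishing to first order along the orbit the derivative of the linearized flow is $\int \begin{pmatrix}\partial_{xy}h&\partial_{yy}h\\-\partial_{xx}h&-\partial_{xy}h\end{pmatrix}dt$, exactly the Hessian computation you describe. One small caveat: your sentence ``the simple orbit $\gamma_0$ persists uniquely under small perturbations of $(\lambda_0,\tau_0)$'' is not what Lemma~\ref{180421_1} gives (at a degenerate orbit the fiber of $\mca{M}\to\Lambda^N_{C^l}\times[0,1]^N$ need not be locally a graph), but you do not actually need this---$\Psi$ is defined directly on the manifold $\mca{M}$, and for the submersion you restrict to perturbations $g$ vanishing to second order on $\gamma_0$, which keep $\gamma_0$ a Reeb orbit and hence give honest tangent vectors to $\mca{M}$ at $(\lambda_0,\tau_0,\gamma_0)$.
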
 

For any $\lambda \in \Lambda^N_{C^l}(Y, \xi)$, we set 
$\mca{M}(\lambda):= \{ (\tau, \gamma) \mid (\lambda, \tau, \gamma) \in \mca{M}\}$. 
We also define 
$\mca{M}_\theta(\lambda), \mca{M}_I(\lambda), \ldots, \mca{M}_{IV}(\lambda)$ 
in a similar manner. 

Now we prove Lemma \ref{180420_1} assuming Lemmas \ref{180421_1} and \ref{180421_2}. 
By Sard-Smale theorem, 
if $l \ge N+3$, 
a generic element of $\Lambda^N_{C^l}(Y, \xi)$ is a regular value of projection maps from 
$\mca{M}$, $\mca{M}_I, \ldots, \mca{M}_{IV}$ 
and $\mca{M}_{2\pi j/n} \, (n \in \Z_{>2}, \, j \in \{1, \ldots, n-1\} \setminus \{n/2\})$ 
to $\Lambda^N_{C^l}(Y, \xi)$. 
If $\lambda = (\lambda_\tau)_{\tau \in [0,1]^N}$ 
is such a regular value, then 
$\mca{M}(\lambda)$ has a structure of $C^{l-1}$ manifold of dimension $N+1$, 
and
$\mca{M}_I(\lambda), \ldots, \mca{M}_{IV}(\lambda)$ 
and
$\mca{M}_{2\pi j/n}(\lambda)$ are its 
$C^{l-2}$ submanifolds of codimension at least $1$. 

By (finite-dimensional) Sard theorem, 
if $l \ge 3$, 
then the union of critical values of projection maps from 
$\mca{M}_I(\lambda), \ldots, \mca{M}_{IV}(\lambda)$ 
 and $\mca{M}_{2\pi j/n}(\lambda)$ to $[0,1]^N$ 
is a measure zero set. 
If $\tau$ is a regular value of all these maps, 
then actually $\tau$ is not in the image of these maps
(this is because $\mca{M}$ admits a free $S^1$ action by rotating parameters, 
preserving $\mca{M}_I, \ldots, \mca{M}_{IV}, \mca{M}_\theta$ and the projection map to $\Lambda^N_{C^l}(Y, \xi) \times [0,1]^N$)
which means that $\lambda_\tau$ is nondegenerate. 

\textbf{Step 3.} 
Proof of Lemma \ref{180421_1}. 

For any contact form $\lambda$ on $Y$, 
let $(\varphi^t_{R_\lambda})_{t \in \R}$ denote the flow generated by $R_\lambda$. 
Let us consider a map 
\[ 
E: \Lambda^{N}_{C^l}(Y, \xi) \times Y \times [0,1]^N \times \R_{>0} \to Y^2; \quad (\lambda, y, \tau, T) \mapsto (y, \ph^T_{R_{\lambda_\tau}} (y)). 
\] 
It is easy to check that $E$ is of class $C^{l-1}$. 
Let $O$ denote the open set in the source of $E$, which consists of 
$(\lambda, y, \tau, T)$ such that 
\[ 
0 \le \theta < \theta' \le T/2 \implies \ph^\theta_{R_{\lambda_\tau}}(y) \ne \ph^{\theta'}_{R_{\lambda_\tau}}(y). 
\] 
Let $\Delta_Y:= \{(y,y) \mid y \in Y \}$. 
Then $(E|_O)^{-1}(\Delta_Y)$ is naturally identified with $\mca{M}$. 
Moreover, using the formula 
$R_{e^f\lambda} = e^{-f}(R_{\lambda} - X_{df})$
(here $X_{df}$ is a section of $\xi$ defined by $i_{X_{df}}d\lambda = - df$), 
it is easy to check that $E|_O: O \to Y^2$ is transversal to $\Delta_Y$. 
Thus $(E|_O)^{-1}(\Delta_Y)$ is a $C^{l-1}$-submanifold of 
$\Lambda^N_{C^l}(Y, \xi) \times Y \times [0,1]^N \times \R_{>0}$, 
and the projection $(E|_O)^{-1}(\Delta_Y) \to \Lambda^N_{C^l}(Y, \xi)$ is a Fredholm map of index $N+1$. 
This completes the proof of Lemma \ref{180421_1}.

\textbf{Step 4.} 
Proof of  Lemma \ref{180421_2}. 

Let $\mca{E}$ denote the total space of a $\mathrm{Sp}(2: \R)$-bundle on $\mca{M}$ 
defined by 
\[ 
\mca{E}(\lambda, \tau, \gamma):=  \mathrm{Aut}( \xi_{\gamma(0)},  d\lambda_\tau) \qquad (\lambda \in \Lambda^N_{C^l}(Y, \xi), \, \tau \in [0,1]^N, \, \gamma \in \mca{P}_\inj(Y, \lambda_\tau)).
\]
For any $\theta \in (0,2 \pi) \setminus \{\pi\}$, we define a subspace $\mca{E}_\theta$ of $\mca{E}$ by 
\[ 
\mca{E}_\theta (\lambda, \tau, \gamma) := \{ R \in \mathrm{Aut}(\xi_{\gamma(0)}, d\lambda_\tau) \mid R \sim \begin{pmatrix} \cos \theta & - \sin \theta \\ \sin \theta & \cos \theta \end{pmatrix} \}. 
\] 
We also define $\mca{E}_I, \ldots, \mca{E}_{IV} \subset \mca{E}$ in a similar manner. 

Let $s$ be a section of $\mca{E} \to \mca{M}$ defined by 
$s(\lambda, \tau, \gamma):= \rho(\gamma, \lambda_\tau)$. 
Note that $s$ is of class $C^{l-2}$. 
To prove Lemma \ref{180421_2}, it is sufficient to check that $s$ is transversal to $\mca{E}_\theta \,( \theta \in (0, 2\pi) \setminus \{\pi\})$ 
and $\mca{E}_I, \ldots, \mca{E}_{IV}$. 
This follows from Lemma \ref{180422_1} below
and contact Darboux theorem. 

To state Lemma \ref{180422_1}, 
let $\lambda:= x dy + dz$ be the standard contact form on $\R^3$, and 
$\xi$ be the associated contact distribution. 
Also, let $U$ be an open neighborhood of $(0, 0, 0)$ in $\R^3$ such that 
$(0,0,-1)$ and $(0,0,1)$ are not in $U$. 
We set
\[
\mca{F}_U := \{ f \in C^\infty(\R^3) \mid \supp f \subset U, f(0,0,z) = 0, \, df(0,0,z) = 0 \,(\forall z \in \R) \}. 
\] 
For any $f \in \mca{F}_U$, let 
$A(f):\xi_{(0,0,-1)} \to \xi_{(0,0,1)}$ 
denote the linearization of the time-$2$ map 
of the flow generated by the Reeb vector field $R_{e^f \lambda}$. 

\begin{lem}\label{180422_1} 
The map 
\[ 
A: \mca{F}_U \to \{ \varphi \in \mathrm{Hom}(\xi_{(0,0,-1)}, \xi_{(0,0,1)}) \mid \varphi^* d\lambda = d\lambda \}; \quad f \mapsto A(f) 
\]
is submersive at $0 \in \mca{F}_U$. 
\end{lem}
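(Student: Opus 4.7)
The plan is to show that the differential $dA|_0 \colon \mca{F}_U \to \mathfrak{sp}(2,\R)$ is surjective onto the three-dimensional tangent space to the target at $A(0) = \mathrm{id}$. Since $d\lambda = dx\wedge dy$, one has $R_\lambda = \partial_z$ globally, so the unperturbed time-$2$ flow is just the translation $z \mapsto z+2$, and $A(0) = \mathrm{id}$ under the natural identifications $\xi_{(0,0,\pm 1)} = \mathrm{span}(\partial_x,\partial_y)$. Using the linear isomorphism $\mathrm{Sym}_2(\R) \to \mathfrak{sp}(2,\R)$, $S \mapsto JS$, where $J = \bigl(\begin{smallmatrix}0 & 1 \\ -1 & 0\end{smallmatrix}\bigr)$, it will suffice to realize every $JS$ with $S$ symmetric as $dA|_0(f)$ for some $f \in \mca{F}_U$.

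First I would compute $X_{df}$ explicitly. Solving $i_{X_{df}}d\lambda = -df$ on $\xi = \mathrm{span}(\partial_x, \partial_y - x\partial_z)$ against $d\lambda = dx\wedge dy$ yields
\[
X_{df} = (-\partial_y f + x\partial_z f)\,\partial_x + (\partial_x f)(\partial_y - x\partial_z).
\]
Because $f \in \mca{F}_U$ vanishes to first order along the $z$-axis, $X_{df}$ vanishes there as well, so $R_{e^{sf}\lambda}|_{(0,0,z)} = \partial_z$; in particular the $z$-axis remains a Reeb orbit of $e^{sf}\lambda$ whose time-$2$ map still carries $(0,0,-1)$ to $(0,0,1)$ for all small $s$.

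Next, to compute $dA|_0(f)$ I would rewrite the Reeb flow near the orbit using $z$ as time, $dx/dz = R_x/R_z$ and $dy/dz = R_y/R_z$, and expand to first order in $s$ and in the transverse variables $(x,y)$. The hypotheses $f(0,0,z) = 0$ and $df(0,0,z) = 0$ kill all contributions of order $\le 1$ in $(x,y)$, leaving only the Hessian; a direct calculation then yields the variational equation
\[
\frac{d}{dz}\begin{pmatrix} x \\ y \end{pmatrix} = s\,J\,H(z) \begin{pmatrix} x \\ y \end{pmatrix} + O(s^2), \qquad H(z) := \mathrm{Hess}_{(x,y)}\,f(0,0,z),
\]
and integrating over $[-1,1]$ yields $dA|_0(f) = J \int_{-1}^{1} H(z)\,dz \in \mathfrak{sp}(2,\R)$.

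Finally, for surjectivity, given any symmetric $S$ I would take the quadratic form $q$ on $\R^2$ with Hessian $S$, a function $\phi \in C^\infty_c(-1,1)$ with $\int \phi = 1$, and a cutoff $\chi \in C^\infty_c(U)$ identically $1$ on a neighborhood of $\{(0,0,z) \mid z \in \supp \phi\}$. Then $f := \phi(z)\,q(x,y)\,\chi(x,y,z)$ lies in $\mca{F}_U$ and has $H(z) = \phi(z)S$ along the orbit, so $dA|_0(f) = JS$. The proof is essentially a direct computation and I anticipate no serious obstacle; the care required lies in expanding $R_{e^{sf}\lambda}$ to first order in $s$ and $(x,y)$ simultaneously and invoking both $f(0,0,z) = 0$ and $df(0,0,z) = 0$ at the right moments to discard irrelevant higher-order terms.
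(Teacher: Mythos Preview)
Your proposal is correct and follows essentially the same approach as the paper: the paper simply records the formula $(dA)_0(h) = \int_{-1}^1 \begin{pmatrix} \partial_{xy}h & \partial_{yy}h \\ -\partial_{xx}h & -\partial_{xy}h \end{pmatrix}(0,0,t)\,dt$, which is exactly your $J\int_{-1}^1 H(z)\,dz$, and then observes that surjectivity follows. You have supplied the details that the paper omits---the variational-equation derivation and the explicit construction $f = \phi(z)q(x,y)\chi$ realizing any $JS$---but the underlying argument is the same.
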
 
\begin{proof}
By simple computations, one can check
\[
(dA)_0(h) = \int_{-1}^1 \begin{pmatrix} \partial_{xy}h & \partial_{yy}h \\ -\partial_{xx}h & -\partial_{xy}h  \end{pmatrix}  (0,0,t) \, dt.
\]
Now the conclusion of the lemma easily follows from this formula. 
\end{proof} 

\begin{rem} 
Lemma \ref{Localnondegenerate} 
is an immediate consequence of Lemma \ref{180422_1}. 
\end{rem} 

\section{Proof of Lemma \ref{DerivativeOfCapacity}} 

We first prove the following simple lemma. 

\begin{lem}\label{DerivativeOfAction}
Let $\lambda  \in \Lambda^N(Y, \xi)$, 
$\tau^0 \in (0,1)^N$ and 
$\gamma \in \mca{P}_\emb(\lambda_{\tau^0})$ which is nondegenerate. 
Then there exists a neighborhood $U$ of $\tau^0$ in $(0,1)^N$ 
and a smooth family of embedded Reeb orbits $(\gamma_\tau)_{\tau \in U}$
such that $\gamma_\tau \in \mca{P}_\emb(\lambda_\tau)$ for every $\tau \in U$, 
$\gamma_{\tau^0}=\gamma$, and 
\[ 
\partial_{\tau_i} \bigg( \int_{\gamma_\tau} \lambda_\tau \bigg) (\tau^0) = 
\int_{\gamma} \partial_{\tau_i} \lambda_\tau (\tau^0) \qquad(\forall i \in \{1, \ldots, N\}). 
\]
\end{lem}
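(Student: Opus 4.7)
The plan is to prove the lemma in two parts: first construct the smooth family $(\gamma_\tau)_{\tau \in U}$ by the implicit function theorem using nondegeneracy, and then establish the derivative formula by a Stokes-type argument exploiting the identity $i_{R_\lambda}\, d\lambda = 0$.

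\textbf{Existence of the family.} I would fix a small embedded Poincar\'e section $D \subset Y$ through $p := \gamma(0)$ transverse to $R_{\lambda_{\tau^0}}(p)$. Since $\lambda$ is a smooth section of $\pi^*(T^*Y)$, the Reeb vector field $R_{\lambda_\tau}$ depends smoothly on $\tau$, so the first-return map $P_\tau: D \to D$ is defined and smooth in $(q, \tau)$ on a neighborhood of $(p, \tau^0)$, with $P_{\tau^0}(p) = p$. The nondegeneracy assumption says exactly that $dP_{\tau^0}|_p = \rho(\gamma, \lambda_{\tau^0})$ does not have $1$ as an eigenvalue, so $dP_{\tau^0}|_p - \id$ is invertible on $T_p D$. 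Applying the implicit function theorem to $(q, \tau) \mapsto P_\tau(q) - q$ produces a neighborhood $U$ of $\tau^0$ and a smooth map $\tau \mapsto p_\tau \in D$ with $p_{\tau^0} = p$ and $P_\tau(p_\tau) = p_\tau$. Letting $T_\tau$ be the corresponding return time (also smooth in $\tau$), the formula $\tilde{\gamma}_\tau(s) := \varphi^{sT_\tau}_{R_{\lambda_\tau}}(p_\tau)$ gives a smooth family of parameterizations $\tilde{\gamma}_\tau : \R/\Z \to Y$ of the embedded orbit $\gamma_\tau \in \mca{P}_\emb(\lambda_\tau)$.

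\textbf{The derivative formula.} Fix $i \in \{1, \ldots, N\}$ and, for small $\delta > 0$, consider the smooth cylinder
\[
\tilde{\Psi} : \R/\Z \times [0, \delta] \to Y, \qquad (s, t) \mapsto \tilde{\gamma}_{\tau^0 + te_i}(s),
\]
whose oriented boundary is $\gamma_{\tau^0 + \delta e_i} - \gamma$. Stokes' theorem applied to the \emph{fixed} $1$-form $\lambda_{\tau^0}$ yields
\[
\int_{\gamma_{\tau^0 + \delta e_i}} \lambda_{\tau^0} - \int_{\gamma} \lambda_{\tau^0} = \int_{\R/\Z \times [0,\delta]} \tilde{\Psi}^* d\lambda_{\tau^0}.
\]
Since $\partial_s \tilde{\Psi}(s, t) = T_{\tau^0 + te_i}\, R_{\lambda_{\tau^0 + te_i}}(\tilde{\Psi}(s, t))$, the integrand equals $T_{\tau^0 + te_i}\, d\lambda_{\tau^0}\bigl(R_{\lambda_{\tau^0 + te_i}},\, \partial_t \tilde{\Psi}\bigr)\, ds \wedge dt$, which vanishes at $t = 0$ because $R_{\lambda_{\tau^0}} \in \ker d\lambda_{\tau^0}$; by smooth dependence in $t$, the right-hand side is thus $O(\delta^2)$. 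Separately, writing $\lambda_{\tau^0 + \delta e_i} = \lambda_{\tau^0} + \delta\, \partial_{\tau_i} \lambda_\tau(\tau^0) + O(\delta^2)$ and using smoothness of $\gamma_\tau$ in $\tau$,
\[
\int_{\gamma_{\tau^0 + \delta e_i}} \lambda_{\tau^0 + \delta e_i} - \int_{\gamma_{\tau^0 + \delta e_i}} \lambda_{\tau^0} = \delta \int_\gamma \partial_{\tau_i} \lambda_\tau(\tau^0) + O(\delta^2).
\]
Summing these two identities, dividing by $\delta$, and letting $\delta \to 0$ gives the claimed formula.

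\textbf{Main obstacle.} The only genuinely substantive point is ensuring that the Stokes correction $\int \tilde{\Psi}^* d\lambda_{\tau^0}$ is $O(\delta^2)$ rather than merely $o(\delta)$; this is exactly where the defining property $i_{R_{\lambda_{\tau^0}}} d\lambda_{\tau^0} = 0$ of the Reeb vector field enters, forcing the $ds \wedge dt$ integrand to vanish along $t = 0$ and hence contribute only at second order in $\delta$. Everything else is routine Taylor expansion and standard smooth dependence of flows on parameters.
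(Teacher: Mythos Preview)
Your proof is correct and follows exactly the route the paper indicates: implicit function theorem (via the Poincar\'e return map and nondegeneracy) for the existence of the family, and Stokes' theorem together with $d\lambda_{\tau^0}(R_{\lambda_{\tau^0}},\cdot)=0$ for the derivative formula. The paper's own proof is a two-line sketch citing precisely these two ingredients, and your write-up is a faithful and accurate unpacking of it.
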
 
\begin{proof} 
The first assertion follows from the implicit function theorem. 
The second assertion follows from Stokes'  theorem and 
$d\lambda_{\tau^0}(R_{\lambda_{\tau^0}}, \, \cdot ) = 0$. 
\end{proof} 

Now let us prove Lemma \ref{DerivativeOfCapacity}.
By spectrality of ECH spectral invariants,
there exists $C \in \mca{C}_\Z(Y, \lambda_{\tau^0})$ such that 
$c_\sigma(Y, \lambda_{\tau^0}) = C(\lambda_{\tau^0})$. 
Since $\lambda_{\tau^0}$ is nondegenerate, 
there are only finitely many such elements of $\mca{C}_\Z(Y, \lambda_{\tau^0})$, 
which we denote by $C^1, \ldots, C^J$. 
Then there exists an open neighborhood $U$ of $\tau^0$ in $[0,1]^N$ 
and $(C^1_\tau)_{\tau \in U}, \ldots, (C^J_\tau)_{\tau \in U}$ such that 
\begin{itemize} 
\item $C^j_\tau \in \mca{C}_\Z(Y, \lambda_\tau)$ for any $j \in \{1, \ldots, J\}$ and $\tau \in U$. 
\item $C^j_{\tau^0} = C^j$ for any $j \in \{1, \ldots, J\}$. 
\item $C^j_\tau$ depends smoothly on $\tau \in U$ for any $j \in \{1, \ldots, J\}$. 
\end{itemize} 

By spectrality, $C^0$-continuity, and the assumption that $\lambda_{\tau^0}$ is nondegenerate, 
we may assume 
\begin{equation}\label{181024_1} 
c_\sigma(Y, \lambda_\tau) \in \{C^1_\tau(\lambda_\tau), \ldots, C^J_\tau(\lambda_\tau)\} \qquad( \forall \tau \in U) 
\end{equation} 
by replacing $U$ with a smaller neighborhood if necessary. 

On the other hand, Lemma \ref{DerivativeOfAction} 
shows that, for any $j \in \{1, \ldots, J\}$, 
\[ 
U \to \R; \, \tau \mapsto C^j_\tau(\lambda_\tau)
\] 
is differentiable at $\tau^0$, and 
\[ 
\partial_{\tau_i} C^j_\tau(\lambda_\tau)(\tau^0) = C^j (\partial_{\tau_i} \lambda_\tau (\tau^0) )
\]
for every $i \in \{1, \ldots, N\}$. 
On the other hand, 
we assumed that 
$c_\sigma(Y, \lambda_\tau)$ is differentiable at 
$\tau^0$. 
Combined with (\ref{181024_1}), 
there exists $j \in \{1, \ldots, J\}$ such that 
\[ 
\partial_{\tau_i} C^j_\tau(\lambda_\tau)(\tau^0) = \partial_{\tau_i} c_\sigma(Y, \lambda_\tau)(\tau^0)
\] 
for every $i \in \{1, \ldots, N\}$. 
This completes the proof of Lemma \ref{DerivativeOfCapacity}.

\section{Periodic Reeb orbits on boundaries of star-shaped toric domains in $\C^2$}

In this section we study periodic Reeb orbits representing ECH homology classes 
of boundaries of star-shaped toric domains in $\C^2$. 
We prove a generic equidistribution result (Proposition \ref{190116_1}) for boundaries of strictly convex or concave toric domains, 
and state Question \ref{181226_1} (which is a toy model version of Question \ref{181218_1})
for general star-shaped toric domains. 

\subsection{Setting} 

$\gamma \subset (\R_{>0})^2$ is called a $C^\infty$ \textit{star-shaped} curve, 
if there exist $0 \le \theta_0 < \theta_1 < \pi/2$ and 
$\rho \in C^\infty([\theta_0, \theta_1], \R_{>0})$ such that 
\[ 
\gamma = \{ (\rho(\theta) \cos \theta, \rho(\theta) \sin \theta) \mid \theta_0 \le \theta \le \theta_1 \}.
\] 
For each $\theta$, we set $\gamma(\theta):= (\rho(\theta) \cos \theta, \rho(\theta) \sin \theta)$. 

For each $p \in \gamma$, 
let $\nu (p)$ denote the unit vector 
which is normal to $T_p \gamma$ and satisfies 
$p \cdot \nu(p)>0$. 
$\gamma$ is called \textit{strictly convex} if 
\[ 
\nu(p) \cdot (q-p) <0 
\] 
for any distinct points $p, q \in \gamma$. 
Also, $\gamma$ is called \textit{strictly concave} if 
\[ 
\nu(p) \cdot (q-p)  >0
\] 
for any distinct points $p, q\in \gamma$. 

$\gamma$ is called \textit{complete} if $\theta_0 = 0$ and $\theta_1 = \pi/2$. 
In the rest of this subsection, we assume that $\gamma$ is complete. 
Let us set 
\[
\mca{R}(\gamma): = \{ (\rho(0), 0), \, (0, \rho(\pi/2)) \} \cup \{ p \in \gamma  \mid \nu(p)  \in \R   \cdot \Q^2 \}. 
\]
For each $p=(x,y) \in \mca{R}(\gamma)$, 
we define $n(p) \in  \Z^2 \setminus \{(0,0)\}$ as follows: 
\begin{itemize} 
\item If $p=(\rho(0), 0)$, then $n(p):=(1,0)$. 
\item If $p = (0, \rho(\pi/2))$, then $n(p):=(0,1)$. 
\item Otherwise, $n(p)$ is characterized by the following two properties: 
(i): $n(p) = a \nu(p)$ for some $a \in \R_{>0}$, 
(ii): there does not exist a pair $(a, n')$ such that $a \in \Z_{\ge 2}$, $n' \in \Z^2$ and $an'=n(p)$. 
\end{itemize} 
We define $w(p) \in \R_{>0}$ by 
$w(p):= p \cdot n(p)$. 
We say that $\gamma$ is \textit{nice}, 
if $w(p_1), \ldots, w(p_k)$ are linearly independent over $\Q$ 
for any distinct elements $p_1, \ldots, p_k$ of $\mca{R}(\gamma)$. 

\begin{lem} 
In the set of all complete star-shaped curves, 
the set of nice curves is residual with respect to the $C^\infty$-topology.
\end{lem}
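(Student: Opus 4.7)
I would argue by Baire category combined with Sard--Smale transversality. Identify the space of complete $C^\infty$ star-shaped curves with $\mca{S} := C^\infty([0,\pi/2], \R_{>0})$ via the radial function $\rho$, equipped with the $C^\infty$-topology. A curve $\gamma$ fails to be nice precisely when there exist distinct $p_1,\ldots,p_k \in \mca{R}(\gamma)$ together with a nonzero $(q_1,\ldots,q_k) \in \Z^k$ satisfying $\sum_i q_i w(p_i) = 0$. Each $p_i$ carries an associated primitive vector $n_i = n(p_i)$; the collection of admissible data $(n_\bullet, q_\bullet) = ((n_1,\ldots,n_k),(q_1,\ldots,q_k))$ is countable, so it suffices to show that for each fixed datum the subset $B(n_\bullet, q_\bullet) \subset \mca{S}$ of curves admitting such a realization is nowhere dense.

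Fix a datum $(n_\bullet, q_\bullet)$. Distinctness forces at most one index to correspond to each endpoint $(\rho(0),0)$ and $(0, \rho(\pi/2))$; let $I_\text{int}$ index the remaining interior points. Introduce the universal map
\[
\Phi : \mca{S} \times (0, \pi/2)^{I_\text{int}} \longrightarrow \R^{I_\text{int}} \oplus \R, \qquad \Phi = \bigl((F_i)_{i \in I_\text{int}},\, G\bigr),
\]
where $F_i(\rho, \theta_i) := \gamma'(\theta_i) \cdot n_i$ encodes the tangency condition $\nu(p_i) \parallel n_i$, and $G(\rho, \theta_\bullet) := \sum_j q_j (\gamma_j \cdot n_j)$ encodes the rational relation, with $\gamma_j = \gamma(\theta_j)$ for $j \in I_\text{int}$ and $\gamma_j$ the appropriate endpoint otherwise. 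On the open subset of $(0,\pi/2)^{I_\text{int}}$ where the $\theta_i$'s are pairwise distinct, realizations for our datum correspond exactly to zeros of $\Phi$ (the sign ambiguity between $\nu(p_i)$ and $\pm n_i$ is absorbed into the countable indexing family).

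The crux is that $\Phi$ is transverse to $0$. At a zero $(\rho, \theta_\bullet)$, pick $j_0$ with $q_{j_0} \ne 0$, and set $b := (\cos\theta_{j_0}, \sin\theta_{j_0}) \cdot n_{j_0}$. A direct computation shows that a perturbation $\delta\rho$ supported near $\theta_{j_0}$ with prescribed $(\delta\rho(\theta_{j_0}), \delta\rho'(\theta_{j_0}))$ moves $(F_{j_0}, G)$ by a $2 \times 2$ linear map whose determinant is $-q_{j_0} b^2$. The decisive positivity $b > 0$ comes from the star-shaped hypothesis: $w(p_{j_0}) = \rho(\theta_{j_0})\cdot b$ and $w(p) = p \cdot n(p) > 0$ because $n(p)$ is a positive multiple of $\nu(p)$ with $p \cdot \nu(p) > 0$. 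Independent perturbations of $\rho$ supported near each other $\theta_i$ with $\delta\rho(\theta_i)=0$ and $\delta\rho'(\theta_i) \ne 0$ then adjust the remaining $F_i$'s freely without affecting $G$; the endpoint case $j_0 \in \{0, \pi/2\}$ is analogous, varying $\rho(0)$ or $\rho(\pi/2)$ directly. Hence $d\Phi$ is surjective at every zero.

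Granted transversality, Sard--Smale applied to the $C^l$-completion $\mca{S}^l$ for $l$ sufficiently large shows that the projection $\Phi^{-1}(0) \to \mca{S}^l$ is Fredholm of index $|I_\text{int}| - (|I_\text{int}|+1) = -1$, so its set of regular values is residual and has empty fibers; thus $B(n_\bullet, q_\bullet) \cap \mca{S}^l$ is meager in $\mca{S}^l$. The usual Baire argument intersecting over increasing $l$ transports the conclusion to $\mca{S}$, and the countable intersection over all $(n_\bullet, q_\bullet)$ yields the residual set of nice curves. The main obstacle is the transversality step above; once the nonvanishing of the $2 \times 2$ determinant is secured by the star-shaped positivity $w(p) > 0$, the remaining Baire/Sard--Smale bookkeeping is routine.
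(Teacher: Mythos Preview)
Your argument is correct, and it reaches the same conclusion via a genuinely different organization than the paper's proof. The paper truncates by a single integer parameter $l$: it defines $\mca{R}_l$ as the set of curves for which (i) the endpoint normals avoid the finite set $U_l$ of rational directions with bounded height, (ii) every element of $U_l$ is a \emph{regular value} of the Gauss map $\nu$ (forcing $\nu^{-1}(U_l)$ to be finite), and (iii) no integer relation with coefficients in $[-l,l]$ holds among the resulting weights. It then asserts that $\mca{R}_l$ is open and dense and takes the intersection over $l$. You instead index the countable family by the full combinatorial datum $(n_\bullet,q_\bullet)$, fold the tangency condition and the rational relation into a single map $\Phi$, and verify transversality directly --- in particular you make explicit that the nonvanishing of the relevant $2\times 2$ minor comes from the star-shaped positivity $w(p)>0$, a point the paper leaves implicit. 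Your route buys a self-contained transversality computation and avoids the auxiliary regularity condition on $\nu$; the paper's route buys brevity, since the finiteness of $\nu^{-1}(U_l)$ reduces condition (iii) to a visibly open and perturbable finite-dimensional constraint.

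Two small remarks on your write-up. First, the projection of $\Phi^{-1}(0)$ is \emph{a priori} only meager, not nowhere dense; to upgrade this (and to pass cleanly from $\mca{S}^l$ to $C^\infty$) one should restrict the $\theta_\bullet$ to compact sets $K$ with pairwise-distinct coordinates, so that each $B_K$ is closed with empty interior, and then take a countable exhaustion --- this is presumably what you mean by ``the usual Baire argument,'' but it is worth naming. Second, zeros of $\Phi$ for a fixed datum include ``false positives'' where $\nu(p_i)$ is a \emph{negative} multiple of $n_i$; as you note, this is harmless since you are bounding $B(n_\bullet,q_\bullet)$ from above, but the phrase ``correspond exactly'' overstates it.
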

\begin{proof} 
Let 
$U:= \{ (x,y) \mid x^2 + y^2 = 1 \}$, 
and consider the map 
$\nu: \gamma \to U$ which maps each $p \in \gamma$ to $\nu(p) \in U$ which is defined above. 

For each $l \in \Z_{>0}$, let 
\[ 
U_l := \bigg\{ \bigg( \frac{i}{\sqrt{i^2 + j^2}} , \frac{j}{\sqrt{i^2+ j^2}} \bigg) \biggm{|}  (i,j) \in ([-l, l] \cap \Z)^2 \setminus \{ (0,0) \} \bigg\} \subset U, 
\] 
and consider the following three conditions for a $C^\infty$ star-shaped curve $\gamma$: 
\begin{itemize} 
\item 
$\nu(0, \rho(\pi/2)), \nu(\rho(0), 0) \notin U_l$. 
\item 
Every element of $U_l $ is a regular value of $\nu$. In particular, $\nu^{-1}(U_l)$ is a finite set. 
\item 
For any distinct points $p_1, \ldots, p_N \in \nu^{-1}(U_l ) \cup \{ (0, \rho(\pi/2)), (\rho(0), 0) \}$
and $a_1, \ldots, a_N \in ([-l, l] \cap \Z)^N \setminus \{ (0, \ldots, 0) \}$, 
there holds $a_1 w(p_1) + \cdots + a_N w(p_N) \ne 0$. 
\end{itemize}
Let $\mca{R}_l$ be the set of $\gamma$ such that
these conditions are satisfied. 
Then it is easy to check that $\mca{R}_l$ is open and dense for each $l$. 
Finally, if $\gamma \in \bigcap_l \mca{R}_l$, then $\gamma$ is nice. 
\end{proof} 

Let us define 
$\mu: \C^2 \to (\R_{\ge 0})^2$ by 
$\mu(z_1, z_2):= (\pi |z_1|^2, \pi |z_2|^2)$. 
Then, 
$Y_\gamma:= \mu^{-1}(\gamma)$ 
is a star-shaped hypersurface in $\C^2$. 
For each $i \in \{1, 2\}$, let us denote
\[
z_i  = x_i + \sqrt{-1} y_i =  \sqrt{{\mu_i}/{\pi}} \cdot  e^{\sqrt{-1} \theta_i},
\]
and define $\lambda \in \Omega^1(\C^2)$ by  
\[
\lambda: = \sum_{i=1}^2 \frac{x_i dy_i - y_i dx_i}{2}
=\sum_{i=1}^2 \frac{\mu_i d\theta_i}{2\pi}. 
\]
Then $\lambda_\gamma:= \lambda |_{Y_\gamma}$ is a contact form on $Y_\gamma$, and 
$\xi_\gamma:= \ker (\lambda_\gamma)$ (oriented by $d\lambda_\gamma>0$) satisfies 
\[
\ech(Y_\gamma,  \xi_\gamma, 0) = \bigoplus_{k=1}^\infty (\Z/2) \sigma_k
\]
where each $\sigma_k$ is homogeneous and 
$I(\sigma_{k+1}, \sigma_k)=2$ for every $k \in \Z_{\ge 1}$. 

For any $p \in \gamma$, 
let $\delta_p$ denote the distribution on $\gamma$ defined by 
\[
\delta_p(f):= f(p) \quad (\forall f \in C^\infty(\gamma, \R)).
\] 
For any $1$-dimensional current $C$ on $Y$ and $\omega \in \Omega^1(Y)$, 
let us define a distribution $C \wedge \omega$ on $Y$ by 
\[ 
(C \wedge \omega)(f) := C( \omega f) \quad ( \forall f \in C^\infty(Y)). 
\] 

\begin{lem}\label{190119_1} 
Let $k \in \Z_{>0}$, and let $C$ be a $1$-dimensional current on $Y_\gamma$ 
which represents $\sigma_k$ with $\lambda_\gamma$ (see Section 1.3). 
Then there exist $m_1, \ldots, m_N \in \Z_{>0}$ and 
distinct points $p_1, \ldots, p_N \in \mca{R}(\gamma)$ such that 
\[ 
(\mu |_{Y_\gamma})_*(C \wedge \lambda_\gamma) = \sum_{1 \le i  \le N} m_i w(p_i) \delta_{p_i}. 
\] 
In particular, 
\begin{equation}\label{190119_2}
c_{\sigma_k}(Y_\gamma, \lambda_\gamma) = \sum_{1 \le i \le N} m_i w(p_i). 
\end{equation} 
If $\gamma$ is nice, then the set of pairs $\{ (m_i, p_i) \}_{1 \le i \le N}$ is characterized by (\ref{190119_2}). 
\end{lem}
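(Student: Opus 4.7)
The plan is to compute everything directly from the toric structure. First, in action-angle coordinates $(\mu_1, \theta_1, \mu_2, \theta_2)$, the contact form reads $\lambda = (\mu_1 d\theta_1 + \mu_2 d\theta_2)/(2\pi)$, and a direct calculation of $\ker d\lambda|_{TY_\gamma}$ shows that at any point lying over an interior point $p \in \gamma \cap (\R_{>0})^2$,
\[
R_{\lambda_\gamma} = \frac{2\pi}{p\cdot\nu(p)}\bigl(\nu_1(p)\partial_{\theta_1}+\nu_2(p)\partial_{\theta_2}\bigr);
\]
at the two endpoints $(\rho(0),0)$ and $(0,\rho(\pi/2))$, where the angular coordinates degenerate, one checks separately (in Cartesian coordinates, or as a continuous extension of the above with $n((\rho(0),0))=(1,0)$ and $n((0,\rho(\pi/2)))=(0,1)$) that the entire fiber $\mu^{-1}(p)$ is a single simple Reeb orbit of period $\rho(0)$ or $\rho(\pi/2)$ respectively. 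Since $d\mu_i(R_{\lambda_\gamma})\equiv 0$, every Reeb orbit is confined to a single torus $\mu^{-1}(p)$, and on an interior torus the flow is linear of slope $\nu_2(p)/\nu_1(p)$. Hence a closed orbit over $p$ exists precisely when $\nu(p)\in\R\cdot\Q^2$, i.e.\ when $p\in\mca{R}(\gamma)$, and in that case every simple closed orbit on $\mu^{-1}(p)$ has period exactly $w(p)$ by primitivity of $n(p)$.

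Given this, I would write $C = \sum_{i=1}^J a_i\gamma_i$ with $a_i\in\Z_{>0}$ and $\gamma_i\in\mca{P}_\emb(Y_\gamma,\lambda_\gamma)$, and set $p(\gamma_i):=\mu(\gamma_i)\in\mca{R}(\gamma)$. Using $\lambda_\gamma(R_{\lambda_\gamma})\equiv 1$, for any $f\in C^\infty(\gamma)$ a direct computation gives
\[
(\mu|_{Y_\gamma})_*(\gamma_i\wedge\lambda_\gamma)(f) = \int_{\gamma_i}(f\circ\mu)\,\lambda_\gamma = \int_0^{T_{\gamma_i}} f(p(\gamma_i))\,dt = w(p(\gamma_i))\,f(p(\gamma_i)).
\]
Letting $\{p_1,\dots,p_N\}$ be the distinct values taken by $p(\gamma_i)$ and setting $m_j := \sum_{p(\gamma_i)=p_j} a_i$, summation over $i$ produces the desired identity $(\mu|_{Y_\gamma})_*(C\wedge\lambda_\gamma) = \sum_j m_j w(p_j)\delta_{p_j}$. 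Evaluating this distribution on the constant function $1$ and recalling that $C(\lambda_\gamma)=c_{\sigma_k}(Y_\gamma,\lambda_\gamma)$ then yields (\ref{190119_2}). For the final assertion, niceness of $\gamma$ is precisely the $\Q$-linear independence of $\{w(p)\}_{p\in\mca{R}(\gamma)}$, so any two positive-integer representations of $c_{\sigma_k}(Y_\gamma,\lambda_\gamma)$ as $\sum_j m_j w(p_j)$ over distinct points of $\mca{R}(\gamma)$ must coincide as multisets.

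The main obstacle I anticipate lies in the Reeb-orbit description at the two endpoints of $\gamma$, where one of the angle coordinates is undefined on the fiber; one must verify that $R_{\lambda_\gamma}$ extends smoothly there and that the corresponding Reeb circle genuinely has period $w(p)$, which requires either a local Cartesian computation or a careful continuity argument. Once these boundary cases are settled and the interior dynamics is identified with a linear flow on a $2$-torus, the remainder of the proof is a direct bookkeeping of the decomposition $C = \sum a_i\gamma_i$ together with the $\Q$-independence property built into the definition of niceness.
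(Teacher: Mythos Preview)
Your proposal is correct and follows essentially the same approach as the paper's proof, which simply writes $C=\sum m_i\gamma_i$, sets $p_i:=\mu(\gamma_i)\in\mca{R}(\gamma)$ with $T_{\gamma_i}=w(p_i)$, pushes forward, and evaluates at the constant $1$; your version is in fact more careful, since you explicitly compute $R_{\lambda_\gamma}$, treat the two endpoint fibers, and group distinct orbits lying over the same point of $\gamma$, all of which the paper leaves implicit.
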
 
\begin{proof}
Let us denote $C= \sum_{1 \le i \le N} m_i \gamma_i$, 
where $m_1, \ldots, m_N$ are positive integers, and 
$\gamma_1, \ldots, \gamma_N$ are distinct elements of $\mca{P}(Y_\gamma, \lambda_\gamma)$. 
For each $i$, let $p_i:= \mu(\gamma_i)$. Then $p_i \in \mca{R}(\gamma)$ and $T_{\gamma_i} =  w(p_i)$. 
Then we obtain 
\[ 
(\mu|_{Y_\gamma})_*(C \wedge \lambda_\gamma) = \sum_{1 \le i \le N} m_i  \cdot (\mu |_{Y_\gamma})_*(\gamma_i \wedge \lambda_\gamma) = \sum_{1 \le i \le N}  m_i w(p_i) \delta_{p_i}. 
\] 
By evaluating the constant function $1$, we obtain (\ref{190119_2}). 
The last assertion is obvious from the definition of niceness. 
\end{proof}

When $\gamma$ is nice, 
the distribution $\sum_{1 \le i  \le N} m_i  w(p_i) \delta_{p_i}$ in Lemma \ref{190119_1} 
is uniquely determined, i.e. not depend on choices of $C$; let us denote it by $\mathfrak{D}_k$. 
On the other hand, we define a distribution $\mathfrak{D}_\vol$ on $\gamma$ by 
$\mathfrak{D}_\vol:= (\mu |_{Y_\gamma})_*(\lambda_\gamma \wedge d \lambda_\gamma)$. 
Let us consider the following condition: 
\begin{equation}\label{181220_1}
\lim_{k \to \infty}  \frac{\mathfrak{D}_k(f)}{\sqrt{2k}} = \frac{\mathfrak{D}_\vol(f)}{\sqrt{\vol (Y_\gamma, \lambda_\gamma)}}
\qquad
(\forall f \in C^\infty(\gamma, \R)). 
\end{equation}

\begin{rem}\label{190123_1} 
(\ref{181220_1}) is equivalent to the condition that 
any (not necessarily complete) star-shaped curve $\gamma' \subset \gamma$ satisfies 
\[ 
\lim_{k \to \infty} \frac{\mathfrak{D}_k(\gamma')}{\sqrt{2k}}  = \frac{\mathfrak{D}_\vol(\gamma')}{\sqrt{\vol (Y_\gamma, \lambda_\gamma)}}. 
\] 
Here, for any distribution $\mathfrak{D}$ on $\gamma$, we define 
\[ 
\mathfrak{D}(\gamma'):= \inf\{ \mathfrak{D}(f) \mid f \in C^\infty(\gamma, \R_{\ge 0}), \, f|_{\gamma'} \equiv 1 \}. 
\] 
\end{rem} 

The goal of the rest of this paper is to prove the following result: 

\begin{prop}\label{190116_1} 
(\ref{181220_1}) holds if $\gamma$ is nice and strictly convex or concave. 
\end{prop}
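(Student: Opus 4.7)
The strategy is to combine the explicit combinatorial description of ECH for strictly convex or concave toric domains with the ECH volume theorem. For nice $\gamma$ of either type, strict convexity (resp.\ concavity) implies that the primitive normal map $p \mapsto n(p)$ is a bijection between $\mca{R}(\gamma)$ and the primitive vectors in $\Z^2_{\ge 0} \setminus \{0\}$. Hence every current $C \in \mca{C}_\Z(Y_\gamma, \lambda_\gamma)$ representing $\sigma_k$ decomposes as $C = \sum_j c_{k,j} \gamma_{p_{k,j}}$ with pairwise distinct $p_{k,j}$, and by Lemma \ref{190119_1} the distribution $\mathfrak{D}_k = \sum_j c_{k,j} w(p_{k,j}) \delta_{p_{k,j}}$ has total mass $c_{\sigma_k}$.

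By Remark \ref{190123_1} and the volume theorem $c_{\sigma_k}/\sqrt{2k} \to \sqrt{\vol(Y_\gamma, \lambda_\gamma)}$, condition (\ref{181220_1}) is equivalent to the asymptotic equipartition
\[
\lim_{k \to \infty} \frac{\mathfrak{D}_k(\gamma')}{c_{\sigma_k}} = \frac{\mathfrak{D}_\vol(\gamma')}{\vol(Y_\gamma, \lambda_\gamma)}
\]
for every subarc $\gamma' \subset \gamma$ with endpoints in $\mca{R}(\gamma)$ (dense by strict convexity/concavity). I would then invoke the combinatorial formulas of Hutchings (convex case) and of Choi--Cristofaro-Gardiner--Frenkel--Hutchings--Ramos (concave case), which compute $c_{\sigma_k}$ and the extremal current via convex integer lattice paths in $\Z^2_{\ge 0}$; under the bijection above, the edges of such a path whose directions lie in the cone dual to $\gamma'$ contribute exactly $\mathfrak{D}_k(\gamma')$.

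The main analytic step is to upper bound this cone-contribution. I would construct, for each subarc $\gamma'$, an auxiliary complete strictly convex (resp.\ concave) star-shaped curve $\tilde\gamma'$ that agrees with $\gamma$ on $\gamma'$ and is capped off smoothly into the coordinate axes. Using the cone-truncation of the global extremal path as a feasible configuration on $X_{\tilde\gamma'}$, the volume theorem applied to $X_{\tilde\gamma'}$ yields the upper bound $\mathfrak{D}_k(\gamma') \le (\mathfrak{D}_\vol(\gamma')/\vol(Y_\gamma, \lambda_\gamma) + o(1)) c_{\sigma_k}$ after passing to finer and finer caps. Summing these upper bounds over a partition of $\gamma$ into subarcs, together with $\sum_i \mathfrak{D}_k(\gamma_i') = c_{\sigma_k}$, forces each bound to be asymptotically tight, yielding the matching lower bounds.

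\textbf{Main obstacle.} The delicate step is to relate the cone-truncation of the global extremal lattice path on $\gamma$ to a feasible combinatorial object for the local ECH problem on $X_{\tilde\gamma'}$, and to identify the ECH-grading that this truncation represents, so that the volume theorem applied to $X_{\tilde\gamma'}$ produces the correct asymptotic coefficient $\mathfrak{D}_\vol(\gamma')/\vol(Y_\gamma, \lambda_\gamma)$. Strict convexity (resp.\ concavity) is indispensable here: it provides the clean bijection between $\mca{R}(\gamma)$ and primitive vectors in $\Z^2_{\ge 0} \setminus \{0\}$ needed to isolate directions dual to $\gamma'$, and it ensures that the auxiliary wedge $X_{\tilde\gamma'}$ can itself be realized as a genuine strictly convex/concave toric domain, where the combinatorial framework and the volume theorem apply in the sharp form required.
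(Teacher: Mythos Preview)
Your plan has a genuine gap in the ``main analytic step'', and the obstacle you flag is not merely delicate but, as stated, circular. In the convex case the combinatorial formula is $c_{\sigma_k}(Y_\gamma,\lambda_\gamma)=\min\{A_\gamma(\Lambda):L(\Lambda)=k\}$, so declaring the cone-truncation $\Lambda'_k$ (completed by caps) a \emph{feasible} path for $X_{\tilde\gamma'}$ yields only
\[
c_{\sigma_{k'}}(Y_{\tilde\gamma'},\lambda_{\tilde\gamma'})\ \le\ A_{\tilde\gamma'}(\widetilde{\Lambda'_k}),
\]
where $k'=L(\widetilde{\Lambda'_k})$. This is a \emph{lower} bound on (a quantity dominating) $\mathfrak{D}_k(\gamma')$, not the upper bound you need for the partition argument. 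Worse, the grading $k'$ attached to the truncation is exactly the number of lattice points beneath that portion of $\Lambda_k$; knowing $k'/k$ asymptotically is equivalent to knowing where $\Lambda_k$ sits, which is the equidistribution statement you are trying to prove. Finally, even if those issues were resolved, $\vol(Y_{\tilde\gamma'},\lambda_{\tilde\gamma'})=2A(\tilde\gamma')$ does \emph{not} tend to $\mathfrak{D}_\vol(\gamma')$ as the caps shrink (the area under a complete curve is not the sector area that $\mathfrak{D}_\vol(\gamma')$ records), so the volume theorem on $X_{\tilde\gamma'}$ would not produce the coefficient $\mathfrak{D}_\vol(\gamma')/\vol(Y_\gamma,\lambda_\gamma)$ you are aiming for.

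The paper takes a different and more direct route: it proves an isoperimetric-type inequality for the functional $A_\gamma$ (for convex $\gamma$, $A_{\gamma_0}(\gamma)/\sqrt{A(\gamma)}\ge A_{\gamma_0}(\gamma_0)/\sqrt{A(\gamma_0)}$, with equality only for dilates; and a reversed inequality in the concave case). Combining this with the volume theorem for $Y_\gamma$ alone (no auxiliary domains) forces the extremal lattice paths to satisfy $\Lambda_k/\sqrt{k}\to\gamma$ in the Hausdorff distance: any subsequential limit $\gamma'\ne\gamma$ would violate the isoperimetric inequality and allow one to beat the minimum (resp.\ maximum) defining $c_{\sigma_k}$. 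Once $\Lambda_k/\sqrt{k}\to\gamma$ is known, strict convexity/concavity lets one read off the Hausdorff convergence of the cone-truncation to each subarc $\gamma'$, and the continuity of $A_\gamma$ in the Hausdorff distance yields $\mathfrak{D}_k(\gamma')/\sqrt{2k}\to A_\gamma(\gamma')/\sqrt{2}=\mathfrak{D}_\vol(\gamma')/\sqrt{\vol}$ directly. The isoperimetric inequality is the missing ingredient that breaks the circularity in your scheme.
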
 

Based upon this result, let us propose the following question, 
which is a toy model version of Question \ref{181218_1}: 

\begin{que}\label{181226_1} 
Does 
(\ref{181220_1}) hold for a 
$C^\infty$-generic nice curve $\gamma$ ? 
\end{que}

To prove Proposition \ref{190116_1}, 
in the next subsection
we state and prove versions of isoperimetric inequality. 

\subsection{Versions of isoperimetric inequality} 

$\gamma \subset (\R_{>0})^2$ is called a \textit{continuous star-shaped curve}, 
if there exist $0 \le \theta_0 < \theta_1 \le \pi/2$
and $\rho \in C^0([\theta_0, \theta_1], \R_{>0})$ such that 
\[
\gamma = \{ (\rho(\theta) \cos \theta, \rho(\theta) \sin \theta ) \mid \theta \in [\theta_0, \theta_1] \}. 
\] 
For each $\theta$, we denote $\gamma(\theta):= (\rho(\theta) \cos \theta, \rho(\theta) \sin \theta )$. 
$\gamma$ is called \textit{convex} if 
\[ 
\det (\gamma(\theta') - \gamma(\theta), \gamma(\theta'') - \gamma(\theta')) \ge 0
\] 
for any $\theta < \theta' < \theta''$. 
$\gamma$ is called \textit{concave} if 
\[ 
\det (\gamma(\theta') - \gamma(\theta), \gamma(\theta'') - \gamma(\theta')) \le 0
\] 
for any $\theta < \theta ' < \theta''$. 

$\gamma$ is called \textit{complete} 
if $\theta_0 = 0$ and $\theta_1 = \pi/2$. 
When $\gamma$ is complete, 
let $A(\gamma)$ denote the area of the region 
$\{ (r\cos \theta, r \sin \theta) \mid 0 \le \theta \le \pi/2, \, 0 \le r \le \rho(\theta) \}$. 

Let $\gamma_0$ and $\gamma$ be continuous convex curves, and 
suppose that $\gamma_0$ is complete. 
For all but countably many $p \in \gamma$, 
there exists a tangent line of $\gamma$ at $p$. 
Hence one can define $\nu(p)$ in the same manner as in the $C^\infty$-case. 
Now we define 
\[ 
A_{\gamma_0}(\gamma):= \int_\gamma (\max_{q \in \gamma_0}  q \cdot \nu(p) ) \, d \mu_\gamma(p),  
\] 
where the measure $\mu_\gamma$ is defined by 
$d\mu_\gamma = |\partial_\theta \gamma| d \theta$. 
It is easy to check that $A_{\gamma_0}$ is continuous with respect to the Hausdorff distance: 

\begin{lem}\label{181222_1} 
Let $\gamma_0$ be a complete continuous convex curve, and 
$(\gamma_j)_{j \ge 1}$ be a sequence of (not necessarily complete) continuous convex curves which converges to a continuous convex curve $\gamma_\infty$ with respect to the Hausdorff distance. 
Then 
$\lim_{j \to \infty} A_{\gamma_0}(\gamma_j) = A_{\gamma_0}(\gamma_\infty)$. 
\end{lem}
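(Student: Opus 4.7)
The plan is to reduce this continuity statement for convex arcs to the classical Hausdorff-continuity of mixed areas for planar convex bodies. First I would repackage $A_{\gamma_0}$ via the support function $h: \R^2 \to \R$, $h(v) := \max_{q \in \gamma_0} q \cdot v$, which is Lipschitz continuous and positively homogeneous of degree one. Then $A_{\gamma_0}(\gamma) = \int_\gamma h(\nu(p))\, d\mu_\gamma(p)$ for any rectifiable curve $\gamma$ with an almost-everywhere defined outward normal, and in particular for continuous convex arcs.

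Next I would close each arc by a chord. Let $A_j, B_j$ denote the endpoints of $\gamma_j$ (with $j=\infty$ allowed). Since $\arg: (\R_{>0})^2 \to (0,\pi/2)$ is continuous, the extrema of $\arg$ over a compact set are continuous in Hausdorff distance, so $\arg A_j \to \arg A_\infty$ and $\arg B_j \to \arg B_\infty$; combined with Hausdorff closeness of $A_j, B_j$ to $\gamma_\infty$, this yields $A_j \to A_\infty$ and $B_j \to B_\infty$ in $\R^2$. Let $L_j$ be the segment from $A_j$ to $B_j$ and set $\tilde{\gamma}_j := \gamma_j \cup L_j$, a closed convex curve bounding a convex region $K_j$. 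By additivity of the integral, $A_{\gamma_0}(\tilde{\gamma}_j) = A_{\gamma_0}(\gamma_j) + A_{\gamma_0}(L_j)$, and the chord contribution $A_{\gamma_0}(L_j) = h(\nu_{L_j}) |L_j|$ depends continuously on $(A_j, B_j)$, so $A_{\gamma_0}(L_j) \to A_{\gamma_0}(L_\infty)$ is immediate. It remains to show $A_{\gamma_0}(\tilde{\gamma}_j) \to A_{\gamma_0}(\tilde{\gamma}_\infty)$.

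For the closed-curve statement I would invoke the classical identity
\[
A_{\gamma_0}(\partial K) = \int_{\partial K} h(\nu)\, d\mu = 2\,V(K,\, \overline{\mathrm{conv}}(\gamma_0)),
\]
where $V$ denotes the mixed area in $\R^2$. Hausdorff convergence $\tilde{\gamma}_j \to \tilde{\gamma}_\infty$, inherited from $\gamma_j \to \gamma_\infty$ together with $L_j \to L_\infty$, implies $K_j \to K_\infty$ in Hausdorff distance, and the classical joint Hausdorff-continuity of mixed areas then yields $V(K_j, \overline{\mathrm{conv}}(\gamma_0)) \to V(K_\infty, \overline{\mathrm{conv}}(\gamma_0))$, completing the proof.

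The main obstacle is invoking the mixed-area identity and its continuity in the non-smooth setting: for $C^2$ strictly convex bodies both are textbook facts, but the general continuous convex case requires an approximation argument based on the density of smooth strictly convex bodies in the Hausdorff topology (both sides of the identity being jointly Hausdorff-continuous). An alternative, more self-contained route bypasses the mixed-area machinery by working directly with the surface-area measure $\sigma_\gamma := \nu_*(d\mu_\gamma)$ on $S^1$ and establishing its weak-$*$ continuity under Hausdorff convergence of convex arcs; since $h$ is continuous and the total masses are uniformly bounded, $A_{\gamma_0}(\gamma_j) = \int_{S^1} h\, d\sigma_{\gamma_j} \to \int_{S^1} h\, d\sigma_{\gamma_\infty}$ would then follow directly.
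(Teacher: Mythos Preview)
Your argument is sound. The paper does not prove this lemma at all: it is introduced with the phrase ``it is easy to check'' and then simply stated. So there is no approach to compare against; you have supplied the details the author omitted.

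A few minor remarks. First, you silently extend $A_{\gamma_0}$ beyond its definition in the paper (which is only for star-shaped convex arcs) to the chord $L_j$ and to the closed curve $\tilde\gamma_j$; this is harmless since the integral formula $\int h(\nu)\,d\mu$ makes sense for any rectifiable curve, but it is worth saying explicitly. Second, your endpoint-convergence step is correct but could be stated more directly: the radial projection $(\R_{>0})^2 \to (0,\pi/2)$ is continuous, so Hausdorff convergence of the $\gamma_j$ forces Hausdorff convergence of their angular ranges, hence of the endpoints. Third, your alternative route via weak-$*$ convergence of the pushforward measures $\sigma_{\gamma_j} = (\nu)_*\mu_{\gamma_j}$ on $S^1$ is arguably the more natural one here, since it avoids closing up the arcs and invoking mixed-area machinery; it is probably closer to what the author had in mind as ``easy''.
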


Now let us state a version of isoperimetric inequality in this  setting. 

\begin{lem}\label{181222_2} 
Let $\gamma_0$ and $\gamma$ be continous, convex, complete curves. 
Then there holds 
\[ 
\frac{A_{\gamma_0}(\gamma)}{\sqrt{A(\gamma)}} \ge \frac{A_{\gamma_0}(\gamma_0)}{\sqrt{A(\gamma_0)}}. 
\] 
The equality holds if and only if $\gamma = c \gamma_0$ for some $c \in \R_{>0}$. 
\end{lem}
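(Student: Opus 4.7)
The plan is to recognize $A_{\gamma_0}(\gamma)$ as twice the mixed area of two planar convex bodies and then invoke Minkowski's mixed-area inequality. Let $K$ (respectively $K_0$) denote the closed region enclosed by $\gamma$ (respectively $\gamma_0$) together with the two coordinate-axis segments from the origin; convexity and completeness of the curves make $K, K_0$ compact convex subsets of the first quadrant containing the origin as a corner, with $A(\gamma) = \mathrm{Area}(K)$ and $A(\gamma_0) = \mathrm{Area}(K_0)$. Writing $h_{K_0}(\nu) := \max_{q \in K_0} q \cdot \nu$ for the support function of $K_0$, observe that for $p \in \gamma$ the outer normal $\nu(p)$ lies in the closed first quadrant, so the maximum of $q \cdot \nu$ over $K_0$ is attained on its outer boundary $\gamma_0$ and hence $\max_{q \in \gamma_0} q \cdot \nu(p) = h_{K_0}(\nu(p))$.

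The next step is the standard planar mixed-area identity
\[
2\,V(K, K_0) \;=\; \int_{\partial K} h_{K_0}(\nu)\, ds,
\]
where $V(K, K_0)$ denotes the mixed area and $ds$ arclength on $\partial K$. The boundary $\partial K$ is the union of $\gamma$ with two axis segments whose outer normals are $-e_1$ and $-e_2$; since $K_0 \subset (\R_{\ge 0})^2$ contains the origin, $h_{K_0}(-e_1) = h_{K_0}(-e_2) = 0$, so the integral collapses to $\int_\gamma h_{K_0}(\nu)\,d\mu_\gamma = A_{\gamma_0}(\gamma)$. Specializing to $\gamma = \gamma_0$ also yields $A_{\gamma_0}(\gamma_0) = 2V(K_0, K_0) = 2A(\gamma_0)$. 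Minkowski's inequality $V(K, K_0)^2 \ge \mathrm{Area}(K)\,\mathrm{Area}(K_0)$ therefore rearranges to
\[
\frac{A_{\gamma_0}(\gamma)^2}{A(\gamma)} \;\ge\; 4\,A(\gamma_0) \;=\; \frac{A_{\gamma_0}(\gamma_0)^2}{A(\gamma_0)},
\]
and taking square roots gives the claimed inequality.

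For the equality case, Minkowski's inequality gives equality iff $K$ and $K_0$ are homothetic, i.e. $K = cK_0 + x$ for some $c > 0$ and $x \in \R^2$. Both $K$ and $cK_0$ sit in the closed first quadrant with the two nonnegative coordinate half-axes as supporting lines at the origin, and any translation by $x \ne 0$ destroys at least one of these two supporting conditions; hence $x = 0$ and $\gamma = c\gamma_0$. The main technical point, rather than a serious obstacle, is to justify the mixed-area identity in the merely continuous (not $C^\infty$) convex setting; this is standard Brunn-Minkowski theory, with the integral interpreted via the surface area measure of $K$ on $S^1$, and Lemma \ref{181222_1} already supplies the continuity in $\gamma$ needed for any approximation argument by smooth strictly convex curves.
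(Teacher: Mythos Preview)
Your argument is correct and is exactly the route the paper points to: the proof is omitted there with a reference to the generalized isoperimetric inequality of Brothers--Morgan, which in the planar case is precisely Minkowski's mixed-area inequality for the Wulff body $K_0$, so your reduction to $V(K,K_0)^2 \ge \mathrm{Area}(K)\,\mathrm{Area}(K_0)$ is the intended proof.

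One small inaccuracy worth fixing: it is not true in general that $\nu(p)$ lies in the closed first quadrant for $p \in \gamma$. For instance, if the tangent to $\gamma$ at the endpoint $(\rho(0),0)$ points into the open upper-right (which is compatible with star-shapedness and convexity of $\gamma$), then $\nu$ there has negative second coordinate. Nevertheless your key identity $\max_{q\in\gamma_0} q\cdot\nu(p)=h_{K_0}(\nu(p))$ survives: the maximum of $q\cdot\nu$ over $K_0$ is attained on $\partial K_0 = \gamma_0 \cup S_x \cup S_y$, and on each axis segment $S_x,S_y$ the linear functional is bounded above by its value at the endpoint shared with $\gamma_0$ (or by $0$, attained at the origin, which is itself $\le \max_{\gamma_0}q\cdot\nu$ since at least one coordinate of $\nu$ is positive). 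So the maximum over $K_0$ equals the maximum over $\gamma_0$ regardless of the sign of the components of $\nu(p)$, and the rest of your computation and the equality-case analysis go through unchanged.
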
 

The proof of Lemma \ref{181222_2} is omitted, since it is similar to 
the proof of the generalized isoperimetric inequality in \cite{isoperimetric}. 

We also need a similar result for concave curves. 
Let $\gamma_0$ and $\gamma$ be continuous concave curves, 
and suppose that $\gamma_0$ is complete. 
Now we define 
\[ 
A_{\gamma_0}(\gamma):= \int_\gamma (\min_{q \in \gamma_0}  q \cdot \nu(p) ) \, d \mu_\gamma(p). 
\] 
$A_{\gamma_0}$ is continuous with respect to the Hausdorff distance. 
Now let us state a version of isoperimetric inequality in this setting. 

\begin{lem}\label{190123_2} 
Let $\gamma_0$ and $\gamma$ be continuous, concave, complete curves. 
Then there holds 
\[ 
\frac{A_{\gamma_0}(\gamma)}{\sqrt{A(\gamma)}} \le \frac{A_{\gamma_0}(\gamma_0)}{\sqrt{A(\gamma_0)}}. 
\] 
The equality holds if and only if $\gamma = c \gamma_0$ for some $c \in \R_{>0}$. 
\end{lem}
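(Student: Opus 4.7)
My plan is to adapt the proof of Lemma \ref{181222_2} (which follows the generalized isoperimetric inequality of \cite{isoperimetric}, based on Minkowski's mixed-volume inequality applied to the convex subgraphs $K_\gamma, K_{\gamma_0}$) to the concave setting. For concave $\gamma, \gamma_0$ the subgraphs $K_\gamma, K_{\gamma_0}$ are no longer convex; a first attempt is to apply Minkowski to the convex complements $L_\gamma := [0, \rho(0)] \times [0, \rho(\pi/2)] \setminus K_\gamma$ and $L_{\gamma_0}$, but a direct computation shows this yields only a strictly weaker bound than the desired inequality (for instance, when $\gamma$ and $\gamma_0$ share the same bounding rectangle, one only obtains $(2ab - A_{\gamma_0}(\gamma))^2 \ge 4(ab - A(\gamma))(ab - A(\gamma_0))$, which is weaker than $A_{\gamma_0}(\gamma)^2 \le 4 A(\gamma) A(\gamma_0)$ away from equality). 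So a more refined argument is required.

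By an approximation argument (a concave analog of Lemma \ref{181222_1}), it suffices to treat smooth strictly concave $\gamma, \gamma_0$. I would reparametrize each by the angle $\phi$ of the outer unit normal $\nu(\phi) = (\cos\phi, \sin\phi)$ and work with the support functions $h_\gamma(\phi) := p(\phi) \cdot \nu(\phi)$ from the origin. Concavity gives $h_\gamma + h_\gamma'' \le 0$ and $d\mu_\gamma = -(h_\gamma + h_\gamma'') \, d\phi$. A direct computation then yields
\[
2A(\gamma) = -\int h_\gamma(h_\gamma + h_\gamma'') \, d\phi, \qquad A_{\gamma_0}(\gamma) = -\int h_{\gamma_0}(h_\gamma + h_\gamma'') \, d\phi,
\]
where the second identity uses that $\min_{q \in \gamma_0} q \cdot \nu(\phi) = h_{\gamma_0}(\phi)$ by concavity of $\gamma_0$. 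Integration by parts casts both expressions as values of the symmetric bilinear form $B(f, g) := \int (f' g' - f g) \, d\phi$, giving $B(h_\gamma, h_{\gamma_0}) = A_{\gamma_0}(\gamma)$, $B(h_\gamma, h_\gamma) = 2A(\gamma)$, and $B(h_{\gamma_0}, h_{\gamma_0}) = 2A(\gamma_0)$. Since the $\phi$-range lies inside $[0, \pi/2]$, where the first Dirichlet eigenvalue of $-d^2/d\phi^2$ equals $4 > 1$, Wirtinger's inequality shows that $B$ is positive-definite on the relevant subspace of $H^1$. Cauchy--Schwarz then gives $A_{\gamma_0}(\gamma)^2 \le B(h_\gamma, h_\gamma) \cdot B(h_{\gamma_0}, h_{\gamma_0}) = 4 A(\gamma) A(\gamma_0)$, with equality iff $h_\gamma \propto h_{\gamma_0}$, equivalent to $\gamma = c \gamma_0$.

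The main obstacle lies in the careful bookkeeping of boundary contributions in the integration by parts and of the regions where the $\phi$-ranges of $\gamma$ and $\gamma_0$ do not coincide (in which $\min_{q \in \gamma_0} q \cdot \nu(\phi)$ is attained at an endpoint of $\gamma_0$, giving a piecewise expression for $h^-_{\gamma_0}$ rather than the smooth $h_{\gamma_0}$). These issues can be resolved either by a normalization making the tangent lines at the endpoints of $\gamma, \gamma_0$ pass through the origin (so $h$ vanishes at $\phi = 0, \pi/2$, enabling a clean application of Wirtinger on $H^1_0([0, \pi/2])$), or by extending the bilinear form $B$ to absorb the boundary contributions and working on a larger function space. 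Although the underlying Cauchy--Schwarz mechanism is robust, these technicalities are of the same flavor as in \cite{isoperimetric} for the convex case and require care.
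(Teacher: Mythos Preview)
Your approach is genuinely different from the paper's, and it is worth contrasting them.

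The paper does \emph{not} work with support functions or Cauchy--Schwarz at all; it reduces directly to the convex case (Lemma \ref{181222_2}) by a reflection trick. After scaling so that $A(\gamma)=A(\gamma_0)$ and the four axis-intercepts lie in $(0,1)$, the paper sets
\[
\bar{\gamma}:=\{(x,y)\mid (1-x,1-y)\in \gamma\cup(a,1]\times\{0\}\cup\{0\}\times(b,1]\},
\]
and likewise for $\bar{\gamma}_0$. These are complete convex curves with $A(\bar\gamma)=A(\bar\gamma_0)=1-A(\gamma)$ and the key identity $A_{\bar\gamma_0}(\bar\gamma)+A_{\gamma_0}(\gamma)=2$. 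One application of Lemma \ref{181222_2} to $\bar\gamma,\bar\gamma_0$ then finishes the proof in one line. So the ``complement'' idea you dismissed in your first paragraph is, in a refined form, exactly what the paper does: the point is not to take the complement of $K_\gamma$ in its own bounding rectangle (which, as you correctly computed, gives a strictly weaker bound), but to reflect through the center of a \emph{common} square $[0,1]^2$ after the equal-area normalization, and to attach the axis segments so that the resulting curves are complete. That extra structure is what makes the identity $A_{\bar\gamma_0}(\bar\gamma)+A_{\gamma_0}(\gamma)=2$ hold and the reduction sharp.

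Your direct route via the bilinear form $B(f,g)=\int(f'g'-fg)\,d\phi$ and Wirtinger on $[0,\pi/2]$ is a nice observation --- the sign flip relative to the Lorentzian form in the convex Minkowski inequality is exactly what one wants for the reversed inequality here. But the ``main obstacle'' you flag at the end is a real gap, not just bookkeeping. The normalization you propose, making the tangent lines at the endpoints pass through the origin, forces $p\cdot\nu(p)=0$ at those endpoints, contradicting the standing hypothesis $p\cdot\nu(p)>0$; and even as a degenerate limit it is a constraint one cannot impose on a generic concave curve. Without it, $h_\gamma$ and $h_{\gamma_0}$ are not in $H^1_0$, the integration by parts produces genuine boundary terms, and the Wirtinger positivity argument does not apply on the relevant function space. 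The alternative you mention (absorbing boundary contributions into an extended $B$) is plausible but would need a concrete construction; as written, the proposal does not establish the inequality. The paper's reflection argument sidesteps all of this in two lines.
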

\begin{proof}
After scaling, we may assume that 
$A(\gamma) = A(\gamma_0)$, and there holds
\begin{align*} 
&\gamma \cap (\R_{>0} \times \{ 0 \} ) = \{  (a, 0) \}, \quad
\gamma_0 \cap (\R_{>0} \times \{ 0 \} ) = \{ (a_0, 0) \}, \\
&\gamma \cap  ( \{ 0 \} \times \R_{>0})   = \{ (0, b)\}, \quad
\gamma_0 \cap ( \{ 0 \} \times \R_{>0}) = \{ (0, b_0) \} 
\end{align*} 
where $a, a_0, b, b_0 \in  (0, 1)$. 
Let us define continuous convex curves 
\begin{align*} 
\bar{\gamma}&:=  \{ (x,y) \mid (1-x, 1-y)  \in \gamma \cup (a,1] \times \{ 0 \}  \cup \{ 0 \} \times  (b, 1] \}, \\
\bar{\gamma}_0&:= \{ (x,y) \mid (1-x, 1-y) \in \gamma_0 \cup (a_0, 1] \times \{ 0 \} \cup \{ 0 \}  \times (b_0, 1] \}. 
\end{align*} 
Then $A(\bar{\gamma}) = A(\bar{\gamma}_0) = 1 - A(\gamma)$, and 
$A_{\bar{\gamma}_0}(\bar{\gamma}) + A_{\gamma_0}(\gamma) = 2$. 
Then Lemma \ref{181222_2} implies 
\[ 
A_{\gamma_0}(\gamma) = 2 - A_{\bar{\gamma}_0}(\bar{\gamma}) \le 2  - A_{\bar{\gamma}_0}(\bar{\gamma}_0) = A_{\gamma_0}(\gamma_0). 
\] 
The equality holds if and only if $\bar{\gamma}_0 = \bar{\gamma}$, which is equivalent to $\gamma_0 = \gamma$. 
\end{proof} 

\subsection{Proof of Proposition \ref{190116_1}}

In this section we verify (\ref{181220_1})
when $\gamma$ is a complete star-shaped curve which is nice and either strictly convex or concave. 
Throughout this section we assume $A(\gamma)=1$, without loss of generality. 

\subsubsection{When $\gamma$ is nice and strictly convex} 

For any convex integral path $\Lambda$ (see \cite{CCG} Definition A.2, where it is called convex lattice path), 
let $L(\Lambda)$ denote the number of lattice points on the region bounded by $\Lambda$ and the $x$ and $y$ axes. 
By \cite{CCG} (see also \cite{Beyond}), 
for every $k \in \Z_{>0}$ there holds 
\begin{equation}\label{190122_1} 
c_{\sigma_k}(Y_\gamma, \lambda_\gamma) = 
\min \{ A_\gamma(\Lambda) \mid L(\Lambda) = k \}. 
\end{equation} 

For any convex integral path $\Lambda$, 
we say that $e \subset \Lambda$ is an $\textit{edge}$ of $\Lambda$
if $e$ is a line segment of positive length, 
the boundary points of $e$ are lattice points, 
and there exists no other lattice point on $e$. 
Let $E(\Lambda)$ denote the set of all edges of $\Lambda$. 
For any $e \in E(\Lambda)$, let $\nu(e)$ denote the unit vector which is normal to $e$ and points outwards. 

For each $k \in \Z_{>0}$, 
there exists a convex integral path $\Lambda_k$ 
which satisfies the following conditions: 
\begin{itemize}
\item[(i):] $L(\Lambda_k)  \ge k$. 
\item[(ii):] $A_\gamma(\Lambda_k) = c_{\sigma_k}(Y_\gamma, \lambda_\gamma)$. 
\item[(iii):] Every edge $e$ of $\Lambda_k$ satisfies $\nu(e) \in \{ (\cos \theta, \sin \theta) \mid \ep - \pi/2 \le \theta \le \pi - \ep \}$
, where $\ep$ is a positive constant which depends only on $\gamma$. 
\end{itemize} 
We fix a sequence $(\Lambda_k)_k$ which satisfies this condition until the end of the proof. 
The next lemma is the key observation in the proof. 

\begin{lem}\label{190228_1} 
$\lim_{k \to \infty} \frac{\Lambda_k}{\sqrt{k}} = \gamma$ with respect to the Hausdorff distance.
\end{lem}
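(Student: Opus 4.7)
The plan is to combine the volume theorem, the isoperimetric inequality (Lemma \ref{181222_2}), and a Blaschke-type compactness argument. First, I would complete $\Lambda_k$ to a closed convex curve $\Lambda_k^+$ by joining its two endpoints to the origin along the coordinate axes. Along these axis segments $\nu$ points in a negative coordinate direction, so $\max_{q \in \gamma} q \cdot \nu(p) = 0$ there, and hence $A_\gamma(\Lambda_k^+) = A_\gamma(\Lambda_k) = c_{\sigma_k}(Y_\gamma, \lambda_\gamma)$ by (ii). Using $\vol(Y_\gamma, \lambda_\gamma) = 2 A(\gamma) = 2$ together with the volume theorem,
\begin{equation*}
\lim_{k \to \infty} \frac{A_\gamma(\Lambda_k^+)}{\sqrt{k}} = \sqrt{2 \vol(Y_\gamma, \lambda_\gamma)} = 2 = \frac{A_\gamma(\gamma)}{\sqrt{A(\gamma)}}.
\end{equation*}

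Next I would pin down $A(\Lambda_k^+)$. The isoperimetric inequality (Lemma \ref{181222_2}) applied with $\gamma_0 = \gamma$ gives $A_\gamma(\Lambda_k^+) \ge 2 \sqrt{A(\Lambda_k^+)}$, so $A(\Lambda_k^+) \le A_\gamma(\Lambda_k^+)^2/4 = k + o(k)$. For the matching lower bound, condition (iii) forces the normals of all edges of $\Lambda_k$ into a sector bounded away from the two negative coordinate directions; this implies that the weight $\max_{q \in \gamma} q \cdot \nu(e)$ is bounded below by a positive constant depending only on $\gamma$, so the Euclidean perimeter of $\Lambda_k$, and hence the number of boundary lattice points $B(\Lambda_k)$, is $O(\sqrt{k})$. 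Pick's theorem then gives $A(\Lambda_k^+) = L(\Lambda_k) - B(\Lambda_k)/2 - 1 \ge k - O(\sqrt{k})$ from (i). Thus $A(\Lambda_k^+)/k \to 1$.

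Now rescale: set $\tilde\Lambda_k := \Lambda_k^+ / \sqrt{k}$. Condition (iii) together with the bound $A_\gamma(\Lambda_k) = O(\sqrt{k})$ confines $\tilde\Lambda_k$ to a fixed compact region of the closed first quadrant, and each $\tilde\Lambda_k$ is a complete convex closed curve (in the sense of Section 6.2). By Blaschke's selection principle, every subsequence of $(\tilde\Lambda_k)$ has a further subsequence converging in the Hausdorff distance to some continuous complete convex curve $\gamma_\infty$. By Lemma \ref{181222_1} and continuity of the area functional,
\begin{equation*}
A(\gamma_\infty) = \lim_k A(\tilde\Lambda_k) = 1 = A(\gamma), \qquad A_\gamma(\gamma_\infty) = \lim_k A_\gamma(\tilde\Lambda_k) = 2 = A_\gamma(\gamma).
\end{equation*}
Hence equality is attained in Lemma \ref{181222_2}, which forces $\gamma_\infty = c\gamma$, and $A(\gamma_\infty) = A(\gamma)$ gives $c = 1$. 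Since every subsequence has a further subsequence converging to $\gamma$, the full sequence $\tilde\Lambda_k$ converges to $\gamma$, which is the desired conclusion.

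The main obstacle I expect is ensuring that the Hausdorff limit $\gamma_\infty$ is a \emph{complete} continuous convex curve with $A_\gamma(\gamma_\infty) = A_\gamma(\gamma)$: one must rule out the possibility that mass of $\tilde\Lambda_k$ concentrates on the axes in the limit, losing the curved part. This is precisely where condition (iii) is used crucially: it bounds the fraction of the $A_\gamma$-weight carried by near-horizontal or near-vertical edges, so no such collapse can occur. Checking this, and verifying Hausdorff continuity of $A_\gamma$ carefully in the presence of the boundary (Lemma \ref{181222_1}), is the only delicate point; the rest of the argument is a direct chain of inequalities closed up by the equality case of the isoperimetric inequality.
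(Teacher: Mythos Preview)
Your argument is correct but takes a genuinely different route from the paper's. You feed in the volume theorem to obtain the exact asymptotic $A_\gamma(\Lambda_k)/\sqrt{k}\to 2$, pin down $A(\Lambda_k)/k\to 1$ via the isoperimetric inequality (upper bound) together with Pick's theorem and the perimeter estimate from (iii) (lower bound), and then invoke the \emph{equality case} of Lemma~\ref{181222_2} to identify every subsequential Hausdorff limit with $\gamma$. The paper instead argues by contradiction using only the combinatorial minimality formula~\eqref{190122_1}: if a subsequence $\Lambda_{k_j}/\sqrt{k_j}$ converged to some $\gamma'\ne\gamma$, then $L(\Lambda_{k_j})\ge k_j$ forces $A(\gamma')\ge 1$, whence the strict isoperimetric inequality gives $A_\gamma(\gamma')>A_\gamma(\gamma)$; one then exhibits competing paths $\Lambda'_j$ approximating $\gamma$ with $L(\Lambda'_j)=k_j$ and strictly smaller $A_\gamma$, contradicting~\eqref{190122_1}. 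Thus the paper never inputs the precise constant from the volume theorem (only the soft bound $c_{\sigma_k}=O(\sqrt{k})$, itself derivable from~\eqref{190122_1}); in effect it \emph{recovers} the volume formula in this toric setting as a by-product. Your route is more direct and avoids constructing the competitor sequence, at the cost of importing the deep general result of~\cite{CGHR}. A cosmetic remark: your closure $\Lambda_k^+$ is unnecessary, since a convex integral path already runs from axis to axis and is itself a complete convex curve in the sense of Section~6.2, so Lemma~\ref{181222_2} applies to $\Lambda_k$ directly. The technical point you flag about the Hausdorff limit being a genuine complete curve is equally present (and equally tacit) in the paper's argument.
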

\begin{proof} 
We first prove $\sup_k \diam(\Lambda_k/ \sqrt{k}) < \infty$. 
If this is not true, there exists an increasing sequence of positive integers, 
which we denote by $(k_j)_j$, such that 
$\lim_{j \to \infty} \diam(\Lambda_{k_j}/\sqrt{k_j}) = \infty$. 
Then
$\lim_{j \to \infty} A_\gamma(\Lambda_{k_j})/ \sqrt{k_j} = \infty$, 
contradicting the fact that 
$c_{\sigma_k}(Y_\gamma, \lambda_\gamma)$ 
is of order $\sqrt{k}$. 

If the lemma does not hold, then 
there exists an increasing sequence of positive integers, 
which we denote by $(k_j)_j$, and a continuous convex curve $\gamma' \ne \gamma$
such that 
$\lim_{j \to \infty} \Lambda_{k_j}/ \sqrt{k_j} = \gamma'$. 
By $L(\Lambda_{k_j}) \ge k_j$ for every $j$, 
we obtain 
$A(\gamma') \ge 1 = A(\gamma)$. 

By $\gamma' \ne \gamma$ and $A(\gamma') \ge A(\gamma)$, 
Lemma \ref{181222_2}
implies 
$A_\gamma(\gamma') > A_\gamma(\gamma)$. 
On the other hand, 
there exists a sequence $(\Lambda'_j)_j$ of convex integral paths 
such that the following conditions hold:
\begin{itemize} 
\item 
$L(\Lambda'_j)= k_j$ for every $j$. 
\item 
$\lim_{j \to \infty} \Lambda'_j/\sqrt{k_j} = \gamma$ with respect to the Hausdorff distance. 
\end{itemize} 
Then there holds 
\[ 
\lim_{j \to \infty} A_\gamma(\Lambda'_j)/\sqrt{k_j} = A_\gamma(\gamma)  < A_\gamma(\gamma') = \lim_{j \to \infty} A_\gamma(\Lambda_{k_j})/\sqrt{k_j}. 
\] 
Hence $A_\gamma(\Lambda'_j) < A_\gamma(\Lambda_{k_j})  = c_{\sigma_{k_j}}(Y_\gamma, \lambda_\gamma)$ for sufficiently large $j$,
which contradicts (\ref{190122_1}). 
\end{proof} 

Since $\gamma$ is strictly convex, 
for every $e \in E(\Lambda_k)$,
there exists unique $p(e) \in \gamma$ such that 
\[ 
\nu(e) \cdot p(e) = \max_{p \in \gamma} \nu(e) \cdot p. 
\] 
Let us define $m(e) \in \Z_{>0}$ as follows: 
\begin{itemize} 
\item If  $p(e) \notin \{ (\rho(0), 0), ( 0, \rho(\pi/2)) \}$, then $m(e)=1$. 
\item If $p(e) = (\rho(0), 0)$, then $m(e)$ is the absolute value of the $y$ -component of $e$. 
\item If $p(e) = (0, \rho(\pi/2))$, then $m(e)$ is the absolute value of the $x$ -component of $e$. 
\end{itemize} 

Then we obtain 
\[
A_\gamma(\Lambda_k) = \sum_{e \in E(\Lambda_k)}  m(e) w(p(e)).
\]
Since $\gamma$ is nice, 
Lemma \ref{190119_1} implies 
\[ 
\mathfrak{D}_k = \sum_{e \in E(\Lambda_k)}   m(e) w(p(e))  \delta_{p(e)}. 
\] 
By Remark \ref{190123_1},
to verify (\ref{181220_1}) 
 it is sufficient to prove 
\[ 
\lim_{k \to \infty} \frac{\mathfrak{D}_k(\gamma')}{\sqrt{2k}}  = \frac{\mathfrak{D}_\vol(\gamma')}{\sqrt{\vol (Y_\gamma, \lambda_\gamma)}}
\] 
for every (not necessarily complete) star-shaped curve $\gamma' \subset \gamma$. 
For each $k$, 
\[ 
\Lambda'_k:= \bigcup_{\substack{ e \in E(\Lambda_k) \\ p(e) \in \gamma'}}  e
\] 
is a continuous convex curve which satisfies $\mathfrak{D}_k(\gamma') = A_\gamma(\Lambda'_k)$. 
The next lemma easily follows from Lemma \ref{190228_1} and the assumption that $\gamma$ is strictly convex: 

\begin{lem}\label{190122_2} 
$\lim_{k \to \infty} \Lambda'_k / \sqrt{k} = \gamma'$
with respect to the Hausdorff distance. 
\end{lem}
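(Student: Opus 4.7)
The plan is to identify $\Lambda'_k$ as a connected sub-path of $\Lambda_k$ cut out by a prescribed range of outward-normal directions, and then to deduce its Hausdorff convergence from Lemma \ref{190228_1} by controlling the endpoints using strict convexity of $\gamma$.

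First, since $\gamma$ is $C^\infty$ and strictly convex, the Gauss map $\nu:\gamma\to S^1$, $p\mapsto \nu(p)$, is a homeomorphism onto a closed arc $J\subset S^1$. Writing $\gamma' = \gamma([\alpha,\beta])$ with $0\le \alpha\le \beta\le \pi/2$, set $J':=\nu(\gamma')\subset J$. For any edge $e\in E(\Lambda_k)$ whose normal $\nu(e)$ lies in the interior of $J$, the support point $p(e)$ is, by strict convexity, the unique point of $\gamma$ with outward normal equal to $\nu(e)$, so $p(e)\in\gamma'$ iff $\nu(e)\in J'$. Because the edges of a convex integral path occur in monotone order of outward-normal angle as one traverses the path, the edges with $\nu(e)\in J'$ form a contiguous block, so $\Lambda'_k$ is a connected convex sub-path of $\Lambda_k$ with two endpoints $P_k,Q_k$ (possibly coinciding).

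Next, I claim $P_k/\sqrt{k}\to \gamma(\alpha)$ and $Q_k/\sqrt{k}\to \gamma(\beta)$ in $\R^2$. By Lemma \ref{190228_1} the sequences $\{P_k/\sqrt{k}\}$ and $\{Q_k/\sqrt{k}\}$ are bounded and all their accumulation points lie on $\gamma$. By the contiguous-block description, $P_k$ is the vertex of $\Lambda_k$ separating the edges with $\nu(e)\notin J'$ (on the $\alpha$-side) from the edges with $\nu(e)\in J'$; hence any accumulation point $p_\infty\in\gamma$ of $P_k/\sqrt{k}$ is a point where the outward-normal directions of $\Lambda_k/\sqrt{k}$ transition across the endpoint $\nu(\gamma(\alpha))$ of $J'$. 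Since $\nu$ is injective on $\gamma$ by strict convexity, this forces $p_\infty = \gamma(\alpha)$; an identical argument gives $Q_k/\sqrt{k}\to \gamma(\beta)$.

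Finally, since $\Lambda'_k/\sqrt{k}\subset \Lambda_k/\sqrt{k}\to \gamma$ in Hausdorff distance, every Hausdorff accumulation of $\Lambda'_k/\sqrt{k}$ is a connected convex sub-arc of $\gamma$; by the endpoint convergence above, it must be the sub-arc with endpoints $\gamma(\alpha)$ and $\gamma(\beta)$, namely $\gamma'$ itself. Thus $\Lambda'_k/\sqrt{k}\to \gamma'$. The main delicate point is the second step: making precise that the transition vertex $P_k$, which separates two ranges of normal directions along $\Lambda_k$, actually converges to the transition point $\gamma(\alpha)$ on $\gamma$. Strict convexity is essential here, since otherwise multiple points of $\gamma$ could share an outward normal and $P_k/\sqrt{k}$ might accumulate along a whole flat segment rather than at a single point.
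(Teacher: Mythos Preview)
Your proposal is correct and follows exactly the route the paper indicates: the paper gives no detailed proof but simply states that the lemma ``easily follows from Lemma \ref{190228_1} and the assumption that $\gamma$ is strictly convex,'' which is precisely your strategy of cutting $\Lambda_k$ by a range of outward normals and controlling endpoints via strict convexity. The endpoint step you flag as delicate can be made rigorous by observing that Hausdorff convergence $\Lambda_k/\sqrt{k}\to\gamma$ to a strictly convex curve implies convergence of the support point in each fixed direction $\nu(\gamma(\alpha))$, which pins down $P_k/\sqrt{k}\to\gamma(\alpha)$.
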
 

Now one can complete the proof by 
\[
\lim_{k \to \infty} \frac{\mathfrak{D}_k(\gamma')}{\sqrt{2k}} 
= 
\lim_{k \to \infty} \frac{A_\gamma(\Lambda'_k)}{\sqrt{2k}} 
= 
\frac{A_\gamma(\gamma')}{\sqrt{2}}
= 
\frac{\mathfrak{D}_\vol(\gamma')}{\sqrt{\vol(Y_\gamma, \lambda_\gamma)}}. 
\] 
The second equality follows from Lemma \ref{190122_2} and continuity of $A_\gamma$ 
with respect to the Hausdorff distance. 
The last equality holds since 
$A_\gamma(\gamma') = \mathfrak{D}_\vol(\gamma')$ 
can be checked by direct computations, and 
$\vol(Y_\gamma, \lambda_\gamma) = \mathfrak{D}_\vol(\gamma) = A_\gamma(\gamma) = 2 A(\gamma) = 2$. 

\subsubsection{When $\gamma$ is nice and strictly concave} 

Let us sketch the proof of (\ref{181220_1})
when $\gamma$ is nice and strictly concave. 
For any concave integral path $\Lambda$ (see Definition 1.18 in \cite{CCGetal}), 
let $L'(\Lambda)$ denote the number of lattice points on the region bounded by $\Lambda$ and the $x$ and $y$ axes, 
not including lattice points on $\Lambda$. 
There holds 
\begin{equation}\label{190307_1} 
c_{\sigma_k}(Y_\gamma, \lambda_\gamma) = \max \{ A_\gamma(\Lambda) \mid L'(\Lambda) = k-1 \} 
\end{equation}
for any $k \in \Z_{>0}$, by \cite{CCGetal} Theorem 1.21. 

Then there exists a sequence 
$(\Lambda_k)_k$ of concave integral paths satisfying the following conditions for every $k$: 
\begin{itemize} 
\item[(i):] $L'(\Lambda_k) \le k-1$. 
\item[(ii):] $c_{\sigma_k}(Y_\gamma, \lambda_\gamma) = A_\gamma(\Lambda_k)$. 
\item[(iii):] For every edge $e$ of $\Lambda_k$, 
\[
\nu(e) \in \{ ( \cos \theta, \sin \theta) \mid \ep \le \theta \le \pi/2- \ep\}, 
\] 
where $\ep$ is a positive constant which depends only on $\gamma$. 
\end{itemize} 

One can prove $\sup_k \diam (\Lambda_k /\sqrt{k} ) < \infty$ from assumptions (ii), (iii) and $c_{\sigma_k}(Y_\gamma, \lambda_\gamma)$ is of order $\sqrt{k}$. 
Now let us prove $\lim_{k \to \infty} \Lambda_k/ \sqrt{k} = \gamma$. 
If this is not the case, 
there exist an increasing sequence of positive integers $(k_j)_j$
and a continuous concave curve $\gamma' \ne \gamma$ 
such that $\lim_{j \to \infty} \Lambda_{k_j}/\sqrt{k_j} = \gamma'$. 
By $L'(\Lambda_{k_j}) \le k_j -1$ for every $j$, 
we obtain $A(\gamma') \le 1 = A(\gamma)$. 
Since $\gamma' \ne \gamma$ and $A(\gamma') \le A(\gamma)$, 
Lemma \ref{190123_2} implies 
$A_\gamma(\gamma') < A_\gamma(\gamma)$. 
On the other hand, 
there exists a sequence $(\Lambda'_j)_j$ 
of concave integral paths 
such that the following conditions hold: 
\begin{itemize} 
\item 
$L'(\Lambda'_j) = k_j - 1$ for every $j$. 
\item
$\lim_{j \to \infty} \Lambda'_j  = \gamma$, thus 
$\lim_{j \to \infty} A_\gamma(\Lambda'_j) /\sqrt{k_j} = A_\gamma(\gamma)$. 
\end{itemize} 
Then $A_\gamma(\Lambda'_j) > A_\gamma(\Lambda_{k_j}) = c_{\sigma_{k_j}}(Y_\gamma, \lambda_\gamma)$
for sufficiently large $j$, which contradicts (\ref{190307_1}). 
Thus we have shown 
$\lim_{k \to \infty} \Lambda_k/\sqrt{k} = \gamma$. 
The rest of the proof is similar to the convex case, 
and details are omitted.

\end{document}